\numberwithin{equation}{section}
\newtheorem{thm}{Theorem}[section]
\newtheorem{prop}[thm]{Proposition}
\newtheorem{lem}[thm]{Lemma}
\newtheorem{cor}[thm]{Corollary}
\theoremstyle{definition}
\newtheorem{defn}[thm]{Definition}
\newtheorem{assump}[thm]{Assumption}
\theoremstyle{remark}
\newtheorem{rem}[thm]{Remark}
\newtheorem*{claim}{Claim}
\DeclareMathOperator{\grk}{grk}
\DeclareMathOperator{\Hom}{Hom}
\DeclareMathOperator{\End}{End}
\DeclareMathOperator{\Ima}{Im}
\DeclareMathOperator{\supp}{supp}
\DeclareMathOperator{\Kar}{Kar}
\newcommand*{\Id}{\mathrm{Id}}
\newcommand*{\rex}{\mathrm{rex}}
\DeclareMathOperator{\ch}{ch}
\newcommand*{\ModCat}[1]{#1\textrm{\normalfont -Mod}}
\newcommand{\Z}{\mathbb{Z}}
\newcommand{\Coef}{\mathbb{K}}
\newcommand{\BSbimod}{\mathcal{BS}}
\newcommand{\Sbimod}{\mathcal{S}\mathrm{Bimod}}
\newcommand{\LL}{\mathit{LL}}
\newcommand{\DLL}{\mathbb{LL}}
\DeclareMathOperator{\Gr}{Gr}
\DeclareMathOperator{\Frac}{Frac}
\title{On Soergel bimodules}
\author{Noriyuki Abe}
\address{Graduate School of Mathematical Sciences, the University of Tokyo, 3-8-1 Komaba, Meguro-ku, Tokyo 153-8914, Japan.}
\email{abenori@ms.u-tokyo.ac.jp}
\subjclass[2010]{20F55}
\begin{document}
\begin{abstract}
For a Coxeter system and a representation $V$ of this Coxeter system, Soergel defined a category which is now called the category of Soergel bimodules and proved that this gives a categorification of the Hecke algebra when $V$ is reflection faithful.
Elias and Williamson defined another category even when $V$ is not reflection faithful and they proved that this category is equivalent to the category of Soergel bimodules when $V$ is reflection faithful.
Moreover they proved the categorification theorem for their category with less assumptions on $V$.
In this paper, we give a ``bimodule theoretic'' definition of the category of Elias-Williamson and reprove the categorification theorem.
\end{abstract}
\maketitle

\section{Introduction}
In \cite{MR1029692}, Soergel gave a combinatorial description of the category $\mathcal{O}$ for semisimple Lie algebras.
This celebrated work has many applications, another proof of Kazhdan-Lusztig conjecture (which do not rely on the theory of $\mathcal{D}$-modules), Koszul duality of the category $\mathcal{O}$, etc.
Later, Soergel \cite{MR2329762} defined a certain category purely in terms of combinatorics of Coxeter systems, without any representation theory.
This category describes the category $\mathcal{O}$.
More precisely, this category is equivalent to the category of projective modules in (to be precisely, deformed version of) the principal block of the category $\mathcal{O}$ (cf.\ \cite{MR1029692,MR2205072}).

Let $(W,S)$ be a Coxeter system and $V$ \emph{reflection faithful} representation of $W$.
Then Soergel attached the category of Soergel bimodules.
When $W$ is the Weyl group of a semisimple Lie algebra, we may take $V$ to be a Cartan subalgebra and Soergel's category is the category which describes the category $\mathcal{O}$.

Fiebig used this category (or, more precisely, the category of sheaves on moment graphs, which is equivalent to the category of Soergel bimodules \cite{MR2395170}) to give an alternative proof of Lusztig conjecture which says that the irreducible characters of an algebraic group over a positive characteristic is given by affine Kazhdan-Lusztig polynomials if the characteristic is large enough.
He used the Soergel bimodules attached to an affine Weyl group $(W,S)$ and a Cartan subalgebra $V$ of corresponding affine Lie algebra.
If the coefficient field of $V$ has the characteristic zero, then $V$ is reflection faithful.
However if the characteristic is positive, then this is not reflection faithful.
So he used a lifting to characteristic zero and used the theory of Soergel bimodules over characteristic zero field.

However, of course, it is more natural to use positive characteristic objects directly.
Elias and Williamson gave an alternative category of the category of Soergel bimodules which works well even with non-reflection faithful representation \cite{MR3555156}.
Riche and Williamson \cite{MR3805034} gave a conjecture which claims that this category describes the category of algebraic representations of an algebraic group over any field of positive characteristic.
As an application, this description gives a character formula of tilting modules in terms of $p$-canonical basis defined using the category of Elias-Williamson.
Recently this character formula was proved by Achar-Makisumi-Riche-Williamson \cite{MR3868004} for the principal block when $p$ is greater than the Coxeter number.

Elias and Williamson gave the definition of their category as a certain diagrammatic category with  generators and (very complicated) relations.
Such generators and relations appeared in a study of Soergel bimodules, however the definition seems completely different from the original definition of Soergel bimodules.
The aim of this paper is to give a ``bimodule theoretic'' definition of this category, namely, we define a new category and prove that the new category is equivalent to that of Elias-Williamson.

To say more precisely, we first recall the category of Soergel bimodules.
Let $R = S(V)$ be the symmetric algebra of $V$ and for $s\in S$, let $R^s$ be the subalgebra of $s$-invariants.
Let $(n)$ be a shift of grading defined by $M(n)^i = M^{i + n}$ for $n\in\Z$ where $M = \bigoplus_i M^i$ is a graded module.
Then a Soergel bimodule is a graded $R$-bimodule which is a direct summand of a direct sum of modules of a form
\begin{equation}\label{eq:BS module, in Introduction}
R\otimes_{R^{s_1}}R\otimes_{R^{s_2}}\cdots\otimes_{R^{s_l}}R(n)
\end{equation}
for $s_1,\dots,s_l\in S$ and $n\in\Z$.
Let $\Sbimod$ be the category of Soergel bimodules and $[\Sbimod]$ the spliit Grothendieck group of $\Sbimod$.
Then Soergel proved the following which we call Soergel's categorification theorem.
\begin{enumerate}
\item For each $w\in W$, there exists a unique indecomposable module $B(w)\in \Sbimod$ which satisfies the following.
\begin{enumerate}
\item For a reduced expression $w = s_1\dotsm s_l$, $B(w)$ appears as a direct summand of $R\otimes_{R^{s_1}}\cdots\otimes_{R^{s_l}}R(l)$ with multiplicity one and the module $R\otimes_{R^{s_1}}\cdots\otimes_{R^{s_l}}R(l)$ is a direct sum of $B(v)(l)$ where $v < w$ and $l\in\Z$.
\item Any object in $\Sbimod$ is a direct sum of modules $B(w)(n)$ ($w\in W$, $n\in\Z$).
\end{enumerate}
\item The algebra $[\Sbimod]$ is isomorphic to the Hecke algebra attached to $(W,S)$.
\end{enumerate}
We will extend this theorem.

\subsection{A representation $V$ and our category}
Let $V$ be a finite-dimensional representation of $W$ over a field $\Coef$.
In the Introduction, we assume the following.
There exist $\alpha_s\in V$ and $\alpha_s^\vee\in V^*$ (where $V^*$ is the dual of $V$) for each $s\in S$ such that 
\begin{enumerate}
\item $\langle \alpha^\vee_s,\alpha_s\rangle = 2$ for any $s\in S$.
\item $s(v) = v - \langle \alpha_s^\vee,v\rangle \alpha_s$ for any $s\in S$ and $v\in V$.
\item $\alpha_s\ne 0$ and $\alpha_s^\vee\ne 0$.
\item For each $s,t\in S$ such that the order of $st$ is finite, the representation of $V$ restricted to the group generated by $\{s,t\}$ is reflection faithful.
\end{enumerate}
Under these assumptions, we consider the following category.
Let $Q$ be the fraction field of $R$.
Our category consists of $M$ which satisfy
\begin{quote}
$M$ is a graded $R$-bimodule with a decomposition $M\otimes_R Q = \bigoplus_{w\in W}M_Q^w$ such that $f\in R$ and $m\in M_Q^w$ we have $mf = w(f)m$.
\end{quote}
The morphisms we consider are degree zero homomorphisms as $R$-bimodules which upon tensoring with $Q$ preserve the given decompositions.
It is easy to see that the $R$-bimodule \eqref{eq:BS module, in Introduction} naturally has such a decomposition and we say $M$ is a Soergel bimodule if it appears as a direct summand of a sum of modules of this type.
The main theorem of this paper is the following.
\begin{thm}
\begin{enumerate}
\item For this category we have Soergel's categorification theorem.
\item This category is equivalent to the category of Elias-Williamson.
\end{enumerate}
\end{thm}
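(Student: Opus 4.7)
The plan is to follow Soergel's original argument line by line, replacing every appeal to reflection faithfulness of $V$ by the $W$-graded decomposition of $M\otimes_R Q$ that is now built into the data of an object. Once the analogue of Soergel's hom formula is established, part~(1) follows by the standard induction, and part~(2) is obtained by constructing a functor from the Elias--Williamson category to the new category, sending diagrammatic generators to explicit bimodule morphisms, and comparing Hom spaces between Bott--Samelson objects.

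First I would set up the basic machinery. For $M$ in the category, the decomposition of $M\otimes_R Q$ gives a support $\supp(M)\subseteq W$ and, for each Bruhat-closed subset $A\subseteq W$, an $R$-subbimodule $\Gamma_A M\subseteq M$ of elements whose image in $M\otimes_R Q$ lies in $\bigoplus_{w\in A}M_Q^w$. I would check that on the Bott--Samelson modules \eqref{eq:BS module, in Introduction} the subquotients $\Gamma_{\le w}M/\Gamma_{<w}M$ are free of finite rank as left $R$-modules, which yields a character map $\ch\colon M\mapsto\sum_w(\textrm{graded rank of the subquotient})T_w$ to the Hecke algebra. The behaviour of $B_s\otimes_R -$ under $\ch$ is left multiplication by the Kazhdan--Lusztig generator $b_s$, the key input being the $s$-eigenspace decomposition $R = R^s\oplus R^s\alpha_s$, which uses only $\langle\alpha_s^\vee,\alpha_s\rangle=2$ and $\alpha_s\neq 0$.

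Next I would prove the Soergel hom formula for $\Sbimod$: for a Bott--Samelson $B$ and any $M$, the graded module $\Hom(B,M)$ is free over $R$ of graded rank equal to the Hecke-algebra pairing $(\ch(B),\ch(M))$. The proof proceeds by induction on the length of the expression defining $B$, using the adjunction of $B_s\otimes_R -$ with itself and the standard filtration from the previous step. Assumption~(4) enters here through the rank-two computations, where the only nontrivial input is the dihedral braid behaviour, which is precisely guaranteed on each rank-two parabolic by the reflection-faithful hypothesis. Granted the hom formula, the classification of indecomposable objects $B(w)$ and the identification of $[\Sbimod]$ with the Hecke algebra follow formally as in Soergel's original argument.

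For part~(2), I would define a functor from the Elias--Williamson category by sending the coloured object $s$ to $B_s = R\otimes_{R^s}R(1)$, the polynomial boxes to multiplication by elements of $R$, and the dot, trivalent, and $2m_{st}$-valent generators to the bimodule morphisms originally written down by Elias--Williamson in the reflection faithful case. Verifying the Elias--Williamson relations is the main obstacle: the one-colour and Frobenius relations depend only on $R\supseteq R^s$ being a Frobenius extension, which follows from assumptions~(1)--(3), whereas the two-colour Zamolodchikov-type relations for finite $m_{st}$ are reduced to the rank-two reflection-faithful case permitted by assumption~(4), so that Elias--Williamson's verification can be transferred to the subrepresentation spanned by $\alpha_s$ and $\alpha_t$. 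Essential surjectivity is clear because Bott--Samelson objects generate both categories; fully faithfulness reduces, via Elias--Williamson's double-leaves basis together with the hom formula established above, to checking that the functor sends a spanning set to a spanning set of the same graded rank, which is a finite linear-algebra computation whose nondegeneracy follows from the explicit form of the top double leaf.
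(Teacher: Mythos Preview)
Your plan is to run Soergel's original argument line by line, but the paper is explicit (see the subsection ``Proof'' in the Introduction) that this cannot be carried out under the present hypotheses on $V$; two steps of your outline are the ones that break. First, you assert that you would ``check that on the Bott--Samelson modules the subquotients $\Gamma_{\le w}M/\Gamma_{<w}M$ are free of finite rank'', without indicating a mechanism. In Soergel's proof this freeness comes from localizing at height-one primes and decomposing into rank-one pieces, which needs distinct reflections to give non-proportional roots; the hypotheses here do not imply the GKM condition, so that decomposition is unavailable. Second, your inductive proof of the hom formula via the self-adjunction of $B_s$ and ``the standard filtration from the previous step'' is Soergel's $\mathcal{F}_\Delta$ argument: it requires that objects with a standard flag be stable under $B_s\otimes(-)$, which Soergel proves by controlling $\mathrm{Ext}^1$ between standards. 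The paper observes that in this generality there are more extensions between the $R_x$, so the category of $\Delta$-filtered objects does not behave well and the induction does not close.

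The paper's route is genuinely different. It builds explicit bases of $B_{\underline{x}}^w$ and of the subquotients $B_{\underline{x},I}/B_{\underline{x},I\setminus\{w\}}$ directly out of Libedinsky's light leaves and a dual family $b_{\underline{x},\boldsymbol{e}}$, via a triangular pairing (Proposition~\ref{prop:dual basis of light leaves}, Theorem~\ref{thm:graded rank of B_x^w}, Theorem~\ref{thm:filtration theorem}); the rank-two hypothesis enters only to produce the braid morphisms needed to define light leaves (Assumption~\ref{assump:existecne of 2-coloerd map}), not in any localization step. With freeness and graded ranks in hand, the classification of indecomposables and then the hom formula follow, in that order. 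The equivalence with Elias--Williamson is then obtained essentially as you propose, but only after one has proved independently that double leaves form a basis of $\Hom_{\BSbimod}(B_{\underline{x}},B_{\underline{y}})$ (Theorem~\ref{thm:double leaves}); your argument for full faithfulness presupposes the hom formula, so it inherits the gap above.
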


\subsection{Sheaves on moment graphs}
As we mentioned in the above, Fiebig used sheaves on moment graphs to give an alternative proof of Lusztig conjecture \cite{MR2726602} for sufficiently large primes.
In this paper, we prove the following.
\begin{thm}
Assume that the GKM condition holds.
Our category is equivalent to a certain full subcategory of $\mathcal{Z}$-modules (see \ref{subsec:Sheaves on moment graphs} for the precise definition) where $\mathcal{Z}$ is the structure algebra of the moment graph attached to $(W,S)$.
\end{thm}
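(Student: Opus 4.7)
The plan is to construct a functor $\Phi$ from our category to the category of graded $\mathcal{Z}$-modules, show it is fully faithful, and then identify its essential image. Given an object $M$ with decomposition $M \otimes_R Q = \bigoplus_{w \in W} M_Q^w$, left $R$-flatness gives an inclusion $M \hookrightarrow M \otimes_R Q$. Since $\mathcal{Z}$ is a subalgebra of $\prod_{w \in W} R$, an element $z = (z_w)_{w \in W}$ acts on $M \otimes_R Q$ componentwise, via left multiplication by $z_w$ on $M_Q^w$. The crucial point is to verify that this action preserves $M$, so that $\Phi(M) := M$ is naturally a $\mathcal{Z}$-module.

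This verification reduces to the case of Bott-Samelson bimodules. Every object of our category is, by definition, a direct summand of a direct sum of modules of the form $BS(\underline{s}) = R \otimes_{R^{s_1}} \cdots \otimes_{R^{s_l}} R(n)$, and the decomposition of a summand is inherited from that of the ambient module, so $\mathcal{Z}$-stability descends to summands. For $BS(\underline{s})$, one argues by induction on $l$: using the isomorphism $BS(\underline{s}) \cong BS(\underline{s}') \otimes_R (R \otimes_{R^{s_l}} R)$, the inductive step is reduced to the rank-one case $R \otimes_{R^s} R$, where $R \otimes_{R^s} R \otimes_R Q \cong Q \oplus Q$ (with one factor $s$-twisted) and a direct calculation shows that the edge condition $z_v \equiv z_{tv} \pmod{\alpha_t}$ defining membership in $\mathcal{Z}$ is precisely what is needed for $\sum_v z_v m_v$ to remain in $R \otimes_{R^s} R$. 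The GKM condition, which ensures that roots at different edges sharing a vertex are non-proportional, is exactly what allows the rank-one verifications to be patched compatibly in the inductive step.

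For full faithfulness: the diagonal embedding $R \hookrightarrow \mathcal{Z}$, $r \mapsto (r)_{w \in W}$, shows that any $\mathcal{Z}$-linear morphism is left $R$-linear, and since the right $R$-action on $M_Q^w$ is the left action twisted by $w$, such a morphism is automatically a morphism of $R$-bimodules. Moreover, $\mathcal{Z} \otimes_R Q \cong \prod_w Q$ contains orthogonal idempotents $e_w$ projecting onto $M_Q^w$, so any $\mathcal{Z}$-linear morphism preserves the $Q$-decomposition. Conversely, morphisms in our category commute with the componentwise $\mathcal{Z}$-action by construction, so the two notions of morphism coincide.

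Finally, the essential image is characterized as the full subcategory of graded $\mathcal{Z}$-modules whose underlying left $R$-module structure and induced $Q$-decomposition match those of an object of our category; this is the subcategory described precisely in the main text, and under GKM it sits inside Fiebig's category of sheaves on the Bruhat moment graph. The main obstacle is the inductive verification for Bott-Samelson bimodules in the second paragraph: tracking the $Q$-decomposition explicitly through the iterated tensor product and matching the resulting integrality conditions to the edge conditions defining $\mathcal{Z}$ is combinatorially delicate, and it is precisely here that the GKM hypothesis is indispensable.
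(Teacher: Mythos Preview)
Your approach goes in the opposite direction from the paper's. The paper defines a functor $F\colon \ModCat{\mathcal{Z}}^f\to\mathcal{C}'$ from $\mathcal{Z}$-modules to bimodules (a $\mathcal{Z}$-module already has the decomposition $M_Q=\bigoplus_x M_Q^x$ built in from the idempotents of $\mathcal{Z}\otimes_R Q$), checks that $F$ is fully faithful on torsion-free objects by an easy argument, and then proves the single compatibility $F(\mathcal{Z}\otimes_{\mathcal{Z}^s}M)\simeq F(M)\otimes_R B_s$. The GKM condition enters exactly once, in the lemma $\mathcal{Z}=\mathcal{Z}^s\oplus (w(\delta))_{w}\,\mathcal{Z}^s$, which makes that compatibility transparent. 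The target subcategory $\ModCat{\mathcal{Z}}^{\mathrm{S}}$ is then \emph{defined} as the Karoubi closure of the objects $\mathcal{Z}\otimes_{\mathcal{Z}^{s_1}}\cdots\otimes_{\mathcal{Z}^{s_l}}R(e)(n)$, so essential surjectivity is automatic.

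Your sketch has two genuine gaps. First, the inductive step you describe does not actually reduce to the rank-one case: the componentwise action of $z=(z_w)$ on $(M\otimes B_s)_Q^w\simeq M_Q^w\oplus M_Q^{ws}$ is left multiplication by $z_w$ on \emph{both} summands, which is not the $\mathcal{Z}$-action on $M$ tensored with anything on $B_s$. The way to make the induction work is to use the very decomposition $\mathcal{Z}=\mathcal{Z}^s\oplus (w(\delta))_w\,\mathcal{Z}^s$ that the paper proves: for $z\in\mathcal{Z}^s$ one has $z_w=z_{ws}$, so $z$ acts on $M\otimes_{R^s}R$ as $(zm)\otimes f$ via the inductive hypothesis, while $(w(\delta))_w$ acts as right multiplication by $\delta$ on $M\otimes B_s$. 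So your route, carried out correctly, lands on the same GKM lemma as the paper. Second, your description of the essential image is circular: you say it consists of those $\mathcal{Z}$-modules that ``match an object of our category'', but the theorem refers to the concretely defined subcategory $\ModCat{\mathcal{Z}}^{\mathrm{S}}$, and identifying $\Phi(B_{\underline{x}})$ with $\mathcal{Z}\otimes_{\mathcal{Z}^{s_1}}\cdots\otimes_{\mathcal{Z}^{s_l}}R(e)$ again requires the translation-functor compatibility. In short, your direction is viable but not simpler; the paper's direction avoids the stability verification entirely by starting on the side where the $\mathcal{Z}$-action is given for free.
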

For the GKM condition, see \ref{subsec:Sheaves on moment graphs}.
As a corollary, the category of Elias-Williamson is equivalent to the full subcategory of $\mathcal{Z}$-modules.
If $(W,S,V)$ is coming from a Kac-Moody group, this is proved using parity sheaves by combining works of Fiebig-Williamson \cite{MR3330913} and Riche-Williamson \cite{MR3805034}.

\subsection{Proof}
Even though the definitions and theorems are similar to that of Soergel \cite{MR2329762}, it seems difficult to follow his argument in our setting.
For example, he proved that after a suitable localization, the modules decomposes into modules attached to rank one \cite[Lemma~6.10]{MR2329762}.
However this does not hold in our case.
Fiebig used similar arguments in his study of moment graphs.
To use this argument, he assumed GKM condition on the moment graph.
This condition does not follow from our assumptions on $V$.

There is another point in Soergel's argument which we cannot apply to our case.
He considered the ``standard module'' $\Delta_x$ for each $x\in W$ and proved that there exists an extension between $\Delta_x$ and $\Delta_y$ only when $x^{-1}y$ is a reflection (a conjugation of an element in $S$).
Therefore the category $\mathcal{F}_\Delta$ consisting of the objects which admit a ``standard flag'' behaves well.
However in our case, there are more extensions and the analogue of the category $\mathcal{F}_\Delta$ does not seem to behave well.

We analyze the modules in \eqref{eq:BS module, in Introduction} directly using light leaves introduced by Libedinsky \cite{MR2441994}.
Using Soergel's theorem, Libedinsky proved that the light leaves give a basis of a certain space of homomorphism.
In this paper, we prove Libedinsky's result directly and use it to prove Soergel's categorification theorem.
The argument is new even for the original case.

\subsection{Application}
Since our definition of the category is a direct generalization of that of Soergel, we can hope that many works with Soergel bimodules can be generalized to our setting with a few modification of arguments.
This should be the biggest application of our theory.

We also have the following concrete application.
Let $G$ be a connected reductive group defined over a positive characteristic field and $T$ a maximal torus.
Then we give an action of our category (or equivalently, the category of Elias-Williamson) attached to the affine Weyl group of $G$ on a regular block of $G_1T$-modules where $G_1$ is the Frobenius kernel~\cite{arXiv:1904.11350}.
This is a $G_1T$-version of a conjecture of Riche-Williamson~\cite{MR3805034}.

\subsection{Organization of the paper}
In the next section, we introduce our category and give basic properties of it.
We also introduce the notation on Hecke algebras.
In Section~\ref{sec:Light leaves}, we recall the definition of light leaves.
Using the light leaves, we prove freeness of a certain module and calculate its graded rank.
In Section~\ref{sec:The categorification theorem}, we prove the categorification theorem based on the theorems in Section~\ref{sec:Light leaves}.
In the final section, we compare our category with the other categories.

\subsection*{Acknowledgment}
The author was supported by JSPS KAKENHI Grant Number 18H01107.
The author thanks Henning Haahr-Andersen and Masaharu Kaneda for reading the paper and giving helpful comments.

\section{The category}\label{sec:The category}
We follow notation in \cite{MR3555156}.

\subsection{A representation}
Throughout this paper, let $(W,S)$ be a Coxeter system such that $\#S < \infty$ and $\Coef$ a noetherian integral domain.
The length function $W\to  \Z_{\ge 0}$ of $W$ is denoted by $\ell$ and the Bruhat order on $W$ is denoted by $\le$.

Let $(V,\{\alpha_s\}_{s\in S},\{\alpha_s^\vee\}_{s\in S})$ be a triple such that
\begin{itemize}
\item $V$ is a free $\Coef$-module of finite rank with a $\Coef$-linear action of $W$.
\item $\alpha_s\in V$, $\alpha_s^\vee\in V^*$ where $V^* = \Hom_{\Coef}(V,\Coef)$.
\end{itemize}
We assume that this satisfies:
\begin{enumerate}
\item $\langle\alpha_s^\vee,\alpha_s\rangle = 2$ for each $s\in S$.
\item $s(v) = v - \langle\alpha_s^\vee,v\rangle\alpha_s$ for $s\in S$ and $v\in V$.
\item $\alpha_s^\vee\colon V\to \Coef$ is surjective and $\alpha_s\ne 0$ for any $s\in S$.
\end{enumerate}
Note that the third condition follows from the first condition if $2\in \Coef^\times$.
Later we will add one more assumption (Assumption~\ref{assump:existecne of 2-coloerd map}).
Set $R = S(V)$ and let $Q$ be the fraction field of $R$.
We regard $R$ as a graded algebra via $\deg(V) = 2$.
The group $W$ on $V$ extends uniquely on $R$ by automorphisms of algebras.
We remark that $R$ is a noetherian integral domain.

We call $t\in W$ a reflection if it is conjugate to an element in $S$.
Let $t = wsw^{-1}$ be a reflection where $s\in S$ and $w\in W$.
Set $\alpha_t = w(\alpha_s)$.
This depends on a choice of $s,w$ in general, so we fix such $s,w$ to define $\alpha_t$.
By the following lemma, $\Coef^\times \alpha_t$ does not depend on the choice of $s,w$.

\begin{lem}
If $wsw^{-1} = s'$ where $s,s'\in S$ and $w\in W$, then $\alpha_{s'}\in \Coef^\times w(\alpha_s)$.
\end{lem}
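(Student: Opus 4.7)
The plan is to compute $(s'-\Id)(V)$ in two different ways and match the results.

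First, from $s' = wsw^{-1}$ I would apply the given reflection formula for $s$ to $w^{-1}(u)$ and then apply $w$, which gives
\begin{equation*}
s'(u) = u - \langle \alpha_s^\vee, w^{-1}(u)\rangle\, w(\alpha_s)
\qquad (u\in V).
\end{equation*}
Since $w^{-1}$ is a $\Coef$-linear automorphism of $V$ and $\alpha_s^\vee\colon V\to \Coef$ is surjective by assumption (3), the composition $u\mapsto \langle \alpha_s^\vee, w^{-1}(u)\rangle$ is also surjective, and therefore $(s'-\Id)(V) = \Coef\cdot w(\alpha_s)$.

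On the other hand, applying the reflection formula directly to the simple reflection $s'$ gives $(s'-\Id)(u) = -\langle\alpha_{s'}^\vee,u\rangle\,\alpha_{s'}$, and the surjectivity of $\alpha_{s'}^\vee$ yields $(s'-\Id)(V) = \Coef\cdot \alpha_{s'}$. Comparing the two descriptions,
\begin{equation*}
\Coef\cdot \alpha_{s'} = \Coef\cdot w(\alpha_s)
\end{equation*}
as $\Coef$-submodules of $V$.

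To finish, I would write $\alpha_{s'} = c\, w(\alpha_s)$ and $w(\alpha_s) = c'\, \alpha_{s'}$ with $c,c'\in\Coef$, obtaining $(cc'-1)\alpha_{s'} = 0$. Since $V$ is a free $\Coef$-module over the integral domain $\Coef$ and $\alpha_{s'}\ne 0$ by assumption, the element $\alpha_{s'}$ is non-torsion, forcing $cc' = 1$ and hence $c\in \Coef^\times$. No real obstacle is expected: the content of the argument is only the observation that the image of $s'-\Id$ is intrinsic to $s'$ and admits two natural descriptions, one via $\alpha_{s'}$ and one via the conjugate $w(\alpha_s)$, together with a standard torsion-free argument to lift the equality of cyclic submodules to an equality up to a unit of scalars.
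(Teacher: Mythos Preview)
Your proof is correct and follows essentially the same idea as the paper's: both compute the action of $s'-\Id$ in two ways, once via the formula for $s'$ and once via the conjugate $wsw^{-1}$, and then conclude by the same torsion-free argument that the scalar relating $\alpha_{s'}$ and $w(\alpha_s)$ is a unit.

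The only cosmetic difference is that the paper picks a single element $\delta\in V$ with $\langle\alpha_s^\vee,\delta\rangle=1$ (which exists by the surjectivity assumption), evaluates $s'$ at $w(\delta)$ to obtain $w(\alpha_s)=r\,\alpha_{s'}$ directly, and then repeats the computation with the roles of $(s,w)$ and $(s',w^{-1})$ swapped to obtain the reverse relation $w^{-1}(\alpha_{s'})=r'\alpha_s$. Your version instead identifies the full image $(s'-\Id)(V)$ as a cyclic $\Coef$-submodule in one step, which yields both containments at once and avoids the symmetric repetition. Either way the final unit argument is identical.
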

\begin{proof}
Take $\delta\in V$ such that $\langle \alpha_s^\vee,\delta\rangle = 1$.
Then we have $s(\delta) = \delta - \alpha_s$.
Hence $s'(w(\delta)) = ws(\delta) = w(\delta) - w(\alpha_s)$.
On the other hand, we have $s'(w(\delta)) = w(\delta) - r\alpha_{s'}$ where $r = \langle \alpha_{s'}^\vee,w(\delta)\rangle\in\Coef$.
Hence $w(\alpha_s) = r\alpha_{s'}$.
Replacing $s,w,s'$ with $s',w^{-1},s$ respectively, there exists $r'$ such that $w^{-1}(\alpha_{s'}) = r'\alpha_s$.
Therefore $w(\alpha_s) = r\alpha_{s'} = rr'w(\alpha_s)$.
Since $V$ is free, $\Coef$ is an integral domain and $w(\alpha_s)\ne 0$, we have $rr' = 1$.
Therefore $r,r'\in \Coef^\times$.
\end{proof}

\subsection{The category}
First let $\mathcal{C}'$ be the category of graded $R$-bimodules $M$ with a decomposition $M\otimes_R Q = \bigoplus_{w\in W}M_Q^w$ as $(Q,R)$-bimodules such that 
\begin{itemize}
\item $M_Q^w \ne 0$ only for finite $w$.
\item For $f\in R$ and $m\in M_Q^w$, we have $mf = w(f)m$.
\end{itemize}
A homomorphism $\varphi\colon M\to N$ in $\mathcal{C}'$ is a degree zero bimodule homomorphism $M\to N$ which sends $M_Q^w$ to $N_Q^w$.
Note that $\Hom_{\mathcal{C}}(M,N)$ has a structure of $R$-bimodules via $(a\varphi b)(m) = \varphi(amb)$ for $\varphi\in \Hom_{\mathcal{C}}(M,N)$, $m\in M$ and $a,b\in R$.

\begin{rem}\label{rem:right Q-action}
By the second condition, the left action of $f\in R\setminus\{0\}$ on $M_Q^w$ is invertible.
Therefore it is also invertible on $M\otimes_R Q$ and therefore $M\otimes_R Q$ is a $Q$-bimodule.
The decomposition $M\otimes_R Q = \bigoplus_{w\in W}M_Q^w$ is the decomposition as $Q$-bimodules.
\end{rem}

\begin{rem}
If $V$ is a faithful $W$-representation, the decomposition $M\otimes_R Q = \bigoplus_{w\in W}M_Q^w$ is uniquely determined by the $Q$-bimodule structure (if exists) and each $R$-bimodule homomorphism preserves the decompositions.
Therefore in this case $\mathcal{C}'$ is a full subcategory of the category of $R$-bimodules.
\end{rem}

This is basically the category which we want to consider.
It is useful to add some more assumptions.
For $M\in \mathcal{C}'$, we say $M\in \mathcal{C}$ if $M$ is finitely generated as a $R$-bimodule and flat as a right $R$-module.
Since $M$ is flat as a right $R$-module, it is torsion free.
Hence we have $M\hookrightarrow M\otimes_R Q = \bigoplus_{w\in W}M_Q^w$.
In particular $M$ is torsion-free as a left $R$-module.
As in Remark~\ref{rem:right Q-action}, $f\notin R\setminus\{0\}$ acts invertible from the left on $M\otimes_R Q$.
Hence we have a homomorphism $Q\otimes_R M\hookrightarrow M\otimes_R Q$.
It is not difficult to see that this is an isomorphism.

A typical example of an object in $\mathcal{C}$ is the module which we will denote $R_w$.
For $w\in W$, let $R_w$ be an object of $\mathcal{C}$ defined as follows: as a left $R$-module, $R_w = R$ and the bimodule structure is given by $mf = w(f)m$ for $m\in R_w$ and $f\in R$.
The module $(R_w)_Q^x$ is given by
\[
(R_w)_Q^x = 
\begin{cases}
Q & (x = w),\\
0 & (x\ne w).
\end{cases}
\]
We denote $Q_w = R_w\otimes_R Q$ which is a $Q$-bimodule.

Let $M\in \mathcal{C}$.
\begin{itemize}
\item For $m\in M$, let $m_w$ be the image of $m$ under $M\to \bigoplus_{w\in W}M_Q^w\twoheadrightarrow M_Q^w$.
\item We set $M_Q = M\otimes_R Q$.
\item For $I\subset W$, let $M_I$ (resp.\ $M^I$) be the inverse image of $\bigoplus_{w\in I}M_Q^w$ in $M$ (resp.\ the image of $M$ in $\bigoplus_{w\in I}M_Q^w$). We can regard $M_I\subset M^I\subset \bigoplus_{w\in I}M_Q^w$.
\item For $w\in W$, we write $M_w$ (resp.\ $M^w$) for $M_{\{w\}}$ (resp.\ $M^{\{w\}}$).
\item We write $\supp_W(M) = \{w\in W\mid M_Q^w\ne 0\}$ and $\supp_W(m) = \{w\in W\mid m_w \ne 0\}$ for $m\in M$.
\end{itemize}
We have $M_I = \{m\in M\mid \supp_W(m)\subset I\}$.

\begin{lem}
The modules $M_I$ and $M^I$ are both objects in $\mathcal{C}'$ such that $(M_I)_Q = (M^I)_Q = \bigoplus_{w\in I}M_Q^w$.
\end{lem}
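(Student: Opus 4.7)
The plan is to handle $M_I$ directly and then deduce the statement for $M^I$ by realising it as a quotient of $M$, so the main work is concentrated on $M_I$.

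First, I would check that $M_I$ is a graded $R$-subbimodule of $M$. If $m\in M_I$ and $f\in R$, then inside $M_Q^w$ one has $(fm)_w = f\cdot m_w$ and, using the defining relation of $\mathcal{C}'$, $(mf)_w = m_w f = w(f)\,m_w$. Both vanish for $w\notin I$, so $M_I$ is preserved under the left and right $R$-actions. The grading compatibility is automatic because the decomposition on $M\otimes_R Q$ may be taken to be homogeneous, so the projections $M\to M_Q^w$ are degree-preserving and $M_I$ is a graded subobject.

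Next I would prove that $(M_I)_Q = \bigoplus_{w\in I}M_Q^w$. The inclusion ``$\subset$'' is immediate: since $Q$ is a localisation of $R$, the functor $-\otimes_R Q$ is exact, so $M_I\otimes_R Q\hookrightarrow M\otimes_R Q = \bigoplus_{w\in W}M_Q^w$, and by construction the image lands in $\bigoplus_{w\in I}M_Q^w$. For the reverse inclusion the key tool is the isomorphism $Q\otimes_R M\cong M\otimes_R Q$ recalled just before the lemma. Given $x\in\bigoplus_{w\in I}M_Q^w\subset M\otimes_R Q$, writing it as an element of $Q\otimes_R M$ and clearing a common left denominator produces a nonzero $f\in R$ with $fx\in M$. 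Then $(fx)_w = f\cdot x_w = 0$ for $w\notin I$, so $fx\in M_I$, whence $x = f^{-1}(fx)\in M_I\otimes_R Q$. This yields the decomposition
\[
(M_I)_Q^w = \begin{cases} M_Q^w & (w\in I),\\ 0 & (w\notin I),\end{cases}
\]
and the bimodule condition $mf = w(f)m$ on each summand is inherited from $M$; finiteness of $\supp_W(M_I)\subset\supp_W(M)$ is clear.

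For $M^I$, I would observe that the surjection $M\to\bigoplus_{w\in I}M_Q^w$ has kernel exactly $M_{W\setminus I}$, giving a short exact sequence $0\to M_{W\setminus I}\to M\to M^I\to 0$ of graded $R$-bimodules. Applying the exact functor $-\otimes_R Q$ and using the already proved statement for $M_{W\setminus I}$ yields
\[
(M^I)_Q \;=\; \bigl(\textstyle\bigoplus_{w\in W}M_Q^w\bigr)\big/\bigl(\textstyle\bigoplus_{w\notin I}M_Q^w\bigr)\;=\;\textstyle\bigoplus_{w\in I}M_Q^w,
\]
with the obvious decomposition satisfying the axioms of $\mathcal{C}'$. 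The step I expect to be the main obstacle is the ``clearing denominators'' argument for $M_I$: it hinges on the isomorphism $Q\otimes_R M\cong M\otimes_R Q$, which is precisely where the right-flatness hypothesis in the definition of $\mathcal{C}$ gets used, and without it neither the $\supset$-direction nor the statement for $M^I$ would go through.
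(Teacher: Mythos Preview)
Your proof is correct, and the key ``clearing denominators'' step for $M_I$ is exactly what the paper does. The only organisational difference is in how you treat $M^I$: the paper observes the chain of inclusions $M_I\subset M^I\subset\bigoplus_{w\in I}M_Q^w$, tensors with $Q$, and notes that once $(M_I)_Q\to\bigoplus_{w\in I}M_Q^w$ is shown to be surjective the sandwiched module $(M^I)_Q$ is forced to coincide with both ends---so both cases fall out at once. Your route via the short exact sequence $0\to M_{W\setminus I}\to M\to M^I\to 0$ and the already-proved case for $M_{W\setminus I}$ is equally valid and arguably makes the structure more transparent, at the cost of a few extra lines.
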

\begin{proof}
Since $M_I\hookrightarrow M^I\subset \bigoplus_{w\in I}M_Q^w$, we have $M_I\otimes Q\hookrightarrow M^I\otimes Q\subset \bigoplus_{w\in I}M_Q^w$.
Hence it is sufficient to prove that $M_I\otimes Q\hookrightarrow \bigoplus_{w\in I}M_Q^w$ is surjective.
Let $m\in \bigoplus_{w\in I}M_Q^w \subset M_Q$ and take $0\ne f\in R$ such that $fm \in M$.
Then we have $fm_w = (fm)_w = 0$ for any $w\in W\setminus I$.
Since $M_Q^w$ is torsion-free, $m_w = 0$.
Therefore $m\in (M_I)_Q$.
\end{proof}

The following lemma is clear from the definitions.

\begin{lem}\label{lem:left adjointness of upper I}
Let $I\subset W$ and $M,N\in \mathcal{C}$ such that $\supp_W(N)\subset I$.
Then we have $\Hom_{\mathcal{C}'}(M,N)\simeq \Hom_{\mathcal{C}'}(M^I,N)$ and $\Hom_{\mathcal{C}'}(N,M)\simeq \Hom_{\mathcal{C}'}(N,M_I)$.
\end{lem}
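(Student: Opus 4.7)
The plan is to exploit two features of the setup: (i) every morphism $\varphi$ in $\mathcal{C}'$ becomes, after $-\otimes_R Q$, a map $\varphi_Q$ respecting the $W$-decomposition, and (ii) any object of $\mathcal{C}$ embeds into its localization at $Q$ (it is flat, hence torsion-free, as a right $R$-module), so vanishing after tensoring with $Q$ implies vanishing.

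For the first isomorphism $\Hom_{\mathcal{C}'}(M,N) \simeq \Hom_{\mathcal{C}'}(M^I,N)$, I would use the natural map given by precomposition with the canonical surjection $\pi \colon M \twoheadrightarrow M^I$. Injectivity is immediate from the surjectivity of $\pi$. For surjectivity, given $\varphi \colon M \to N$, I must show it factors through $\pi$, i.e.\ that it vanishes on $\ker\pi = M_{W\setminus I}$. Take $m \in M_{W\setminus I}$; then $m \in \bigoplus_{w\in W\setminus I} M_Q^w$ inside $M_Q$, and since $\varphi_Q$ preserves the $W$-decomposition, $\varphi_Q(m) \in \bigoplus_{w\in W\setminus I} N_Q^w$. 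But $\supp_W(N)\subset I$ forces $N_Q^w = 0$ for $w\notin I$, so $\varphi_Q(m) = 0$, and by (ii) above $\varphi(m) = 0$ in $N$. This gives the desired factorization.

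For the second isomorphism $\Hom_{\mathcal{C}'}(N,M) \simeq \Hom_{\mathcal{C}'}(N,M_I)$, the natural map is postcomposition with the inclusion $M_I \hookrightarrow M$; injectivity is clear. For surjectivity, given $\varphi \colon N \to M$, I need to check $\varphi(N) \subset M_I$, equivalently $\supp_W(\varphi(n)) \subset I$ for every $n \in N$. Again using that $\varphi_Q$ respects the decomposition, $\varphi_Q(n)_w = \varphi_Q(n_w)$, and the hypothesis $\supp_W(N)\subset I$ gives $n_w = 0$ for $w \notin I$, so $\varphi(n)_w = 0$ for $w \notin I$, as required.

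No step looks genuinely hard: the argument is bookkeeping with the $W$-decomposition. The only place to be slightly careful is invoking $N \hookrightarrow N_Q$ to promote vanishing of $\varphi_Q(m)$ to vanishing of $\varphi(m)$ in the first part, which is precisely why $\mathcal{C}$ (as opposed to the larger $\mathcal{C}'$) is defined with the right $R$-flatness condition.
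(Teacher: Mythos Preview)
Your proof is correct and is precisely the routine verification the paper has in mind; the paper simply states ``clear from the definitions'' without spelling it out. Your explicit identification of $\ker(M\twoheadrightarrow M^I)$ with $M_{W\setminus I}$ and your remark that $N\in\mathcal{C}$ (rather than merely $\mathcal{C}'$) is what makes $N\hookrightarrow N_Q$ available are exactly the points one checks.
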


\begin{lem}\label{lem:finitey generated as left/right module}
Any $M\in \mathcal{C}$ is finitely generated as a left (resp.\ right) $R$-module.
\end{lem}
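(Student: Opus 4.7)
The plan is to embed $M$ into a visibly finitely generated (both left and right) $R$-submodule of $M_Q$ and then invoke noetherianity of $R$.

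First I would use that $M$ is a finitely generated $R$-bimodule: pick bimodule generators $m_1,\dots,m_n$, so every element of $M$ is a finite sum of terms of the form $am_ib$ with $a,b\in R$. By right $R$-flatness we have the embedding $M\hookrightarrow M_Q = \bigoplus_{w\in W}M_Q^w$ recorded in the paragraph after the definition of $\mathcal{C}$, and since $\supp_W(M)$ is finite, each $m_i$ has a finite decomposition $m_i = \sum_{w\in\supp_W(m_i)} (m_i)_w$ inside $M_Q$. Let $N\subset M_Q$ be the left $R$-submodule generated by the finite set $\{(m_i)_w \mid 1\le i\le n,\ w\in \supp_W(m_i)\}$.

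Next comes the heart of the argument, which uses the twisted right action: for any $b\in R$ we have $(m_i)_w\cdot b = w(b)(m_i)_w$, and consequently
\[
am_ib = \sum_{w} a\, w(b)\, (m_i)_w \in N
\]
inside $M_Q$, because $aw(b)\in R$. Hence $M\subset N$ as left $R$-submodules of $M_Q$. The same identity, read the other way, shows that $N$ coincides with the \emph{right} $R$-submodule of $M_Q$ generated by the same finite set, because for each fixed $w$ the map $r\mapsto w(r)$ is a bijection of $R$; so $N$ is simultaneously finitely generated on the left and on the right.

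Finally, since $R=S(V)$ is a polynomial ring over the noetherian ring $\Coef$, Hilbert's basis theorem gives that $R$ is noetherian; therefore $N$ is noetherian as a left (resp.\ right) $R$-module, and its submodule $M$ is finitely generated on each side. I expect the only real subtlety to be the key identity $(m_i)_w \cdot b = w(b)(m_i)_w$ that converts the two-sided action into a one-sided one; once that translation is in hand, the rest is standard noetherian bookkeeping.
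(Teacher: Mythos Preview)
Your proof is correct and follows essentially the same approach as the paper: embed $M$ into a finite direct sum of pieces on which the right $R$-action becomes the left $R$-action twisted by $w$, deduce that the ambient module is finitely generated on one side, and conclude by noetherianity of $R$. The only cosmetic difference is that the paper works with the whole quotients $M^w$ (each a finitely generated bimodule, hence a finitely generated left $R$-module via $mf=w(f)m$) and embeds $M\hookrightarrow\bigoplus_w M^w$, whereas you pick explicit bimodule generators and track their $w$-components inside $M_Q$; your module $N$ sits inside the paper's $\bigoplus_w M^w$.
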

\begin{proof}
We only prove that $M$ is finitely generated as a left $R$-module.
Since $M^w$ is a quotient of $M$, this is also a finitely generated $R$-bimodule.
The formula $mf = w(f)m$ for $f\in R$, $m\in M^w$ says that $M^w$ is also finitely generated as a left $R$-module.
Since $M\hookrightarrow \bigoplus_{w\in W}M^w$ and $M^w \ne 0$ only for finite $w$, $M$ is a finitely generated left $R$-module.
\end{proof}

We define a tensor product $M\otimes N$ of $M,N\in \mathcal{C}$ as follows.
As an $R$-bimodule, $M\otimes N = M\otimes_R N$ and we attache the decomposition $(M\otimes N)_Q^w = \bigoplus_{xy = w}M_Q^x\otimes_Q N_Q^y$.
This gives a structure of a monoidal category to $\mathcal{C}$.
The unit object is $R_e$ where $e$ is the unit element of $W$.
The following is obvious from the definition.
\begin{lem}\label{lem:support of the tensor product}
We have $\supp_W(M\otimes N) = \{xy \mid x\in \supp_W(M),y\in \supp_W(N)\}$.
\end{lem}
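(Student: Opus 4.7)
The plan is to read off both inclusions directly from the definition. By the definition of the tensor product in $\mathcal{C}$,
\[
(M\otimes N)_Q^w \;=\; \bigoplus_{xy=w} M_Q^x \otimes_Q N_Q^y,
\]
so $w \in \supp_W(M\otimes N)$ if and only if the summand $M_Q^x \otimes_Q N_Q^y$ is nonzero for some $x,y \in W$ with $xy = w$. Hence the lemma reduces to the claim: for each pair $x,y \in W$, the tensor product $M_Q^x \otimes_Q N_Q^y$ is nonzero if and only if both $M_Q^x$ and $N_Q^y$ are nonzero.

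The direction $(\Rightarrow)$ is immediate. For $(\Leftarrow)$, I would observe that the right action of $Q$ on $M_Q^x$, given by $m\cdot f = x(f)m$, is a genuine right $Q$-module structure because $x$ restricts to a field automorphism of $Q$. Thus a nonzero $M_Q^x$ is a nonzero right $Q$-vector space, and similarly $N_Q^y$ (with its untwisted left action from the defining property of $\mathcal{C}'$) is a nonzero left $Q$-vector space. Since the tensor product over a field of two nonzero vector spaces is nonzero, the conclusion follows.

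The only bookkeeping to perform is to check that $M_Q^x \otimes_Q N_Q^y$ really is the tensor product of $Q$-modules compatible with the twisted right action on $M_Q^x$, via the balancing relation $x(f)m \otimes n = (m\cdot f)\otimes n = m \otimes fn$; this is formal. I do not foresee any serious obstacle, which is consistent with the author's remark that the statement is obvious from the definition.
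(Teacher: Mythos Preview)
Your proposal is correct and is precisely the verification the paper omits: the paper gives no proof at all beyond the sentence ``The following is obvious from the definition,'' and your argument unpacks that definition just as intended. The only point worth double-checking is the one you already flag, namely that the right $Q$-action on $M_Q^x$ (twisted by $x$) really makes $M_Q^x$ a $Q$-vector space so that the tensor product over the field $Q$ of two nonzero spaces is nonzero; this holds because $x$ acts as a field automorphism of $Q$, exactly as you say.
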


\subsection{Notes on gradings}
Let $M = \bigoplus_{i\in \Z}M^i$ be a graded left $R$-module or right $R$-module or $R$-bimodule.
The grading shift $M(1)$ is defined as $M(1)^i = M^{i + 1}$.
Therefore, for degree $k$ element $f\in R$, the multiplication $m\mapsto fm$ gives a degree $0$ homomorphism $M\to M(k)$ and the submodule $fR$ is isomorphic to $R(-k)$.
If $M$ is a graded free left $R$-module, namely $M\simeq \bigoplus_i R(n_i)$ for $n_i\in \Z$, we define its graded rank $\grk(M)$ by $\grk(M) = \sum_i v^{n_i}$ where $v$ is an indeterminate.
Obviously we have $\grk(M_1\oplus M_2) = \grk(M_1) + \grk(M_2)$ and $\grk(M(1)) = v\grk(M)$.
If $M$ has a basis $\{m_i\}$, then $\grk(M) = \sum_i v^{-\deg(m_i)}$.

\begin{lem}
Let $M\in \mathcal{C}$, $w\in W$ and assume that $M^w$ is a graded free left $R$-module.
Then $M^w\simeq \bigoplus_i R_w(n_i)$ for some $n_i\in \Z$ as an object in $\mathcal{C}$.
\end{lem}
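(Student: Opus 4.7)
The plan is to exploit the fact that $M\in\mathcal{C}$ forces the right $R$-action on $M^w$ to coincide with the $w$-twisted left action, so any graded left $R$-free basis is automatically a graded bimodule basis consisting of copies of $R_w$.

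First I would use the hypothesis to pick homogeneous elements $\{e_i\}\subset M^w$ of degrees $-n_i$ such that $M^w=\bigoplus_i R\,e_i$ as graded left $R$-modules. Because $M\in\mathcal{C}$ is torsion-free, we have the embedding $M\hookrightarrow M_Q=\bigoplus_{x\in W}M_Q^x$, which descends to an embedding $M^w\hookrightarrow M_Q^w$. The defining property of $\mathcal{C}'$ applied to $M_Q^w$ gives $mf=w(f)m$ for every $m\in M_Q^w$ and every $f\in R$; in particular each basis vector satisfies $e_i f=w(f)\,e_i$.

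Next I would define
\[
\varphi\colon \bigoplus_i R_w(n_i)\longrightarrow M^w
\]
by sending the canonical degree $-n_i$ generator $1\in R_w(n_i)$ to $e_i$ and extending by left $R$-linearity. By construction $\varphi$ is an isomorphism of graded left $R$-modules of degree zero. Using $e_i f=w(f)\,e_i$ one checks directly that $\varphi(a f)=\varphi(w(f)a)=w(f)a\,e_i=a e_i f=\varphi(a)f$ for $a\in R_w(n_i)$, so $\varphi$ is also right $R$-linear, hence an isomorphism of graded $R$-bimodules. Finally, both source and target have $W$-support $\{w\}$, so the $W$-decomposition of the respective $\otimes_R Q$'s concentrates in the $w$-component, which $\varphi$ preserves tautologically; both sides in fact lie in $\mathcal{C}$ because the twisted action of the commutative ring $R$ is free on the right whenever it is free on the left.

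The only subtlety, and what makes the proof essentially formal, is the initial observation that the right action on $M^w$ is completely determined by the embedding $M^w\hookrightarrow M_Q^w$: once this is noted there is no further compatibility to verify and everything reduces to the choice of a graded left $R$-free basis.
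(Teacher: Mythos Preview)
Your proof is correct and follows essentially the same approach as the paper: both rely on the observation that on $M^w\subset M_Q^w$ the right action of $f\in R$ coincides with the left action of $w(f)$, so any graded left $R$-module isomorphism $M^w\simeq\bigoplus_i R_w(n_i)$ is automatically an $R$-bimodule isomorphism (and trivially respects the $W$-decomposition). The paper's version is simply the one-line compression of what you have written out in detail.
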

\begin{proof}
The assumption says that we have an isomorphism $M^w\simeq \bigoplus_i R_w(n_i)$ as left $R$-modules for some $n_i\in \Z$.
Since the right action of $f\in R$ is equal to the left action of $w(f)$ on both sides, this is an isomorphism as $R$-bimodules.
\end{proof}

\begin{lem}\label{lem:filtration, graded rank}
Assume that $\Coef$ is a field.
Let $M$ be a finitely generated graded free left $R$-module with the graded rank $p$ and $0 = M_0\subset M_1\subset M_2\subset \dots \subset M_r = M$ be a filtration as graded left $R$-modules.
Assume that there exists a graded free left $R$-submodule $N_i\subset M_i/M_{i - 1}$ with the graded rank $q^{(i)}$ such that $\sum_i q^{(i)} = p$.
Then we have $N_i = M_i/M_{i - 1}$.
\end{lem}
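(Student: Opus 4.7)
The plan is to compare Hilbert (graded Poincaré) series. Since $\Coef$ is a field and $R = S(V)$ is a graded polynomial ring with $\deg V = 2$, it is Noetherian and every finitely generated graded $R$-module has finite-dimensional graded pieces and a well-defined Hilbert series $h(X)(t) = \sum_d (\dim_\Coef X^d) t^d$, a Laurent series in $t$ bounded below. This assignment is additive along short exact sequences, and for a graded free module $F$ of graded rank $q(v) = \sum_j v^{n_j}$ the paper's conventions ($\grk(R(n)) = v^n$, $R(n)^i = R^{i+n}$) give $h(F)(t) = q(t^{-1}) \cdot h(R)(t)$.

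First I would observe that since $R$ is Noetherian and $M$ is finitely generated, each submodule $M_i$ and hence each subquotient $M_i/M_{i-1}$ is a finitely generated graded $R$-module, so its Hilbert series is defined. Applying additivity to the short exact sequences $0 \to M_{i-1} \to M_i \to M_i/M_{i-1} \to 0$ and inducting on $i$ yields
\[
h(M) = \sum_{i=1}^{r} h(M_i/M_{i-1}).
\]
The inclusion $N_i \hookrightarrow M_i/M_{i-1}$ of graded modules restricts in each degree to an injection of finite-dimensional $\Coef$-vector spaces, so coefficient-wise in $t$ we have $h(M_i/M_{i-1}) \geq h(N_i) = q^{(i)}(t^{-1}) \cdot h(R)$.

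Summing these inequalities and using the hypothesis $\sum_i q^{(i)} = p$ together with $h(M) = p(t^{-1}) \cdot h(R)$ gives
\[
h(M) = \sum_i h(M_i/M_{i-1}) \geq \sum_i h(N_i) = p(t^{-1}) \cdot h(R) = h(M).
\]
Consequently every intermediate inequality is an equality, and in particular $h(M_i/M_{i-1}) = h(N_i)$ for each $i$. Since $N_i \hookrightarrow M_i/M_{i-1}$ is an injection of graded $\Coef$-vector spaces whose graded pieces have equal finite dimension, it must be an isomorphism in every degree, whence $N_i = M_i/M_{i-1}$.

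The main (mild) obstacle is only bookkeeping: one must check that every module appearing is finitely generated so its Hilbert series is a genuine Laurent series that can be compared coefficient-wise, and one must keep straight the substitution $v \leftrightarrow t^{-1}$ relating $\grk$ to $h(\cdot)/h(R)$. Beyond that, the argument is a clean degree-by-degree dimension count with no deeper input.
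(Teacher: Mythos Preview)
Your proof is correct and is essentially the same argument as the paper's: a degree-by-degree dimension count showing that $\sum_i \dim_\Coef N_i^l = \dim_\Coef M^l = \sum_i \dim_\Coef (M_i/M_{i-1})^l$, whence each inclusion $N_i^l \hookrightarrow (M_i/M_{i-1})^l$ is an equality. The only difference is cosmetic: you package the count in Hilbert-series language (with the substitution $v \mapsto t^{-1}$), while the paper works directly with the graded pieces in three lines.
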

\begin{proof}
Let $N_i^l$ (resp.\ $M_i^l$, $M^l$) be the $l$-th graded piece of $N_i$.
Then we have $\sum_i\dim N_i^l = \dim M^l$ by the assumption and we have $\dim M^l = \sum \dim(M_i/M_{i - 1})^l$.
Hence $N_i^l = (M_i/M_{i - 1})^l$ for any $l$.
Therefore we have $N_i = M_i/M_{i - 1}$.
\end{proof}

\subsection{Soergel bimodules}
Let $s\in S$ and set $R^s = \{f\in R\mid s(f) = f\}$.
We define $B_s\in \mathcal{C}$ by $B_s = R\otimes_{R^s}R(1)$.
The $Q$-bimodule $(B_s)_Q^w$ is uniquely determined by the condition $\supp_W(B_s) = \{e,s\}$.
Fix $\delta_s\in V$ such that $\langle \alpha_s^\vee,\delta_s\rangle = 1$.
Then an explicit description of the $Q$-bimodule $(B_s)_Q^w$ is
\begin{equation}\label{eq:decomposition of B_s in Q}
\begin{split}
(B_s)_Q^e & = Q(\delta_s \otimes 1 - 1 \otimes s(\delta_s))\simeq Q_e,\\
(B_s)_Q^s & = Q(\delta_s \otimes 1 - 1 \otimes \delta_s)\simeq Q_s.
\end{split}
\end{equation}
The projection $B_s\to (B_s)_Q^e$ (resp.\ $B_s\to (B_s)_Q^s$) is given by $f\otimes g\mapsto fg$ (resp.\ $f\otimes g\mapsto fs(g)$).
To prove this description is straightforward using the following lemma.
\begin{lem}[{\cite[Claim~3.11]{MR3555156}}]\label{lem:basis over R^s}
We have $R = R^s\oplus \delta_s R^s$.
Therefore $B_s \simeq R(1)\oplus R(-1)$ as left or right $R$-modules.
\end{lem}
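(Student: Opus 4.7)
The plan is to establish the decomposition $R = R^s \oplus \delta_s R^s$ via the Demazure operator $\partial_s(f) = (f - s(f))/\alpha_s$, and then tensor with $R(1)$ to get the description of $B_s$.

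First I would unpack the action of $s$ on $R$. Since $\alpha_s^\vee \colon V \to \Coef$ is surjective with $\langle \alpha_s^\vee,\delta_s\rangle = 1$, we have a decomposition $V = \Coef\delta_s \oplus V'$ with $V' = \ker(\alpha_s^\vee)$, and the formula $s(v) = v - \langle\alpha_s^\vee,v\rangle\alpha_s$ shows that $s$ acts trivially on $V'$ and sends $\delta_s \mapsto \delta_s - \alpha_s$. Consequently $R = S(V')[\delta_s]$, the subalgebra $S(V')$ lies in $R^s$, and a direct computation on monomials $g\delta_s^n$ with $g\in S(V')$ shows that $f - s(f)$ is divisible by $\alpha_s$ in $R$ for every $f\in R$; hence $\partial_s \colon R \to R$ is well-defined.

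Next I would verify the two required claims. For existence of the decomposition, given $f\in R$ set $b = \partial_s(f)$ and $a = f - \delta_s b$; a short computation using $s(\alpha_s) = -\alpha_s$ and $s(\delta_s) = \delta_s - \alpha_s$ shows $s(b) = b$ and $s(a) = a$, giving $f = a + \delta_s b$ with $a,b \in R^s$. For uniqueness, suppose $a + \delta_s b = 0$ with $a,b \in R^s$; applying $s$ yields $a + (\delta_s - \alpha_s)b = 0$, so subtracting gives $\alpha_s b = 0$, and since $R = S(V)$ is an integral domain (as $V$ is $\Coef$-free and $\Coef$ is an integral domain) and $\alpha_s \neq 0$, this forces $b = 0$ and then $a = 0$. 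This proves $R = R^s \oplus \delta_s R^s$.

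Finally, since $R^s$ is commutative, the decomposition of $R$ as a (left or right) $R^s$-module yields $B_s = R \otimes_{R^s} R(1) \cong R \otimes_{R^s} R^s(1) \oplus R \otimes_{R^s} \delta_s R^s(1)$. The first summand is $R(1)$. For the second, multiplication by $\delta_s$ identifies $R^s$ with $\delta_s R^s$ up to a degree shift of $2$ (since $\deg(\delta_s) = 2$), so $\delta_s R^s(1) \cong R^s(-1)$ as graded $R^s$-modules, giving $R \otimes_{R^s} \delta_s R^s(1) \cong R(-1)$. Thus $B_s \cong R(1) \oplus R(-1)$ as left $R$-modules, and the same argument with the roles of the two tensor factors reversed handles the right $R$-module statement.

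The main obstacle is the well-definedness of $\partial_s$ — i.e., verifying that $\alpha_s$ divides $f - s(f)$ in $R$ without assuming $2$ is invertible — but this follows cleanly from the presentation $R = S(V')[\delta_s]$.
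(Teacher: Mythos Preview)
Your proof is correct. The paper does not supply its own argument for this lemma; it simply cites it as Claim~3.11 of Elias--Williamson~\cite{MR3555156}. Your approach via the Demazure operator $\partial_s$ is the standard one and is entirely consistent with the paper's later use of $\partial_s$ (in the definition of the maps $i_0^s$, $i_1^s$ in Section~\ref{sec:Light leaves}), so your verification that $\alpha_s$ divides $f - s(f)$ in $R$ over a general noetherian integral domain $\Coef$ in fact fills in a point the paper takes for granted.
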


Let $M\in \mathcal{C}$.
Then we have $B_s\otimes M\in \mathcal{C}$.
We have $(B_s\otimes M)_Q^w = \bigoplus_{xy = w}(B_s)_Q^x\otimes M_Q^y = Q_e\otimes_Q M_Q^w\oplus Q_s\otimes_Q M_Q^{sw}$.
We have $Q_e\otimes M_Q^w\simeq M_Q^w$ and $Q_s\otimes M_Q^{sw}\simeq M_Q^{sw}$ as right $Q$-modules.
As left $Q$-modules, we have $Q_e\otimes M_Q^w\simeq M_Q^w$ but $Q_s\otimes M_Q^{sw}$ is not isomorphic to $M_Q^{sw}$.
The action on the left hand side is twisted by $s$, namely the action of $f\in R$ on $Q_s\otimes M_Q^{sw}$ is equal to the action of $s(f)$ on $M_Q^{sw}$.

As $R$-bimodules, $B_s\otimes M = R\otimes_{R^s}M$.
In terms of this description, $(B_s\otimes M)_Q^w$ is given as follows.
\begin{lem}\label{lem:stalk of B_s otimes M}
Let $s\in S$ and $M\in \mathcal{C}$.
\begin{enumerate}
\item The projection $R\otimes_{R^s}M = B_s\otimes M\to (B_s\otimes M)_Q^w = M_Q^w\oplus M_Q^{sw}$ is given by $f\otimes m_w\mapsto (fm,s(f)m_{sw})$.
\item We have $(B_s\otimes M)_Q^w = \{(\delta_s\otimes m - 1\otimes s(\delta_s)m) + (\delta_s\otimes m' - 1\otimes\delta_s m')\mid m\in M_Q^w,m'\in M_Q^{sw}\}$.
\end{enumerate}
\end{lem}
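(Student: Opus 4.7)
The plan is a direct computation from the description of $(B_s)_Q$ in equation~\eqref{eq:decomposition of B_s in Q}. The key auxiliary identity, which follows from $\delta_s + s(\delta_s) = 2\delta_s - \alpha_s \in R^s$, is
\[
\alpha_s \otimes 1 = (\delta_s \otimes 1 - 1 \otimes s(\delta_s)) + (\delta_s \otimes 1 - 1 \otimes \delta_s)
\]
in $B_s = R \otimes_{R^s} R$. Writing $v_e$ and $v_s$ for the two summands, inverting $\alpha_s$ inside $(B_s)_Q$ gives $1 \otimes 1 = \alpha_s^{-1}(v_e + v_s)$.

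For part (1), use $B_s \otimes M = R \otimes_{R^s} M$ as $R$-bimodules and write $f \otimes m = (f \otimes 1) \otimes_R m$ inside $B_s \otimes_R M$. Expanding the first tensor factor via the identity above and decomposing $m = \sum_x m_x$ along $M_Q = \bigoplus_x M_Q^x$, one obtains
\[
f \otimes m = \sum_x \bigl( f\alpha_s^{-1} v_e \otimes m_x + f\alpha_s^{-1} v_s \otimes m_x \bigr)
\]
in $(B_s \otimes M)_Q$. Since $v_e \in (B_s)_Q^e$ and $v_s \in (B_s)_Q^s$, the first type of term lies in $(B_s \otimes M)_Q^x$ while the second lies in $(B_s \otimes M)_Q^{sx}$; collecting the $w$-components gives $f\alpha_s^{-1} v_e \otimes m_w + f\alpha_s^{-1} v_s \otimes m_{sw}$. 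One finishes by applying the identifications $(B_s)_Q^e \otimes_Q M_Q^w \xrightarrow{\sim} M_Q^w$ via $qv_e \otimes n \mapsto qn$ and $(B_s)_Q^s \otimes_Q M_Q^{sw} \xrightarrow{\sim} M_Q^{sw}$ via $qv_s \otimes n \mapsto s(q) n$. The $s$-twist in the second identification is forced by the right action $q \cdot h = s(h)q$ on $(B_s)_Q^s \simeq Q_s$, and it is exactly what produces the factor $s(f)$ in the formula $(fm_w, s(f)m_{sw})$.

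For part (2), since $(B_s)_Q^e = Qv_e$ and $(B_s)_Q^s = Qv_s$ as rank-one free left $Q$-modules, every element of
\[
(B_s \otimes M)_Q^w = (B_s)_Q^e \otimes_Q M_Q^w \oplus (B_s)_Q^s \otimes_Q M_Q^{sw}
\]
can be written as $v_e \otimes m + v_s \otimes m'$ with $m \in M_Q^w$ and $m' \in M_Q^{sw}$, after absorbing the left $Q$-scalar into the right factor (with an $s$-twist in the second summand as above). Translating back via the isomorphism $B_s \otimes_R M \simeq R \otimes_{R^s} M$ rewrites $v_e \otimes m$ as $\delta_s \otimes m - 1 \otimes s(\delta_s)m$ and $v_s \otimes m'$ as $\delta_s \otimes m' - 1 \otimes \delta_s m'$, giving the set in the statement.

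The main (modest) obstacle is bookkeeping for the $s$-twist: $(B_s)_Q^s$ is a copy of $Q_s$ rather than $Q_e$, so tensoring it with $M_Q^{sw}$ over $Q$ inserts an $s$ on the scalar from the left-hand factor. Everything else is a routine rearrangement of tensor-product notation once the decomposition \eqref{eq:decomposition of B_s in Q} is in hand.
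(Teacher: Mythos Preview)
The paper does not supply a proof; the lemma is recorded as an immediate consequence of the projections $B_s\to (B_s)_Q^e$, $f\otimes g\mapsto fg$, and $B_s\to (B_s)_Q^s$, $f\otimes g\mapsto fs(g)$, described just before it. From that viewpoint part~(1) is a one-liner: the projection $B_s\otimes M\to Q_e\otimes_Q M_Q^w\simeq M_Q^w$ sends $(f\otimes 1)\otimes m\mapsto f\otimes m_w\mapsto fm_w$, and the projection $B_s\otimes M\to Q_s\otimes_Q M_Q^{sw}\simeq M_Q^{sw}$ sends $(f\otimes 1)\otimes m\mapsto f\otimes m_{sw}\mapsto s(f)m_{sw}$.

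Your route through the identity $\alpha_s\otimes 1 = v_e + v_s$ is fine in principle, and your argument for part~(2) is correct. But for part~(1) there is a normalization slip: the identifications you write down, $qv_e\otimes n\mapsto qn$ and $qv_s\otimes n\mapsto s(q)n$, are \emph{not} the ones implicit in the lemma. Under the paper's projection formulas one has $v_e\mapsto \delta_s - s(\delta_s)=\alpha_s\in Q_e$ and $v_s\mapsto \delta_s - s(\delta_s)=\alpha_s\in Q_s$, not $v_e,v_s\mapsto 1$. If you carry your own computation to the end with your stated identifications, the $w$-component $f\alpha_s^{-1}v_e\otimes m_w + f\alpha_s^{-1}v_s\otimes m_{sw}$ becomes
\[
\bigl(f\alpha_s^{-1}m_w,\; s(f\alpha_s^{-1})m_{sw}\bigr)=\bigl(f\alpha_s^{-1}m_w,\; -s(f)\alpha_s^{-1}m_{sw}\bigr),
\]
which is not the formula $(fm_w,\,s(f)m_{sw})$. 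Switching to the paper's normalization (so that $v_e,v_s\mapsto\alpha_s$) the stray $\alpha_s^{-1}$ cancels and the claimed formula follows; alternatively, bypass $v_e,v_s$ entirely and use the projection maps directly as above.
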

The following is easy, for example from the above lemma.
\begin{lem}\label{lem:support of B_s otimes}
Let $s\in S$ and $M\in \mathcal{C}$.
We have $(B_s\otimes M)_{Q}^w + (B_s\otimes M)_{Q}^{sw} = B_s\otimes_R M_Q^w + B_s\otimes_R M_Q^{sw}$.
\end{lem}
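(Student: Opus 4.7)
The plan is to prove the two inclusions separately, with the reverse inclusion being the substantive one.

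For the inclusion $B_s\otimes_R M_Q^w + B_s\otimes_R M_Q^{sw}\subset (B_s\otimes M)_Q^w + (B_s\otimes M)_Q^{sw}$, I would argue directly from the behaviour of support under tensor product. Since $\supp_W(B_s)=\{e,s\}$, Lemma~\ref{lem:support of the tensor product} gives $\supp_W(B_s\otimes_R M_Q^w)\subset \{e,s\}\cdot\{w\}=\{w,sw\}$, and likewise $\supp_W(B_s\otimes_R M_Q^{sw})\subset\{w,sw\}$. In other words, any element in either summand on the right of the equation has $W$-support contained in $\{w,sw\}$, so it lies in $(B_s\otimes M)_Q^w\oplus (B_s\otimes M)_Q^{sw}$.

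For the opposite inclusion, the key input is the explicit description in Lemma~\ref{lem:stalk of B_s otimes M}(2). Given an element $x\in (B_s\otimes M)_Q^w$, that lemma writes $x$ as a sum of two pieces: one of the form $\delta_s\otimes m-1\otimes s(\delta_s)m$ with $m\in M_Q^w$, and one of the form $\delta_s\otimes m'-1\otimes\delta_s m'$ with $m'\in M_Q^{sw}$. I would observe that these pieces are precisely the images in $B_s\otimes_R M=R\otimes_{R^s}M$ of the elements $(\delta_s\otimes 1-1\otimes s(\delta_s))\otimes m$ and $(\delta_s\otimes 1-1\otimes\delta_s)\otimes m'$ of $B_s\otimes M$, i.e.\ each piece already has the form $b\otimes_R m$ with $b\in B_s$ and $m$ in $M_Q^w$ or $M_Q^{sw}$ respectively. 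Hence $(B_s\otimes M)_Q^w\subset B_s\otimes_R M_Q^w+B_s\otimes_R M_Q^{sw}$, and the analogous argument (swapping the roles of $w$ and $sw$, noting $s\cdot sw=w$) handles $(B_s\otimes M)_Q^{sw}$.

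No real obstacles are expected; this is essentially bookkeeping on top of Lemma~\ref{lem:stalk of B_s otimes M}. The only point that deserves a sentence of care is the identification of the two natural realizations of $B_s\otimes M_Q$, namely $(R\otimes_{R^s}R)\otimes_R M_Q\simeq R\otimes_{R^s}M_Q$, under which the explicit generators from \eqref{eq:decomposition of B_s in Q} tensored with $m\in M_Q^w$ become exactly the generators appearing in the description of $(B_s\otimes M)_Q^w$.
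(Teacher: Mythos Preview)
Your proof is correct and follows exactly the route the paper has in mind: the paper merely says the lemma ``is easy, for example from the above lemma'' (Lemma~\ref{lem:stalk of B_s otimes M}), and your argument is precisely a fleshing-out of that remark, using part~(2) for one inclusion and the support behaviour for the other. One small technicality: Lemma~\ref{lem:support of the tensor product} is stated for objects of $\mathcal{C}$, and $M_Q^w$ is not one, so it is cleaner to justify the inclusion $B_s\otimes_R M_Q^w\subset (B_s\otimes M)_Q^w\oplus (B_s\otimes M)_Q^{sw}$ directly from Lemma~\ref{lem:stalk of B_s otimes M}(1) (the projection $f\otimes m\mapsto (fm_x,s(f)m_{sx})$ vanishes for $x\notin\{w,sw\}$ when $m\in M_Q^w$); this is the same idea and requires no extra work.
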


Let $\BSbimod$ be the full-subcategory of $\mathcal{C}$ such that the objects are $\{B_{s_1}\otimes\cdots\otimes B_{s_l}(n)\mid s_1,\dots,s_l\in S,n\in\Z\}$ and the category $\Sbimod$ is defined as a full-subcategory of $\mathcal{C}$ whose objects are direct summands of objects in the category $\BSbimod$.
Obviously these categories are stable under the tensor products.

For $\underline{x} = (s_1,\dots,s_l)\in S^l$, we put $B_{\underline{x}} = B_{s_1}\otimes\dotsm\otimes B_{s_l}$.
By Lemma~\ref{lem:support of the tensor product}, we get the following.
\begin{lem}
We have $\supp_W(B_{\underline{x}}) = \{s_1^{e_1}\dotsm s_l^{e_l}\mid e_i \in \{0,1\}\}$.
In particular, if $\underline{x}$ is a reduced expression of $x\in W$, then $\supp_W(B_{\underline{x}}) = \{y\in W\mid y\le x\}$.
\end{lem}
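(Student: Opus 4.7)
The plan is to prove the first assertion by induction on the word length $l$, and then deduce the second assertion from the subword characterization of the Bruhat order.

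For the base case $l = 0$ we have $B_{\underline{x}} = R_e$ and $\supp_W(R_e) = \{e\}$, which matches the empty product. For $l = 1$ the description of $(B_s)_Q^w$ given just after the definition of $B_s$ yields $\supp_W(B_s) = \{e, s\}$, matching $\{s^0, s^1\}$. For the inductive step, factor $B_{\underline{x}} = B_{s_1} \otimes B_{\underline{x}'}$ with $\underline{x}' = (s_2, \ldots, s_l)$, and apply Lemma~\ref{lem:support of the tensor product} to obtain
\[
\supp_W(B_{\underline{x}}) = \{ab : a \in \supp_W(B_{s_1}),\ b \in \supp_W(B_{\underline{x}'})\}.
\]
Feeding in $\supp_W(B_{s_1}) = \{e, s_1\}$ and the inductive description of $\supp_W(B_{\underline{x}'})$ gives exactly $\{s_1^{e_1}\dotsm s_l^{e_l} : e_i \in \{0,1\}\}$.

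For the second assertion, assume $\underline{x} = (s_1,\dots,s_l)$ is a reduced expression of $x$. I would invoke the subword property of the Bruhat order (Deodhar; see e.g.\ Bj\"orner--Brenti): an element $y \in W$ satisfies $y \le x$ if and only if $y$ can be written as a subword of any fixed reduced expression of $x$, i.e.\ $y = s_{i_1}\dotsm s_{i_k}$ for some $1 \le i_1 < \dots < i_k \le l$. Hence the set $\{s_1^{e_1}\dotsm s_l^{e_l} : e_i \in \{0,1\}\}$ of all (possibly non-reduced) subword evaluations is contained in $\{y \le x\}$, because any non-reduced subword can be shortened within itself to a reduced expression of the same element, which remains a subword of $s_1\dotsm s_l$. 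Conversely every $y \le x$ already arises as such a subword, so the two sets coincide.

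The content here is essentially routine: the first part is a clean induction on the tensor length using the already-established tensor product support formula, while the second part is purely a statement about Coxeter groups. The only thing one needs to be mindful of is that the description $\{s_1^{e_1}\dotsm s_l^{e_l}\}$ a priori involves non-reduced subwords and possibly repeats elements, so the matching with the Bruhat interval requires the full subword property (including its ``non-reduced subword'' form) rather than just the statement for reduced subexpressions. This is the only place where a classical result is invoked; no further obstacle arises.
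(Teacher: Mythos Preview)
Your proof is correct and follows the same approach as the paper, which simply states that the lemma follows from Lemma~\ref{lem:support of the tensor product} without further elaboration. You have merely spelled out the induction and the invocation of the subword property that the paper leaves implicit.
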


Since $B_s\otimes M \simeq M(-1) \oplus M(1)$ (resp.\ $M\otimes B_s\simeq M(-1)\oplus M(1)$) as a right (resp.\ left) $R$-module, we get the following.
\begin{lem}\label{lem:graded rank of B}
The module $B_{\underline{x}}$ is graded free as a left (resp.\ right) $R$-module and its graded rank is $(v + v^{-1})^l$.
\end{lem}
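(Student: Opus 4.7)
The plan is to induct on $l$. For the base case $l = 0$, $B_{()} = R_e = R$ is graded free of rank $1 = (v+v^{-1})^0$ on both sides, settling the claim.

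For the inductive step, to establish the left $R$-module assertion I would peel off the \emph{rightmost} factor, writing $B_{\underline{x}} = B_{(s_1,\ldots,s_{l-1})} \otimes B_{s_l}$, and apply the isomorphism $M \otimes B_{s_l} \simeq M(-1) \oplus M(1)$ of left $R$-modules that is recorded just before the lemma. This isomorphism comes from rewriting $M \otimes_R R \otimes_{R^{s_l}} R(1)$ as $M \otimes_{R^{s_l}} R(1)$ and then splitting off the factor $R$ via the decomposition $R = R^{s_l} \oplus \delta_{s_l} R^{s_l}$ of right $R^{s_l}$-modules, which is exactly Lemma~\ref{lem:basis over R^s}; the degree-$2$ shift coming from $\delta_{s_l}$ pairs with the built-in $(1)$ to produce the two summands $M(1)$ and $M(-1)$. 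Invoking the inductive hypothesis that $B_{(s_1,\ldots,s_{l-1})}$ is graded free as a left $R$-module of graded rank $(v+v^{-1})^{l-1}$ then yields graded rank $(v+v^{-1})(v+v^{-1})^{l-1} = (v+v^{-1})^l$ on the left.

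The right $R$-module assertion is proved symmetrically: peel off the \emph{leftmost} factor as $B_{\underline{x}} = B_{s_1} \otimes B_{(s_2,\ldots,s_l)}$, use the companion isomorphism $B_{s_1} \otimes M \simeq M(-1) \oplus M(1)$ of right $R$-modules, and apply the inductive hypothesis to $B_{(s_2,\ldots,s_l)}$.

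There is essentially no obstacle to this argument: the two key one-sided splittings of $B_s$ are already provided by Lemma~\ref{lem:basis over R^s}, and the remainder is a routine induction. The only matter requiring attention is that one must peel off a tensor factor from the \emph{correct} side — the right end when tracking the left $R$-structure, and the left end when tracking the right $R$-structure — so that the relevant one-sided isomorphism applies cleanly without having to engage the opposite-side action.
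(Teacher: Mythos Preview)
Your proposal is correct and is essentially the same as the paper's argument: the paper simply remarks, immediately before the lemma, that $M\otimes B_s\simeq M(-1)\oplus M(1)$ as a left $R$-module and $B_s\otimes M\simeq M(-1)\oplus M(1)$ as a right $R$-module (both consequences of Lemma~\ref{lem:basis over R^s}), and the result follows by the obvious induction you spell out. Your explanation of which side to peel off and why is exactly the content behind the paper's one-line justification.
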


\begin{lem}\label{lem:B_s is self-adjoint}
Let $M,N\in \mathcal{C}$ and $s\in S$.
Then $\Hom_{\mathcal{C}}(B_s\otimes M,N)\simeq \Hom_{\mathcal{C}}(M,B_s\otimes N)$.
\end{lem}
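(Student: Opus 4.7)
I would prove this by exhibiting $B_s$ as a self-dual object in $\mathcal{C}$, via explicit unit and counit morphisms. The key input is Lemma~\ref{lem:basis over R^s}: the decomposition $R = R^s \oplus \delta_s R^s$ makes $R$ a graded Frobenius extension of $R^s$, with trace given by the divided-difference operator $\partial_s(f) = (f - s(f))/\alpha_s$ (a degree-$(-2)$, $R^s$-bilinear map). A direct calculation using $\partial_s(\delta_s) = 1$, $\partial_s(\alpha_s) = 2$, $\partial_s(\delta_s^2) = 2\delta_s - \alpha_s$ produces the dual basis: $\{x_1, x_2\} = \{1, \delta_s\}$ has dual $\{y_1, y_2\} = \{\alpha_s - \delta_s, 1\}$, in the sense that $\partial_s(x_i y_j) = \delta_{ij}$.

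Writing $B_s \otimes B_s = R \otimes_{R^s} R \otimes_{R^s} R\,(2)$, I would define the counit and unit by
\[
\epsilon : B_s \otimes B_s \to R_e, \qquad \epsilon(a \otimes b \otimes c) = a\, \partial_s(b)\, c,
\]
\[
\eta : R_e \to B_s \otimes B_s, \qquad \eta(1) = \sum_i x_i \otimes 1 \otimes y_i = 1 \otimes 1 \otimes (\alpha_s - \delta_s) + \delta_s \otimes 1 \otimes 1.
\]
Both are degree-zero $R$-bimodule homomorphisms (well-definedness of $\epsilon$ over the $R^s$-balanced tensor uses $R^s$-bilinearity of $\partial_s$; $R$-centrality of $\eta(1)$ is the dual-basis identity). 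The two triangle identities $(\epsilon \otimes \mathrm{id}_{B_s}) \circ (\mathrm{id}_{B_s} \otimes \eta) = \mathrm{id}_{B_s}$ and $(\mathrm{id}_{B_s} \otimes \epsilon) \circ (\eta \otimes \mathrm{id}_{B_s}) = \mathrm{id}_{B_s}$ reduce, via the Leibniz rule $\partial_s(g\delta_s) = \partial_s(g)\delta_s + s(g)$ and the reconstruction formula $g = \partial_s(g)(\alpha_s - \delta_s) + \partial_s(g \delta_s)$, to the observation that $\partial_s(g\delta_s) \in R^s$, which lets one absorb the residual terms across the $R^s$-balance of the tensor. With both triangle identities in hand, the adjunction $\varphi \mapsto (\mathrm{id}_{B_s} \otimes \varphi) \circ (\eta \otimes \mathrm{id}_M)$ follows formally, provided $\eta$ and $\epsilon$ lie in $\mathcal{C}$.

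The remaining step is therefore to verify that $\eta$ and $\epsilon$ respect the $Q$-decomposition. This uses Lemma~\ref{lem:stalk of B_s otimes M} applied with $M = B_s$, together with the explicit formulas \eqref{eq:decomposition of B_s in Q} for $(B_s)_Q^e$ and $(B_s)_Q^s$. For $\eta$, one shows $\eta(1) \in (B_s \otimes B_s)_Q^e$ by computing its image under the projection to $(B_s \otimes B_s)_Q^s = (B_s)_Q^s \oplus (B_s)_Q^e$ given by $f \otimes m \mapsto (f m_s, s(f) m_e)$; the contributions of the two summands of $\eta(1)$ cancel. For $\epsilon$, one checks directly that $\epsilon$ vanishes on a typical element of $(B_s \otimes B_s)_Q^s$ furnished by Lemma~\ref{lem:stalk of B_s otimes M}, using the above values of $\partial_s$ on products of $\delta_s$'s. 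These hands-on verifications are where the main technical care is needed: each involves several terms that cancel only after systematically applying the $R^s$-moving relations in $R \otimes_{R^s} R \otimes_{R^s} R$, though the computations are short once organized.
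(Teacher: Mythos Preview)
Your proof is correct and takes a genuinely different route from the paper's.

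The paper constructs the bijection $\varphi\mapsto\psi$ directly: given $\varphi\colon B_s\otimes M\to N$, it sets $\psi(m)=1\otimes\varphi(1\otimes\delta m)-s(\delta)\otimes\varphi(1\otimes m)$, cites Libedinsky for the fact that this is an $R$-bimodule isomorphism, and then checks for \emph{arbitrary} $M,N$ that $\psi$ preserves the $Q$-decomposition if and only if $\varphi$ does, using the explicit description $(B_s\otimes M)_Q^w=a(M_Q^w)+b(M_Q^{sw})$ with $a(m)=\delta\otimes m-1\otimes s(\delta)m$, $b(m)=\delta\otimes m-1\otimes\delta m$.

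You instead exhibit $B_s$ as a self-dual object of the monoidal category $\mathcal{C}$ via the Frobenius unit/counit coming from the extension $R^s\subset R$, so that the adjunction follows formally once the two specific morphisms $\eta,\epsilon$ are shown to lie in $\mathcal{C}$. This is cleaner: the compatibility with the $Q$-decomposition is checked once, for two concrete maps between objects supported on $\{e,s\}$, rather than for the full correspondence depending on $M,N$. It also makes the rigidity of $B_s$ explicit. Two remarks: your $\epsilon$ is exactly the map $i_1^s$ that the paper introduces later in Section~\ref{sec:Light leaves}; and your hands-on verification that $\eta,\epsilon$ respect the decomposition can be shortened by noting (as the paper does for $i_1^s$) that $V$ is faithful on $\{e,s\}$, so \emph{any} $R$-bimodule map between objects with support in $\{e,s\}$ automatically lies in $\mathcal{C}$. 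The paper's approach, by contrast, has the advantage of giving the adjunction map explicitly in terms of $\varphi$ without passing through the unit.
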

\begin{proof}
We regard $B_s\otimes M = R\otimes_{R^s}M$ and $B_s\otimes N = R\otimes_{R^s}N$.
Take $\delta\in V$ such that $\langle\alpha_s^\vee,\delta\rangle = 1$.
Let $\varphi\colon B_s\otimes M\to N$ and define $\psi\colon M\to B_s\otimes N$ by $\psi(m) = 1\otimes\varphi(1\otimes \delta m) - s(\delta)\otimes\varphi(1\otimes m)$.
Then as in \cite[Lemma~3.3]{MR2441994}, $\psi$ is an $R$-bimodule homomorphism and this correspondence gives an isomorphism between the space of $R$-bimodule homomorphisms.
We prove that this correspondence preserves homomorphisms in $\mathcal{C}$.

Let $a(m) = 1\otimes \delta m - s(\delta)\otimes m $ and $b(m) = 1\otimes \delta m - \delta\otimes m$.
We have $a(m) = (\delta - (\delta + s(\delta)))\otimes m + 1\otimes \delta m = \delta\otimes m - 1\otimes (\delta + s(\delta))m + 1\otimes \delta m = \delta\otimes m - 1\otimes s(\delta)m$.
Hence $(B_s\otimes M)_Q^w = a(M_Q^w) + b(M_Q^{sw})$ by Lemma~\ref{lem:stalk of B_s otimes M}.
By the definition of $\psi$ and Lemma~\ref{lem:stalk of B_s otimes M}, the image of $\psi(m)$ in $(B_s\otimes N)_Q^w$ is $(\varphi(a(m))_w,\varphi(b(m))_{sw})$.
Therefore $\psi(m)$ is a homomorphism in $\mathcal{C}$ if and only if $\varphi(a(M_Q^w))_{y} = 0$ and $\varphi(b(M_Q^w))_{sy} = 0$ for any $w,y\in W$ such that $y\ne w$, if and only if $\varphi(a(M_Q^w))\subset N_Q^w$ and $\varphi(b(M_Q^{sw}))\subset N_Q^w$ for any $w\in W$, if and only if $\varphi$ is a homomorphism in $\mathcal{C}$.
\end{proof}

\subsection{The Hecke algebra}
In this paper, we use the following definition of the Hecke algebra.
Let $v$ be an indeterminate.
The $\Z[v^{\pm 1}]$-algebra $\mathcal{H}$ is generated by $\{H_w\mid w\in W\}$ and defined by the following relations.
\begin{itemize}
\item $(H_s - v^{-1})(H_s + v) = 0$ for any $s\in S$.
\item If $\ell(w_1) + \ell(w_2) = \ell(w_1w_2)$ for $w_1,w_2\in W$, we have $H_{w_1w_2} = H_{w_1}H_{w_2}$.
\end{itemize}
For $s\in S$, put $\underline{H}_s = H_s + v$ and for $\underline{x} = (s_1,\dots,s_l)\in S^l$, we put $\underline{H}_{\underline{x}} = \underline{H}_{s_1}\dotsm \underline{H}_{s_l}$.

It is known that $\{H_w\mid w\in W\}$ is a basis of a $\Z[v^{\pm 1}]$-module $\mathcal{H}$.
We define $p_{\underline{x}}^w\in \Z[v^{\pm 1}]$ by $\underline{H}_{\underline{x}} = \sum_{w\in W}p_{\underline{x}}^w H_w$.
\begin{lem}\label{lem:sum of p, for filtration}
We have $\sum_{w\in W}v^{\ell(w)}p_{\underline{x}}^w(v^{-1}) = (v + v^{-1})^{l}$.
\end{lem}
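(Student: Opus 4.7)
The plan is to evaluate the identity $\underline{H}_{\underline{x}} = \sum_{w\in W} p_{\underline{x}}^w H_w$ under a well-chosen one-dimensional representation of $\mathcal{H}$, namely the ``sign'' (or index) character that sends every $H_s$ to $v^{-1}$.

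First I would construct a $\Z[v^{\pm 1}]$-algebra homomorphism $\epsilon\colon \mathcal{H}\to \Z[v^{\pm 1}]$ with $\epsilon(H_w) = v^{-\ell(w)}$ for every $w\in W$. To see that this is well-defined it suffices to check the two defining relations listed for $\mathcal{H}$. The quadratic relation is immediate since $(v^{-1}-v^{-1})(v^{-1}+v)=0$. The length-additive relation $H_{w_1 w_2} = H_{w_1} H_{w_2}$ when $\ell(w_1)+\ell(w_2) = \ell(w_1 w_2)$ is sent to $v^{-\ell(w_1 w_2)} = v^{-\ell(w_1)}v^{-\ell(w_2)}$, which again holds automatically. (Alternatively, one could first verify that $H_s\mapsto v^{-1}$ respects the standard Coxeter presentation of $\mathcal{H}$, where the braid relations collapse in a commutative target, and then observe by induction on length that $\epsilon(H_w) = v^{-\ell(w)}$.)

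With $\epsilon$ in hand, $\epsilon(\underline{H}_s) = v^{-1} + v$, so by multiplicativity $\epsilon(\underline{H}_{\underline{x}}) = (v+v^{-1})^l$. Applying $\epsilon$ to $\sum_{w\in W} p_{\underline{x}}^w H_w$ gives $\sum_{w\in W} p_{\underline{x}}^w(v)\, v^{-\ell(w)}$, and equating the two expressions yields the identity
\[
\sum_{w\in W} p_{\underline{x}}^w(v)\, v^{-\ell(w)} = (v+v^{-1})^l
\]
in $\Z[v^{\pm 1}]$. Substituting $v \mapsto v^{-1}$ in this identity then gives $\sum_{w\in W} v^{\ell(w)} p_{\underline{x}}^w(v^{-1}) = (v+v^{-1})^l$, which is precisely the claim.

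There is no substantive obstacle here; the only conceptual point is recognizing that the desired sum is the image of $\underline{H}_{\underline{x}}$ under a well-defined character of $\mathcal{H}$. Once that is spotted, everything else is a direct calculation.
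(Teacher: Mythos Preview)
Your argument is correct and is essentially identical to the paper's own proof: both construct the algebra homomorphism $\mathcal{H}\to\Z[v^{\pm1}]$ sending $H_w\mapsto v^{-\ell(w)}$, apply it to $\underline{H}_{\underline{x}}=\sum_w p_{\underline{x}}^w H_w$, and then substitute $v\mapsto v^{-1}$. The paper simply omits the explicit verification of the defining relations that you spell out.
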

\begin{proof}
By the defining relations, $H_w\mapsto v^{-\ell(w)}$ gives an algebra homomorphism $\mathcal{H}\to \Z[v^{\pm 1}]$.
Applying this homomorphism to $\underline{H}_{\underline{x}} = \sum_{w\in W}p_{\underline{x}}^w H_w$, we get $(v + v^{-1})^l = \sum_{w\in W}p_{\underline{x}}^w(v)v^{-\ell(w)}$.
Replacing $v$ with $v^{-1}$, we get the lemma.
\end{proof}

\begin{lem}\label{lem:Q-dimension of BS module}
The dimension of $B_{\underline{x}}^w \otimes Q$ as a $Q$-vector space is $p_{\underline{x}}^w(1)$.
\end{lem}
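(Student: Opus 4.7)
The plan is to reduce both sides to the count
\[
N_w := \#\{(\epsilon_1,\dots,\epsilon_l) \in \{0,1\}^l \mid s_1^{\epsilon_1}\cdots s_l^{\epsilon_l} = w\}
\]
and then verify $\dim_Q(B_{\underline{x}}^w \otimes Q) = N_w = p_{\underline{x}}^w(1)$. Since the earlier lemma constructing $M^w$ gives $(M^w)\otimes_R Q = M_Q^w$, the left-hand side equals $\dim_Q(B_{\underline{x}})_Q^w$.

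For this dimension I would argue by induction on $l$. Writing $B_{\underline{x}} = B_{s_1} \otimes M$ with $M = B_{s_2}\otimes\cdots\otimes B_{s_l}$, the stalk formula $(B_s\otimes M)_Q^w = Q_e\otimes_Q M_Q^w \oplus Q_s\otimes_Q M_Q^{sw}$ recorded just before Lemma~\ref{lem:stalk of B_s otimes M} gives an isomorphism of right $Q$-modules
\[
(B_{\underline{x}})_Q^w \cong M_Q^w \oplus M_Q^{s_1 w},
\]
hence $\dim_Q(B_{\underline{x}})_Q^w = \dim_Q M_Q^w + \dim_Q M_Q^{s_1 w}$. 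Splitting $N_w$ according to the value of $\epsilon_1 \in \{0,1\}$ matches this decomposition, which completes the inductive step; the base case $l = 0$ is immediate with $B_{\underline{x}} = R_e$ and $N_w = \delta_{w,e}$.

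For the right-hand side I would specialize $v \mapsto 1$. The quadratic relation $(H_s - v^{-1})(H_s + v) = 0$ becomes $(H_s - 1)(H_s + 1) = 0$, and the length-additive relation $H_{w_1 w_2} = H_{w_1}H_{w_2}$ is preserved, so $v \mapsto 1$, $H_w \mapsto w$ defines a $\Z$-algebra homomorphism $\mathcal{H} \to \Z[W]$. Under it $\underline{H}_s = H_s + v \mapsto 1 + s$, hence
\[
\underline{H}_{\underline{x}}\Big|_{v = 1} = \prod_{i=1}^l (1 + s_i) = \sum_{\epsilon \in \{0,1\}^l} s_1^{\epsilon_1}\cdots s_l^{\epsilon_l}
\]
in $\Z[W]$, whose coefficient at $w$ is $N_w$. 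Since $\{H_w\}$ is a $\Z[v^{\pm 1}]$-basis of $\mathcal{H}$, the image of $\underline{H}_{\underline{x}} = \sum_w p_{\underline{x}}^w H_w$ has coefficient $p_{\underline{x}}^w(1)$ at $w$, giving $p_{\underline{x}}^w(1) = N_w$ and the lemma. No substantial obstacle is expected; the only point requiring care is correctly identifying $(B_{\underline{x}})_Q^w$ as an iterated right-$Q$-module sum via the formula above, which is where the labelling of sequences $\epsilon$ meets the indexing of $\underline{H}_{\underline{x}}$.
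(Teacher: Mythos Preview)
Your proposal is correct and follows essentially the same approach as the paper: both arguments specialize $v\mapsto 1$ to pass to $\Z[W]$ and use the stalk decomposition $(B_s\otimes M)_Q^w \simeq M_Q^w \oplus M_Q^{sw}$ to run an induction on $l$. The only cosmetic difference is that you introduce the explicit count $N_w$ as an intermediary, whereas the paper directly matches the two recursions $p_{(s,s_1,\dots,s_l)}^w(1) = p_{(s_1,\dots,s_l)}^w(1) + p_{(s_1,\dots,s_l)}^{sw}(1)$ and $\dim_Q(B_s\otimes M)_Q^w = \dim_Q M_Q^w + \dim_Q M_Q^{sw}$.
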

\begin{proof}
After specializing $v$ to $1$, $\mathcal{H}$ is isomorphic to the group algebra $\Z[W]$ via $\mathcal{H}\ni H_w\mapsto w\in \Z[W]$.
Therefore we have $(s_1 + 1)\dotsm (s_l + 1) = \sum_w p_{(s_1,\dots,s_l)}^w(1)w$.
Hence we have:
\[
p_{(s,s_1,\dots,s_l)}^w(1) = p_{(s_1,\dots,s_l)}^w(1) + p_{(s_1,\dots,s_l)}^{sw}(1).
\]
On the other hand, we have $(B_s\otimes M)_Q^w = M_Q^w\oplus M_Q^{sw}$ for any $M\in \mathcal{C}$.
Hence we have $\dim_Q(B_s\otimes M)^w_Q = \dim_Q(M_Q^w) + \dim_Q  (M_Q^{sw})$.
Now we get the lemma by induction on the length of $\underline{x}$.
\end{proof}

\subsection{Duality}
Let $M\in \mathcal{C}$.
Define a new module $D(M)\in \mathcal{C}'$ by
\begin{align*}
D(M) & = \Hom_{\text{-$R$}}(M,R),\\
D(M)_Q^w & = \Hom_{\text{-$Q$}}(M_Q^w,Q).
\end{align*}
Here $\Hom_{\text{-$R$}}$ means the space of homomorphisms as right $R$-modules and $R$-bimodule structure is given by $(afb)(m) = f(amb)$ for $f\in D(M)$, $a,b\in R$ and $m\in M$.
Since $M$ is a finitely generated right $R$-module and we have an isomorphism $M\otimes_R Q\simeq M\otimes_R Q$, we have $D(M)\otimes Q  = \Hom_{\text{-$Q$}}(M_Q,Q) = \bigoplus_{w\in W}D(M)_Q^w$.
Therefore $D(M)\in \mathcal{C}'$.
For $\varphi\in D(M)$ and $w\in W$, $\varphi_w$ is the restriction of $\Id\otimes \varphi\colon M\otimes Q = \bigoplus_{w\in W}M_Q^w\to Q$ to $M_Q^w$.

\begin{lem}\label{lem:duality on local}
We have $D(M^I)\simeq D(M)_I$ for any $I\subset W$.
\end{lem}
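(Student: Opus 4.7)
The plan is to construct the isomorphism as the pullback along the canonical surjection $\pi\colon M\twoheadrightarrow M^I$: given $\psi\in D(M^I)$, set $\pi^*\psi=\psi\circ\pi$. Two things need to be verified, namely that $\pi^*$ lands inside $D(M)_I\subset D(M)$ and that $\pi^*$ admits a two-sided inverse.

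For the first point, I would use the preceding lemma $(M^I)_Q=\bigoplus_{w\in I}M_Q^w$ to identify the $Q$-linear extension of $\pi$ with the projection $M_Q=\bigoplus_{w\in W}M_Q^w\twoheadrightarrow \bigoplus_{w\in I}M_Q^w$. Composing with $\Id\otimes\psi$ then immediately shows that $(\pi^*\psi)_w=0$ for $w\notin I$, so $\pi^*\psi\in D(M)_I$. For the inverse, given $\varphi\in D(M)_I$, the vanishing $\varphi_w=0$ for $w\notin I$ says that $\Id\otimes\varphi\colon M_Q\to Q$ kills $\bigoplus_{w\notin I}M_Q^w$ and therefore descends uniquely to a right $Q$-linear map $\widetilde\varphi_Q\colon (M^I)_Q\to Q$.

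The single point that genuinely requires an argument is that the restriction of $\widetilde\varphi_Q$ to $M^I\subset (M^I)_Q$ takes values in $R$ rather than merely in $Q$, so that it actually defines an element of $D(M^I)=\Hom_{\text{-$R$}}(M^I,R)$. This is where the surjectivity of $\pi$ enters: since $M^I=\pi(M)$, one has $\widetilde\varphi_Q(M^I)=\varphi(M)\subset R$. Once this is in place, $\varphi\mapsto \widetilde\varphi_Q|_{M^I}$ is visibly a two-sided inverse to $\pi^*$.

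Finally, compatibility with the decomposition in $\mathcal{C}'$ is automatic: for $w\in I$ the identification $(M^I)_Q^w=M_Q^w$ gives $D(M^I)_Q^w=\Hom_{\text{-$Q$}}(M_Q^w,Q)=D(M)_Q^w$, and by construction $\pi^*$ restricts to the identity on these stalks; stalks for $w\notin I$ vanish on both sides. The only real content of the proof is the image argument of the previous paragraph; everything else is formal manipulation of the defining diagrams.
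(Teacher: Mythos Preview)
Your proof is correct and follows essentially the same approach as the paper: both identify $D(M^I)$ with a submodule of $D(M)$ via pullback along the quotient $\pi\colon M\twoheadrightarrow M^I$ (whose kernel is $M_{W\setminus I}$) and then show that the image equals $D(M)_I$. The paper phrases this tersely as ``$\psi\in D(M)$ lies in $D(M^I)$ iff $\psi$ vanishes on $M_{W\setminus I}$, iff (since $R$ is a domain) $\Id\otimes\psi$ vanishes on $\bigoplus_{w\notin I}M_Q^w$, iff $\psi\in D(M)_I$'', whereas you spell out the inverse construction explicitly; the content is the same.
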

\begin{proof}
Since $M^I$ is a quotient of $M$, we have $D(M^I)\subset D(M)$ and $\psi\in D(M)$ is in $D(M^I)$ if and only if $\psi$ is zero on $M_{W\setminus I}$.
Since $R$ is an integral domain, $\psi$ is zero on $M_{W\setminus I}$ if and only if $\Id\otimes \psi$ is zero on $M_{W\setminus I}\otimes Q  = \bigoplus_{w\in W\setminus I}M_Q^w$.
Namely $\psi\in D(M)_I$.
\end{proof}

\begin{lem}\label{lem:graded rank of the dual}
Let $M\in \mathcal{C}$ and $w\in W$ and assume that $M^w$ is graded free as a left $R$-module.
\begin{enumerate}
\item The module $D(M)_w$ is also a graded free left $R$-module and its graded rank is given by $\grk(D(M)_w)(v) = \grk(M^w)(v^{-1})$.
\item We have $D(D(M)_w)\simeq M^w$.
\end{enumerate}
\end{lem}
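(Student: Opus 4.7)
The plan is to reduce everything to the rank-one case and then compute explicitly. First, Lemma~\ref{lem:duality on local} applied to $I=\{w\}$ gives $D(M)_w \simeq D(M^w)$, so I may replace $M$ by $M^w$. Since $M^w$ is graded free as a left $R$-module and has $\supp_W(M^w)\subset\{w\}$, the lemma immediately preceding Lemma~\ref{lem:filtration, graded rank} yields a decomposition
\[
M^w \simeq \bigoplus_i R_w(n_i) \quad \text{in } \mathcal{C},
\]
with $\grk(M^w)(v)=\sum_i v^{n_i}$. Since $D$ commutes with finite direct sums and with shifts (via the standard formula $D(N(n))=D(N)(-n)$), both statements reduce to proving $D(R_w)\simeq R_w$ as objects of $\mathcal{C}$.

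For this key step, I would work with an explicit basis. A right $R$-linear map $\varphi\colon R_w\to R$ is determined by $\varphi(1_w)\in R$, giving an abelian group identification $D(R_w)=\{\varphi_r\mid r\in R\}$, where $\varphi_r$ is the unique right $R$-linear map with $\varphi_r(1_w)=r$. A short calculation from the definition $(a\varphi b)(m)=\varphi(amb)$ together with $1_w\cdot b=w(b)$ in $R_w$ gives $a\cdot\varphi_r=\varphi_{w^{-1}(a)r}$ and $\varphi_r\cdot b=\varphi_{rb}$, and clearly $\deg\varphi_r=\deg r$. The map $\varphi_r\mapsto w(r)$ is then a degree-preserving $R$-bimodule isomorphism onto $R_w$: the left action becomes standard multiplication, and the right action becomes the $w$-twisted action $s\cdot b=w(b)s$. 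Since both $D(R_w)$ and $R_w$ have $(\cdot)_Q^{w'}$ concentrated at $w'=w$, this isomorphism automatically respects the decompositions and lives in $\mathcal{C}$.

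Assembling the pieces, $D(M)_w \simeq \bigoplus_i R_w(-n_i)$, so $D(M)_w$ is graded free as a left $R$-module with $\grk(D(M)_w)(v)=\sum_i v^{-n_i}=\grk(M^w)(v^{-1})$, proving (1). Applying $D$ one more time yields $D(D(M)_w)\simeq\bigoplus_i R_w(n_i)\simeq M^w$, which is (2). The only non-formal point, and the main source of potential confusion, is the twist: the naive identification $\varphi\leftrightarrow\varphi(1_w)$ realizes $D(R_w)$ as $R$ with the \emph{left} action twisted by $w^{-1}$ and the right action untwisted, which is not an object in $\mathcal{C}$. Composing with $w$ moves the twist to the right side and recovers $R_w$; once this bookkeeping is set up correctly, the rest of the argument is mechanical.
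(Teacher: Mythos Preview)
Your proof is correct and follows essentially the same approach as the paper: reduce via Lemma~\ref{lem:duality on local} to computing $D(M^w)$, decompose $M^w$ as a direct sum of shifted copies of $R_w$, and use $D(R_w(n))\simeq R_w(-n)$. The paper simply asserts this last isomorphism, whereas you verify it explicitly (correctly handling the twist), so your argument is a more detailed version of the same proof.
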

\begin{proof}
We may assume $M^w = R_w(n)$ for some $n\in \Z$.
Then $D(R_w(n))\simeq R_w(-n)$.
Since $D(M^w)\simeq D(M)_w$, we get the first part.
We also have $D(D(M^w))\simeq M^w$.
Therefore we have $D(D(M)_w)\simeq D(D(M^w))\simeq M^w$.
\end{proof}

\begin{lem}
We have $D(B_s\otimes M)\simeq B_s\otimes D(M)$ for any $M\in \mathcal{C}$ and $s\in S$.
In particular we have $D(B_{\underline{x}})\simeq B_{\underline{x}}$ for any $\underline{x}\in S^l$ and $D$ preserves $\BSbimod$ and $\Sbimod$.
\end{lem}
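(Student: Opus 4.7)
The strategy is to construct an explicit isomorphism exploiting the Frobenius structure of the extension $R^s \subset R$ supplied by Lemma~\ref{lem:basis over R^s}. Using $B_s = R \otimes_{R^s} R(1)$, so that $B_s \otimes M = R \otimes_{R^s} M(1)$ and $B_s \otimes D(M) = R \otimes_{R^s} D(M)(1)$, and letting $\partial_s(f) = (f - s(f))/\alpha_s$ denote the Demazure operator (a degree $-2$ trace satisfying $\partial_s(1) = 0$ and $\partial_s(\delta_s) = 1$), I would define
\[
\Psi \colon B_s \otimes D(M) \to D(B_s \otimes M), \qquad \Psi(a \otimes f)(x \otimes m) := f\bigl(\partial_s(ax)\, m\bigr),
\]
where $\partial_s(ax)\cdot m$ uses the left $R$-action on $M$.

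The verification splits into three parts. (1)~\emph{$\Psi$ is a well-defined $R$-bimodule morphism of degree $0$:} the $R^s$-balance in both tensor products follows from the $R^s$-linearity of $\partial_s$ and the commutativity of $R$, while right $R$-linearity of the target map is immediate from that of $f$; the degree $-2$ of $\partial_s$ absorbs the $(+1)$ shift of $B_s$ on the source and the $(-1)$ shift inherent in $D(B_s \otimes M) = D(R \otimes_{R^s} M)(-1)$. (2)~\emph{$\Psi$ is bijective:} using the $R^s$-basis $\{1, \delta_s\}$ of $R$, every element of $R \otimes_{R^s} D(M)$ is uniquely $1 \otimes f_1 + \delta_s \otimes f_\delta$, and every $\varphi \in D(R \otimes_{R^s} M)$ is determined by $\varphi(1 \otimes -),\, \varphi(\delta_s \otimes -) \in D(M)$; evaluating $\Psi$ on the two basis elements, using $\partial_s(\delta_s) = 1$ and $\partial_s(\delta_s^2) = \delta_s + s(\delta_s)$, yields an explicit invertible triangular transformation between these descriptions. (3)~\emph{$\Psi$ respects the decomposition defining $\mathcal{C}'$:} by Lemma~\ref{lem:stalk of B_s otimes M}(2), a generic element of $(B_s \otimes D(M))_Q^w$ has the form $(\delta_s \otimes g - 1 \otimes s(\delta_s) g) + (\delta_s \otimes g' - 1 \otimes \delta_s g')$ with $g \in D(M)_Q^w$ and $g' \in D(M)_Q^{sw}$; the identities $\partial_s(\delta_s x) - s(\delta_s) \partial_s(x) = x$ and $\partial_s(\delta_s x) - \delta_s \partial_s(x) = s(x)$ (both derived from $\delta_s - s(\delta_s) = \alpha_s$) show that $\Psi$ sends such an element to the functional $x \otimes m \mapsto g(xm) + g'(s(x) m)$, which vanishes on $(B_s \otimes M)_Q^y$ for $y \ne w$ (again by Lemma~\ref{lem:stalk of B_s otimes M}) and hence lies in $D(B_s \otimes M)_Q^w$.

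For the ``in particular'' statement, induct on the length of $\underline{x}$: the base case $D(R_e) \cong R_e$ is immediate, and $D(B_{s_1} \otimes B_{(s_2, \dots, s_l)}) \cong B_{s_1} \otimes D(B_{(s_2, \dots, s_l)}) \cong B_{\underline{x}}$ by the main isomorphism and the inductive hypothesis. Since $D$ is an additive contravariant functor, it sends a direct summand $X$ of $B_{\underline{x}}(n)$ to a direct summand $D(X)$ of $D(B_{\underline{x}})(-n) \cong B_{\underline{x}}(-n)$, so $D$ preserves $\BSbimod$ and hence $\Sbimod$. The main obstacle is part~(3): the underlying bimodule map $\Psi$ is a clean Frobenius-type formula, but the additional $\mathcal{C}'$-structure must be tracked by hand through the explicit descriptions of Lemma~\ref{lem:stalk of B_s otimes M}, and the two identities for $\partial_s(\delta_s x)$ above are precisely what make the decompositions match on the two sides.
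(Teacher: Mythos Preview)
Your proposal is correct and follows the same overall strategy as the paper: construct an explicit $R$-bimodule isomorphism, verify bijectivity via the $R^s$-basis $\{1,\delta_s\}$ of $R$, and check compatibility with the $(\cdot)_Q^w$-decomposition by hand through Lemma~\ref{lem:stalk of B_s otimes M}. The one difference is cosmetic but pleasant: the paper builds the \emph{inverse} map $\Phi\colon D(B_s\otimes M)\to B_s\otimes D(M)$ by the ad hoc formula $\Phi(\varphi)=1\otimes\varphi(\delta_s\otimes-)-s(\delta_s)\otimes\varphi(1\otimes-)$ and must verify left $R$-linearity separately on $R^s$ and on the element $\delta_s$, whereas your Demazure-operator formula $\Psi(a\otimes f)(x\otimes m)=f(\partial_s(ax)\,m)$ packages the same isomorphism as the Frobenius trace pairing for $R^s\subset R$, so that $R$-bilinearity is immediate from the $R^s$-linearity of $\partial_s$, and the two identities $\partial_s(\delta_s x)-s(\delta_s)\partial_s(x)=x$ and $\partial_s(\delta_s x)-\delta_s\partial_s(x)=s(x)$ replace the paper's explicit computation of $\Phi(\delta_s\varphi)$. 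The verification that the map respects the $\mathcal{C}'$-structure, and the induction for $D(B_{\underline{x}})\simeq B_{\underline{x}}$, are carried out identically in both arguments.
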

\begin{proof}
We regard $B_s\otimes (\cdot) = R\otimes_{R^s}(\cdot)$.

Take $\delta\in V$ such that $\langle\alpha_s^\vee,\delta\rangle = 1$.
For $\varphi\in D(B_s\otimes M)$, define $\varphi_1,\varphi_2\colon M\to R$ by $\varphi_1(m) = \varphi(1\otimes m)$ and $\varphi_2(m) = \varphi(\delta\otimes m)$.
It is clear that these are right $R$-homomorphisms, hence defines elements of $D(M)$.
Set $\Phi(\varphi) = 1\otimes \varphi_2 - s(\delta)\otimes \varphi_1\in B_s\otimes D(M)$.

We prove that $\Phi\colon D(B_s\otimes M)\to B_s\otimes D(M)$ is an $R$-bimodule homomorphism.
Clearly we have $(\varphi f)_1 = \varphi_1 f$ and $(\varphi f)_2 = \varphi_2 f$, hence $\Phi$ is a right $R$-module homomorphism.
If $f\in R^s$, then $(f\varphi)_1(m) = \varphi(f\otimes m) = \varphi(1\otimes fm) = (f(\varphi)_1)(m)$.
Hence $(f\varphi)_1 = f\varphi_1$ and similarly we have $(f\varphi)_2 = f\varphi_2$.
Therefore $\Phi(f\varphi) = 1\otimes f\varphi_2 - s(\delta)\otimes f\varphi_1 = f(1\otimes \varphi_2 - s(\delta)\otimes \varphi_1) = f\Phi(\varphi)$.
We prove $\Phi(\delta\varphi) = \delta\Phi(\varphi)$.
We have $((\delta\varphi)_2)(m) = \varphi(\delta^2\otimes m) = \varphi(\delta(\delta + s(\delta))\otimes m - \delta s(\delta)\otimes m) = \varphi(\delta\otimes (\delta + s(\delta))m) - \varphi(1\otimes \delta s(\delta)m) = \varphi_2((\delta + s(\delta))m) - \varphi_1(\delta s(\delta)m) = ((\delta + s(\delta))\varphi_2 - \delta s(\delta)\varphi_1)(m)$.
We also have $(\delta\varphi)_1 = \varphi_2$.
Hence $\Phi(\delta \varphi) = 1\otimes ((\delta + s(\delta))\varphi_2 - \delta s(\delta)\varphi_1) - s(\delta)\otimes \varphi_2 = (\delta + s(\delta))\otimes \varphi_2 - \delta s(\delta)\otimes \varphi_1 - s(\delta)\otimes \varphi_2 = \delta(1\otimes \varphi_2 - s(\delta)\otimes\varphi_1) = \delta\Phi(\varphi)$.
By Lemma~\ref{lem:basis over R^s}, $\Phi$ is a left $R$-module, hence $R$-bimodule homomorphism.

We prove that $\Phi$ preserves the decomposition over $Q$.
Let $\varphi\in D(B_s\otimes M)_Q^w$.
Since $B_s\otimes M_Q^y\subset (B_s\otimes M)_Q^y + (B_s\otimes M)_Q^{sy}$ by Lemma~\ref{lem:support of B_s otimes}, $\varphi_1,\varphi_2$ is zero on $M_Q^y$ if $y\ne w,sw$.
Hence $\supp_W(\varphi_1),\supp_W(\varphi_2)\subset \{w,sw\}$.
Therefore $\supp_W(\Phi(\varphi))\subset \{w,sw\}$.

The homomorphism $\varphi$ is zero on $(B_s\otimes M)^{sw}_Q$.
By Lemma~\ref{lem:stalk of B_s otimes M}, $\varphi(\delta \otimes m - 1\otimes s(\delta) m) = 0$ for $m\in M_Q^{sw}$.
Namely we have $(\varphi_2)_{sw} = s(\delta)(\varphi_1)_{sw}$.
Similarly we have $(\varphi_2)_w = \delta(\varphi_1)_w$.
The image of $\Phi(\varphi)$ in $(B_s\otimes D(M))_Q^{sw} = D(M)_Q^{sw}\oplus D(M)_Q^{w}$ is $((\varphi_2)_{sw} - s(\delta)(\varphi_1)_{sw},(\varphi_2)_w - \delta(\varphi_1)_w)$ and this is zero.
Therefore $\supp_W(\Phi(\varphi))\subset \{w\}$.

Since $B_s = 1\otimes R\oplus \delta_s\otimes R$, we have $D(B_s\otimes M) = D(1\otimes M\oplus \delta_s\otimes M) = D(1\otimes M)\oplus D(\delta_s\otimes M)$.
Then $\Phi$ sends $D(1\otimes M)$ (resp.\ $D(\delta_s\otimes M)$) to $s(\delta)\otimes D(M)$ (resp.\ $1\otimes D(M)$).
Since $B_s\otimes D(M) = s(\delta)\otimes D(M)\oplus 1\otimes D(M)$, we proved that $\Phi$ is an isomorphism.
\end{proof}

\section{Light leaves}\label{sec:Light leaves}
\subsection{Notation}
Let $\underline{x} \in S^l$ and $\boldsymbol{e} = (\boldsymbol{e}_1,\dots,\boldsymbol{e}_l)\in \{0,1\}^l$.
We set $\underline{x}^{\boldsymbol{e}} = s_1^{\boldsymbol{e}_1}\dots s_l^{\boldsymbol{e}_l}\in W$.
Let $x_0 = 1,x_1 = s_1^{\boldsymbol{e}_1},x_2 = s_1^{\boldsymbol{e}_1}s_2^{\boldsymbol{e}_2},\dots,x_l = s_1^{\boldsymbol{e}_1}\dotsm s_l^{\boldsymbol{e}_l} = \underline{x}^{\boldsymbol{e}}$.
Using this sequence, we add a label to $\boldsymbol{e}$ at each index.
We assign $U$ to the index $i$ if $x_{i - 1}s_i > x_{i - 1}$ and $D$ otherwise.
The \emph{defect} $d(\boldsymbol{e})$ of $\boldsymbol{e}$ is defined by
\[
d(\boldsymbol{e}) = \#\{i\mid \text{the label is $U$ at $i$}, \boldsymbol{e}_i = 0\} - \#\{i\mid \text{the label is $D$ at $i$}, \boldsymbol{e}_i = 0\}.
\]
Of course, this number depends on $\underline{x}$, not only on $\boldsymbol{e}$.
\begin{lem}[{\cite[Lemma~2.7]{MR3555156}}]\label{lem:defect formula}
We have $p_{\underline{x}}^w(v) = \sum_{\underline{x}^{\boldsymbol{e}} = w}v^{d(\boldsymbol{e})}$.
\end{lem}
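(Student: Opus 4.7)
The plan is to prove the identity by induction on $l$, matching a recursion on the combinatorial side with a recursion on the Hecke algebra side obtained from the right factorization $\underline{H}_{\underline{x}} = \underline{H}_{(s_1,\dots,s_{l-1})} \cdot \underline{H}_{s_l}$.

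The base case $l = 0$ is immediate: only the empty sequence $\boldsymbol{e}$ contributes, and $\underline{x}^{\boldsymbol{e}} = e$ with defect $0$, so both sides equal $\delta_{w,e}$.

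For the inductive step, set $\underline{x}' = (s_1,\dots,s_{l-1})$ and $s = s_l$. Given $\boldsymbol{e} = (\boldsymbol{e}_1,\dots,\boldsymbol{e}_l)$, write $\boldsymbol{e}' = (\boldsymbol{e}_1,\dots,\boldsymbol{e}_{l-1})$. Then $\underline{x}^{\boldsymbol{e}} = \underline{x}'^{\boldsymbol{e}'} \cdot s^{\boldsymbol{e}_l}$ and $x_{l-1} = \underline{x}'^{\boldsymbol{e}'}$; the label at position $l$ is $U$ iff $x_{l-1} s > x_{l-1}$. Reading off the defect contribution of the last index, I obtain
\[
\sum_{\underline{x}^{\boldsymbol{e}} = w} v^{d(\boldsymbol{e})} = v^{\epsilon(w)} \sum_{\underline{x}'^{\boldsymbol{e}'} = w} v^{d(\boldsymbol{e}')} + \sum_{\underline{x}'^{\boldsymbol{e}'} = ws} v^{d(\boldsymbol{e}')},
\]
where $\epsilon(w) = +1$ if $ws > w$ and $\epsilon(w) = -1$ if $ws < w$. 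By the induction hypothesis, the right-hand side equals $v^{\epsilon(w)} p_{\underline{x}'}^w(v) + p_{\underline{x}'}^{ws}(v)$. (The two summands correspond respectively to $\boldsymbol{e}_l = 0$ and $\boldsymbol{e}_l = 1$.)

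On the algebraic side, I expand $\underline{H}_{\underline{x}} = \bigl(\sum_y p_{\underline{x}'}^y H_y\bigr)(H_s + v)$ using the quadratic relation $H_y H_s = H_{ys}$ when $ys > y$, and $H_y H_s = H_{ys} + (v^{-1}-v) H_y$ when $ys < y$. Taking coefficients of $H_w$, the term $y = ws$ always contributes $p_{\underline{x}'}^{ws}$, and the terms with $y = w$ contribute $vp_{\underline{x}'}^w$ from the $vH_y$ piece, plus $(v^{-1}-v)p_{\underline{x}'}^w$ when $ws < w$. Collecting cases yields $p_{\underline{x}}^w(v) = v^{\epsilon(w)} p_{\underline{x}'}^w(v) + p_{\underline{x}'}^{ws}(v)$, matching the combinatorial recursion and closing the induction.

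The only delicate point is the case split $ws > w$ vs $ws < w$; the cancellation $(v^{-1}-v) + v = v^{-1}$ in the latter case is precisely what makes the coefficient on the algebraic side agree with $v^{-1} = v^{\epsilon(w)}$ predicted by the combinatorial side. Everything else is bookkeeping.
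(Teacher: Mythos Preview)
Your proof is correct. The paper does not give its own proof of this lemma; it simply cites \cite[Lemma~2.7]{MR3555156}, so there is nothing in the paper to compare against. Your induction on $l$ via the right factorization $\underline{H}_{\underline{x}} = \underline{H}_{\underline{x}'}\underline{H}_{s_l}$ is the standard argument: the combinatorial recursion you derive from the defect definition and the algebraic recursion from the quadratic relation both yield $p_{\underline{x}}^w = v^{\epsilon(w)} p_{\underline{x}'}^w + p_{\underline{x}'}^{ws}$, and your case analysis (including the cancellation $(v^{-1}-v)+v=v^{-1}$ when $ws<w$) is accurate in the paper's normalization $(H_s - v^{-1})(H_s + v) = 0$.
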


\subsection{Assumption}
For $\underline{x}\in S^l$, define $u_{\underline{x}}\in B_{\underline{x}}$ by $u_{\underline{x}} = (1\otimes 1)\otimes (1\otimes 1)\otimes\dotsm\otimes (1\otimes 1)$.
In the rest of this paper, we assume the following.
\begin{assump}\label{assump:existecne of 2-coloerd map}
For any $s,t\in S$ such that $s\ne t$ and the order $m$ of $st$ is finite, we have the following.
Set $\underline{x} = (s,t,\dots)\in S^m$ and $\underline{y} = (t,s,\dots)\in S^m$.
Then there exists a zero-degree morphism $B_{\underline{x}}\to B_{\underline{y}}$ in $\mathcal{C}$ which sends $u_{\underline{x}}$ to $u_{\underline{y}}$.
\end{assump}

We have the following sufficient condition.

\begin{lem}
If the following condition holds for any $s,t\in S$ such that $st$ has the finite order, Assumption \ref{assump:existecne of 2-coloerd map} holds.
\begin{itemize}
\item Let $T'$ be the refractions in the group $\langle s,t\rangle$ generated by $\{s,t\}$.
For any $t_1,t_2\in T'$ with $t_1\ne t_2$ there exists $v\in V$ such that $\langle v,\alpha_{t_1}^\vee \rangle = 0$, $\langle v,\alpha_{t_2}^\vee\rangle = 1$.
\end{itemize}
\end{lem}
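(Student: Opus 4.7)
The statement is local at the pair $(s,t)$, so I would fix $s,t\in S$ with $st$ of finite order $m$ and set $W'=\langle s,t\rangle$. Both $\underline{x}$ and $\underline{y}$ are reduced expressions of the longest element of $W'$, so $\supp_W(B_{\underline{x}})=\supp_W(B_{\underline{y}})=W'$, and by Lemma~\ref{lem:Q-dimension of BS module} the stalks $(B_{\underline{x}})_Q^w$ and $(B_{\underline{y}})_Q^w$ have equal $Q$-dimensions for each $w\in W'$.

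The plan is to construct $\varphi$ in two stages. First, by iterating Lemma~\ref{lem:stalk of B_s otimes M} I would write down explicit bases of each stalk over $Q$ and check that, under a natural identification, the components $(u_{\underline{x}})_w$ and $(u_{\underline{y}})_w$ agree. This yields a $Q$-bimodule isomorphism $\widetilde{\varphi}\colon(B_{\underline{x}})_Q\xrightarrow{\sim}(B_{\underline{y}})_Q$ of degree zero that preserves the $W$-decomposition and satisfies $\widetilde{\varphi}(u_{\underline{x}})=u_{\underline{y}}$.

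The crux is to show $\widetilde{\varphi}(B_{\underline{x}})\subseteq B_{\underline{y}}$, so that $\widetilde{\varphi}$ restricts to a morphism $\varphi$ in $\mathcal{C}$. For this I would describe $B_{\underline{y}}$ inside its localization by a system of congruence conditions indexed by the reflections $t\in T'$: for each pair $w<tw$ in $W'$, the components $m_w$ and $m_{tw}$ must be congruent modulo $\alpha_t$ after the natural identification of stalks via the right $R$-action. This local characterization can be established by induction on $m$ using the right-$R$-basis of each $B_{s_i}$ provided by Lemma~\ref{lem:basis over R^s} and the stalk formulas of Lemma~\ref{lem:stalk of B_s otimes M}. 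The separating hypothesis is used precisely at this point: the functionals it supplies allow one to disentangle the congruences at distinct reflections, producing a reflection-intrinsic description that therefore matches for $B_{\underline{x}}$ and $B_{\underline{y}}$, so that $\widetilde{\varphi}$ preserves these conditions.

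The main obstacle is this integrality verification. An alternative organization uses the self-adjunction of Lemma~\ref{lem:B_s is self-adjoint} to induct on $m$, reducing the existence to a base case where the full strength of the separating hypothesis on the dihedral subgroup is invoked. Once integrality is confirmed, $\mathcal{C}$-compatibility, degree zero, and $\varphi(u_{\underline{x}})=u_{\underline{y}}$ all follow at once from the construction of $\widetilde{\varphi}$.
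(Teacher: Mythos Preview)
Your plan breaks at the very first step. You assert that $(B_{\underline{x}})_Q^w$ and $(B_{\underline{y}})_Q^w$ have the same $Q$-dimension for each $w\in W'$, and then build a $Q$-bimodule \emph{isomorphism} $\widetilde{\varphi}\colon (B_{\underline{x}})_Q\xrightarrow{\sim}(B_{\underline{y}})_Q$. This is false already for $m=3$. With $\underline{x}=(s,t,s)$ one computes in the Hecke algebra
\[
\underline{H}_{\underline{x}}=H_{sts}+vH_{st}+vH_{ts}+v^2H_t+(1+v^2)H_s+(v+v^3)H_e,
\]
so by Lemma~\ref{lem:Q-dimension of BS module} we have $\dim_Q(B_{\underline{x}})_Q^s=p_{\underline{x}}^s(1)=2$ and $\dim_Q(B_{\underline{x}})_Q^t=1$. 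By symmetry, for $\underline{y}=(t,s,t)$ one gets $\dim_Q(B_{\underline{y}})_Q^s=1$ and $\dim_Q(B_{\underline{y}})_Q^t=2$. Hence $(B_{\underline{x}})_Q$ and $(B_{\underline{y}})_Q$ are not isomorphic as objects of $\mathcal{C}'$, and no map as in Assumption~\ref{assump:existecne of 2-coloerd map} can arise by restricting an isomorphism of localizations. (Put differently: once the categorification is known, $B_{\underline{x}}\simeq B(w_0)\oplus B_s$ while $B_{\underline{y}}\simeq B(w_0)\oplus B_t$.) The second stage of your plan, characterizing $B_{\underline{y}}$ inside its localization by reflection congruences, also cannot work as stated: such congruences cut out the structure-algebra module $R(W')$, not the larger bimodule $B_{\underline{y}}$.

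The paper's proof takes a different route that avoids this obstruction. Using the separating hypothesis it first shows $V|_{W'}$ is faithful, so one only needs an $R$-bimodule map. Then it builds, for each $x\in W'$, the bimodule $R(\le x)$ of functions on $\bigcup_{y\le x}\Gr(y)$ and proves the rank-two splitting $R(\le x)\otimes_R B_s\simeq R(\le xs)(1)\oplus R(\le x\cap xs)(-1)$ whenever $xs>x$ (this is exactly where the hypothesis on the pairs $(t_1,t_2)$ enters). Iterating, $R(\le w_0)(\ell(w_0))$ is a common direct summand of $B_{\underline{x}}$ and $B_{\underline{y}}$, and the composite $B_{\underline{x}}\twoheadrightarrow R(\le w_0)(\ell(w_0))\hookrightarrow B_{\underline{y}}$ is the desired morphism; a short degree argument then shows it sends $u_{\underline{x}}$ to a unit times $u_{\underline{y}}$. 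The key conceptual point you are missing is that the $2m$-valent morphism is genuinely neither injective nor surjective for $m\ge 3$: it factors through the indecomposable summand $B(w_0)$, and one must construct that summand rather than an isomorphism.
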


Assume that $\Coef$ is a field and the representation $V$ restricted to $\langle s,t\rangle$ is reflection faithful.
Then Soergel's categorification theorem holds for $\langle s,t\rangle$.
Let $w_0$ be the longest element in $\langle s,t\rangle$.
Then there exists a corresponding indecomposable object $B(w_0)$ and the object $B(w_0)$ is a direct summand of $B_{\underline{x}}$ and $B_{\underline{y}}$.
Therefore we have a map $B_{\underline{x}}\twoheadrightarrow B(w_0)\hookrightarrow B_{\underline{y}}$ and after suitable normalization this gives a desired map of the assumption.
The proof of the above lemma basically follows this tactics.
In the proof below, we follow Soergel's argument in \cite{MR2329762}.

In the rest of this subsection, let $s,t\in S$ be as in the lemma and set $W' = \langle s,t\rangle$.
Let $T'$ be the set of reflections in $W'$.
\begin{lem}
The representation $V$ restricted to $W'$ is faithful.
\end{lem}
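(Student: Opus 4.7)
The plan is to show that any $w \in W'$ that acts trivially on $V$ must equal the identity, using the separation-of-coroots hypothesis of the preceding lemma as the only non-formal input.

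First I would verify a preliminary claim: \emph{distinct reflections in $T'$ induce distinct linear endomorphisms of $V$}. Suppose $t_1,t_2 \in T'$ act identically. The reflection formula gives $\langle v,\alpha_{t_1}^\vee\rangle \alpha_{t_1} = \langle v,\alpha_{t_2}^\vee\rangle \alpha_{t_2}$ for every $v \in V$. Since $\alpha_{t_i} = u_i(\alpha_{s_i})$ for some $u_i \in W$ with $s_i \in S$, these are nonzero, and picking any $v$ with $\langle v,\alpha_{t_1}^\vee\rangle \neq 0$ forces $\alpha_{t_2} \in \Coef^\times \alpha_{t_1}$; feeding this back yields that $\alpha_{t_1}^\vee$ and $\alpha_{t_2}^\vee$ are also proportional. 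This directly contradicts the hypothesis of the sufficient-condition lemma (which provides $v \in V$ where one coroot vanishes and the other equals $1$), so $t_1 = t_2$.

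Now suppose $w \in W'$ acts trivially on $V$. For any reflection $t \in T'$, the conjugate $wtw^{-1}$ lies in $T'$ and acts on $V$ by the same linear map as $t$ (because $w$ does nothing); by the preliminary claim $wtw^{-1} = t$ in $W'$. Hence $w$ commutes with every reflection, and since $\{s,t\}\subset T'$ generates $W'$, the element $w$ lies in the center of $W'$. Identifying $W'$ with the dihedral group of order $2m$, where $m$ is the order of $st$, its center is $\{e\}$ when $m$ is odd and $\{e,(st)^{m/2}\}$ when $m$ is even.

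The only remaining case is $m$ even with $w_0 := (st)^{m/2}$ acting trivially, which I rule out as follows: $w_0 s$ is a product of an odd number of generators in the dihedral group, hence a reflection in $T'$, and it is distinct from $s$ because $w_0 \neq e$ (as $st$ has order exactly $m$). If $w_0$ acted trivially on $V$, then $w_0 s$ and $s$ would act identically, contradicting the preliminary claim. Thus $w = e$ in every case, proving faithfulness. I do not expect a real obstacle here; the only place one has to be careful is the dihedral-center step, where one must exclude the long rotation by exhibiting a pair of distinct reflections with equal action.
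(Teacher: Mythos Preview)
Your argument is correct for $m \geq 3$, but there is a small gap at $m = 2$: in that case $W' \cong (\Z/2)^2$ is abelian, so its center is all of $W'$, not just $\{e,(st)^{m/2}\}$. Thus the reflections $s$ and $t$ themselves survive your center argument and must be excluded separately. This is easy---any reflection $t' \in T'$ acts by $v \mapsto v - \langle \alpha_{t'}^\vee,v\rangle\alpha_{t'}$ with $\alpha_{t'}$ and $\alpha_{t'}^\vee$ both nonzero, so it cannot act trivially---but it should be said. (Alternatively, note that your final paragraph already handles $w_0 = st$ when $m=2$, since then $w_0 s = sts = t \neq s$; so only $w \in \{s,t\}$ is missing.)

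As for the comparison with the paper: the paper splits on the parity of $\ell(w)$. For odd $\ell(w)$ it conjugates down to a simple reflection, which cannot act trivially by the remark above. For even $\ell(w)$ it writes $w = t_1t_2$ with $t_1,t_2 \in T'$, and the separation hypothesis produces $v$ with $t_1(v) = v \neq t_2(v)$, so $w(v) = t_1t_2(v) \neq v$. Your preliminary claim is exactly this even-length step in contrapositive form; your novelty is to replace the odd-length reduction by the center-of-the-dihedral argument. That is a perfectly reasonable alternative, though it trades a two-line induction for a structural fact about $D_{2m}$ and, as noted, requires the extra patch at $m=2$. The paper's route is a bit more self-contained.
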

\begin{proof}
Let $w\in W'$ and assume that $w$ acts trivially on $V$.
First assume that the length of $w$ is odd.
If $\ell(w) > 1$, then for $s'  = s$ or $t$, we have $\ell(sws) < \ell(w)$ and $sws$ is also acts trivially on $V$.
Therefore we conclude $s$ or $t$ acts trivially.
This contradicts to $\alpha_{s'}^\vee,\alpha_{s'}\ne 0$ for $s'\in \{s,t\}$.
Assume that $\ell(w)$ is even.
Then there exist $t_1,t_2\in T'$ such that $w = t_1t_2$.
If $w\ne 1$, then $t_1\ne t_2$.
Take $v\in V$ as in the assumption of the lemma.
Then $t_1(v) = v$ and $t_2(v) = v - \alpha_{t_2}\ne v$.
Therefore $t_1(v)\ne t_2(v)$.
Hence the action of $w$ is not trivial.
\end{proof}
Therefore, the decomposition $M^Q = \bigoplus_{x\in W'}M_x^Q$ is determined uniquely by the $R$-bimodule structure of $M$.
Hence it is sufficient to prove the existence of an $R$-bimodule homomorphism in the assumption.

We also note the following: if $t_1,t_2\in T'$ are different, then $\{\alpha_{t_1}^\vee,\alpha_{t_2}^\vee\}$ is linearly independent.
Namely, $\{\alpha_{t_1}^\vee,\alpha_{t_2}^\vee\}$ generates the two dimensional space over $\Frac(\Coef)$.
Note that this space is contained in $\Frac(\Coef)\alpha_s^\vee + \Frac(\Coef)\alpha_t^\vee$ and therefore it is equal.
Namely the two dimensional space does not depend on $t_1,t_2$.

For each $x\in W'$, set $\Gr(x) = \{(v,x^{-1}(v))\in V^*\times V^*\mid v\in V^*\}$ and for each $A\subset W'$, let $R(A)$ be the space of regular functions on $\bigcup_{x\in A}\Gr(x)$.
In other words, $R(A)$ is the image of $R\otimes_{\Coef} R$ in $R^{A}$ under the map $f\otimes g\mapsto (fx(g))_{x\in A}$, here we use an identification $\Gr(x)\simeq V^*$ via $(v,x^{-1}(v))\mapsto v$.
The embedding is an isomorphism after tensoring $Q$, namely we have $R(A)\otimes Q\simeq Q^A$ and this decomposition gives a structure of an object of $\mathcal{C}'$ to $R(A)$.
In this subsection, we consider an object of $\mathcal{C}'$ as graded right $R\otimes_{\Coef}R$-modules, rather than graded $R$-bimodules.

Let $R(A)^+$ be the image of $R\otimes R^s$ in $R(A)$.
\begin{lem}\label{lem:+ + tensor = all}
Assume that $As = A$.
Then $R(A)^{+}\otimes_{R^s}R\to R(A)$ given by $z\otimes f\mapsto z(1\otimes f)$ is an isomorphism.
\end{lem}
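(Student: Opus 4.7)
The plan is to use the decomposition $R = R^s \oplus \delta_s R^s$ from Lemma~\ref{lem:basis over R^s} to split both sides into two pieces that match under the given map. Writing $R(A)^+ \otimes_{R^s} R \simeq R(A)^+ \oplus (R(A)^+ \otimes_{R^s} \delta_s R^s)$, the map sends the first summand identically to $R(A)^+ \subset R(A)$, and sends the second summand (which is isomorphic to $R(A)^+$ via $z \otimes \delta_s g \mapsto zg$) onto $R(A)^+\cdot (1\otimes \delta_s) \subset R(A)$. So it suffices to prove that $R(A) = R(A)^+ \oplus R(A)^+\cdot(1\otimes\delta_s)$ inside $R(A)$.

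Surjectivity of $R(A)^+ + R(A)^+(1\otimes\delta_s) \to R(A)$ is immediate: $R\otimes_{\Coef} R = (R\otimes_{\Coef} R^s) \oplus (R\otimes_{\Coef} R^s)(1\otimes\delta_s)$ by Lemma~\ref{lem:basis over R^s}, so the image in $R(A)$ is exactly $R(A)^+ + R(A)^+(1\otimes\delta_s)$. The real content is showing the sum is direct. Here is where I would use the hypothesis $As = A$: any element of $R(A)^+$, being the image of some $f\otimes g$ with $g\in R^s$, takes the same value at $x$ and at $xs$, since the $x$-component is $f\cdot x(g)$ and $g\in R^s$ gives $x(g) = xs(g)$.

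So suppose $z_1 + z_2(1\otimes\delta_s) = 0$ with $z_i \in R(A)^+$. Evaluating the $x$- and $xs$-components (both allowed since $As=A$) and using the equality $z_i(x) = z_i(xs)$, subtraction yields
\[
z_2(x)\bigl(x(\delta_s) - xs(\delta_s)\bigr) = z_2(x)\cdot x(\alpha_s) = 0
\]
in $R$, using $\delta_s - s(\delta_s) = \alpha_s$. Since $R$ is an integral domain and $x(\alpha_s)\ne 0$ (as $\alpha_s\ne 0$ and the $W$-action on $R$ is by algebra automorphisms), we conclude $z_2(x)=0$ for every $x\in A$, hence $z_2=0$ and then $z_1=0$.

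I expect no real obstacle here; the only slightly delicate point is to make sure the identification of $R(A)^+ \otimes_{R^s} R$ with $R(A)^+ \oplus R(A)^+\cdot(1\otimes\delta_s)$ is carried out with the correct left/right conventions (the tensor is over the right $R^s$-action on $R(A)^+$ via $1\otimes R^s$, and the left $R^s$-action on $R$), but this is a routine check once the direct sum decomposition of $R(A)$ is established.
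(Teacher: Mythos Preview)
Your proof is correct and follows essentially the same approach as the paper: both use the decomposition $R = R^s \oplus \delta_s R^s$ to reduce to showing $R(A)^+ \cap R(A)^+(1\otimes\delta_s) = 0$ (equivalently, that the sum is direct), and both establish this by exploiting the $s$-invariance of elements of $R(A)^+$ under $x\leftrightarrow xs$ (guaranteed by $As=A$) to force $z_2(x)\,x(\alpha_s)=0$, hence $z_2=0$. The only cosmetic difference is that the paper phrases the key step as ``$z_1 = z_2(1\otimes\delta_s)$ implies $z_2=0$'' while you phrase it as ``$z_1 + z_2(1\otimes\delta_s)=0$ implies $z_1=z_2=0$''.
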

\begin{proof}
It is obvious that the map is surjective.
Therefore it is sufficient to prove that the map is injective.
Take $\delta_s\in V$ such that $\langle \alpha_s^\vee,\delta_s\rangle = 1$.
Then we have $R = R^s\oplus R^s\delta_s$.
Therefore it is sufficient to prove that $R(A)^{+}\cap R(A)^{+}(1\otimes \delta_s) = 0$.

Let $z_1,z_2\in R(A)^{+}$ and assume that $z_1 = z_2(1\otimes \delta_s)$.
Let $((z_i)_x)$ be the image of $z_i$ in $R^{A}$ for $i = 1,2$.
By $z_1 = z_2(1\otimes \delta_s)$, we have $(z_1)_x = (z_2)_{x}x(\delta_s)$.
By replacing $x$ with $xs$, we have $(z_1)_{xs} = (z_2)_{xs}xs(\delta_s)$.
Since $z_1,z_2$ are $s$-invariant, we have $(z_1)_{xs} = (z_1)_x$ and $(z_2)_{xs} = (z_2)_{x}$.
Therefore we get $(z_2)_{x}(x(\delta_s) - xs(\delta_s)) = 0$.
Hence $(z_2)_{x}x(\alpha_s) = 0$.
We get $(z_2)_{x} = 0$ for any $x\in A$ and this implies $z_2 = 0$.
\end{proof}

\begin{lem}\label{lem:Soergel's decomposition, rank 2}
Let $x\in W'$ and assume that $xs > x$.
Set $A = \{y\in W'\mid y\le x\}$.
Then $R(A)\otimes_{R}B_s\simeq R(A\cup As)(1)\oplus R(A\cap As)(-1)$.
\end{lem}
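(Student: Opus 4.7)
The plan is to decompose $R(A)\otimes_R B_s=R(A)\otimes_{R^s}R(1)$ by exhibiting the two summands on the right separately. First observe that both $A\cup As$ and $A\cap As$ are right-$s$-stable: for $A\cup As$ this is because $w\in A$ gives $ws\in As$ and $w=ys\in As$ gives $ws=y\in A$; for $A\cap As$, $w\in A\cap As$ gives $ws\in A$ and $(ws)s=w\in A$, so $ws\in A\cap As$. Hence Lemma~\ref{lem:+ + tensor = all} applies and yields $R(A\cup As)\simeq R(A\cup As)^+\otimes_{R^s}R$ together with $R(A\cap As)\simeq R(A\cap As)^+\otimes_{R^s}R$. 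A key preliminary is that the restriction map $R(A\cup As)^+\to R(A)^+$ is an isomorphism: injectivity is clear from $s$-symmetry (each $y\in As\setminus A$ has its $s$-partner $ys$ in $A$), and surjectivity comes from extending any $(h_y)_{y\in A}\in R(A)^+$ by the rule $h_y:=h_{ys}$ for $y\in As\setminus A$, with the extended tuple still lying in the image of $R\otimes R^s$.

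Composing this isomorphism with the inclusion $R(A)^+\hookrightarrow R(A)$ and tensoring over $R^s$ with $R$ produces a degree-zero embedding
\[
\phi_+\colon R(A\cup As)(1)\xrightarrow{\sim}R(A)^+\otimes_{R^s}R(1)\hookrightarrow R(A)\otimes_R B_s.
\]
For the complementary summand I would build a degree-zero map
\[
\phi_-\colon R(A\cap As)(-1)\to R(A)\otimes_R B_s
\]
using the $s$-antisymmetric element $p_-:=\delta_s\otimes 1-1\otimes\delta_s\in B_s$, which by~\eqref{eq:decomposition of B_s in Q} spans $(B_s)_Q^s$. Given $z\in R(A\cap As)$, one picks a lift $\tilde z\in R(A)$ along the surjection $R(A)\twoheadrightarrow R(A\cap As)$ and defines $\phi_-(z)$ as an appropriate product of $\tilde z$ with $p_-$ inside $R(A)\otimes_{R^s}R$, with the shifts arranged so the target lies in degree zero.

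To conclude that $\phi_+\oplus\phi_-$ is an isomorphism I would compare graded ranks as right $R$-modules, using Lemma~\ref{lem:graded rank of B} together with the decomposition $R=R^s\oplus R^s\delta_s$ from Lemma~\ref{lem:basis over R^s}, then verify injectivity after tensoring with $Q$: by Lemma~\ref{lem:stalk of B_s otimes M} both sides decompose into matching stalks of total $Q$-dimension $2|A|=|A\cup As|+|A\cap As|$, and the resulting $Q$-linear isomorphism can be written down explicitly at each $w\in A\cup As$. Torsion-freeness of the relevant modules then upgrades the $Q$-isomorphism to injectivity over $R$, and graded-rank matching forces surjectivity. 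The main obstacle is the construction and well-definedness of $\phi_-$: the lift ambiguity in choosing $\tilde z$ must be absorbed into the image of $\phi_+$, which requires a calculation using $R=R^s\oplus R^s\delta_s$ together with the reflection faithfulness of $V$ restricted to $\langle s,t\rangle$ to show that elements of $R(A)$ vanishing on $A\cap As$ contribute to $\phi_+(R(A\cup As)(1))$ after pairing with $p_-$.
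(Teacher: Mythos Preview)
Your construction of $\phi_+$ is fine and matches the paper's summand $N$. The genuine gap is $\phi_-$. As you yourself note, the map depends on a lift $\tilde z$, and you propose that the ambiguity $k\in\ker\bigl(R(A)\twoheadrightarrow R(A\cap As)\bigr)$ lands in $\operatorname{Im}\phi_+$ after pairing with $p_-$. Writing $R(A)\otimes_{R^s}R=R(A)\otimes 1\oplus R(A)\otimes\delta_s$, one has $k\otimes p_-=(k\cdot\delta_s)\otimes 1-k\otimes\delta_s$, so your condition becomes $k\in R(A)^+$ (and then automatically $k\cdot\delta_s\in R(A)^+$). You do not prove this, and it cannot be extracted from a generic $\delta_s$: the only place the ambient hypothesis on $V$ enters (existence of $v$ with $\langle\alpha_{t_1}^\vee,v\rangle=0$, $\langle\alpha_{t_2}^\vee,v\rangle=1$) is exactly to control such statements, and your sketch never invokes it beyond a vague appeal to ``reflection faithfulness''. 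Even granting the inclusion, you would only have a map to $\operatorname{coker}\phi_+$ and would still owe a splitting.

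The paper sidesteps the lifting entirely. Since $xs>x$ one has $A\setminus As=\{x,xr\}$ for a unique reflection $r\in T'\setminus\{s\}$; the hypothesis supplies $v_0\in V$ with $\langle\alpha_r^\vee,v_0\rangle=0$ and $\langle\alpha_s^\vee,v_0\rangle=1$. The element $z_0=x(v_0)\otimes 1-1\otimes v_0\in R(A)$ then vanishes at both $x$ and $xr$, so multiplication by $z_0$ has kernel exactly $\ker\bigl(R(A)\to R(A\cap As)\bigr)$ and yields a well-defined embedding $R(A\cap As)(-2)\hookrightarrow R(A)$ with no choices. The paper then proves directly the $(R,R^s)$-bimodule decomposition $R(A)=M\oplus N$, where $M$ is the $(R,R^s)$-submodule generated by $z_0$ and $N=R(A)^+$; applying $-\otimes_{R^s}R$ and Lemma~\ref{lem:+ + tensor = all} (to both $A\cup As$ and $A\cap As$) finishes. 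The point is that the correct ``antisymmetric'' element is not the generic $p_-$ but one tailored to the pair $(x,r)$.

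A secondary issue: your graded-rank comparison presumes that $R(A)$, $R(A\cup As)$, $R(A\cap As)$ are graded free as $R$-modules, but that is not known beforehand --- it is established inductively, with this very lemma as the inductive step.
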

\begin{proof}
Since $xs > x$, we have $A\setminus As = \{x,xr\}$ for some $r\in T'\setminus\{s\}$.
Take $v_0\in V$ such that $\langle \alpha_r^\vee,v_0\rangle = 0$ and $\langle \alpha_s^\vee,v_0\rangle = 1$ and consider $z_0 = x(v_0)\otimes 1 - 1\otimes v_0$.
Note that we have the restriction map $R(A)\to R(A\cap As)$.
\begin{claim}
Let $f\in R(A)$.
Then we have $fz_0 = 0$ in $R(A)$ if and only if $f = 0$ in $R(A\cap As)$.
\end{claim}
Recall the embedding $R(A)\to R^A$.
Let $(f_y)_{y\in A}$ be the image of $f$.
Then the image of $fz_0$ is $(f_y(x(v_0) - y(v_0)))_{y\in A}$.
When $y \in \{x,xr\} = A\setminus As$, then $x(v_0) - y(v_0) = 0$ since $r(v_0) = v_0$ from the assumption on $v_0$.
Therefore if $f = 0$ in $R(A\cap As)$, then since $f_y = 0$ for any $A\cap As$, we have $f_y(x(v_0) - y(v_0)) = 0$ for any $y\in A$.
We get $fz_0 = 0$ in $R(A)$.

We prove $x(v_0) - y(v_0) \ne 0$ for any $y\in A\cap As$.
This give the reverse implication.
We assume that $y^{-1}x(v_0) = v_0$.
Then we also have $y^{-1}xr(v_0) = v_0$.
One of $y^{-1}x$ or $y^{-1}xr$ is in $T'$.
Let $r'$ be the element in $T'$.
Then $r(v_0) = r'(v_0) = v_0$ and since $y\ne x,rx$, we have $r\ne r'$.
Therefore $\langle \alpha_r^\vee,v_0\rangle = \langle \alpha_{r'}^\vee,v_0\rangle = 0$.
Hence $v_0$ is orthogonal to $\Frac(\Coef)\alpha_r + \Frac(\Coef)\alpha_{r'}$.
Recall that this space does not depend on $r,r'$.
Hence $v_0$ is orthogonal to $\Frac(\Coef)\alpha_r + \Frac(\Coef)\alpha_s$.
This contradicts to the assumption $\langle \alpha_s,v_0\rangle = 1$.
We get the claim.

By the claim, we have an isomorphism $R(A\cap As)(-2)\simeq R(A)z_0$.
Let $M$ be the sub $(R,R^s)$-module of $R(A)$ generated by $z_0$.
Then the above isomorphism implies $R(A\cap As)^{+}(-2)\simeq M$.

Next consider the restriction map $R(A\cup As)\to R(A)$.
Since $A\cup As$ is the right $s$-invariant, we have an action of $s$ on $R(A\cup As)$.
Let $R(A\cup As)^{s = 1}$ be the $s$-fixed part.
Then $R(A\cup As)^{+}\subset R(A\cup As)^{s = 1}$ and the above restriction map gives an injective map $R(A\cup As)^{s = 1}\hookrightarrow R(A)$.
Let $N$ be the image of $R(A\cup As)^{+}\hookrightarrow R(A)$.
We have $N\simeq R(A\cup As)^{+}$.

To prove $R(A)\otimes_{R}B_s\simeq R(A\cup As)(1)\oplus R(A\cap As)(-1)$, it is sufficient to prove $R(A) = M\oplus N$ by Lemma~\ref{lem:+ + tensor = all}.
The subspace $M + N$ is the image of $(R\otimes R^s)z_0 + R\otimes R^s$.
Since $z_0 = x(v_0)\otimes 1 + 1 \otimes v_0$, we have $(R\otimes R^s)z_0 + R\otimes R^s = R\otimes R^sv_0 + R\otimes R^s$.
We have $R = R^s + R^sv_0$ since $\langle \alpha_s^\vee,v_0\rangle = 1$.
Hence $(R\otimes R^s)z_0 + R\otimes R^s = R\otimes R$.
Therefore $M + N = R(A)$.

Let $m\in M$ and $n\in N$.
We can write $m = fz_0$ for some $f\in \Ima(R\otimes R^s\to R(A))$.
We assume $m = n$.
Consider the image of $f$ in $R(A\cap As)$ and let $(f_y)_{y\in A\cap As}$ be the image in $R^{A\cap As}$.
We also denote the image of $n$ in $R^{A\cap As}$ by $(n_y)_{y\in A\cap As}$.
Then for each $y\in A\cap As$, we have $f_y(x(v_0) - y(v_0)) = n_{y}$.
We also have $f_{ys}(x(v_0) - ys(v_0)) = n_{ys}$.
Since $f$ and $n$ are $s$-invariant, $f_{ys} = f_{y}$, $n_{ys} = n_y$.
Therefore $f_{y}(ys(v_0) - y(v_0)) = 0$.
Namely we have $f_{y}y(\alpha_s) = 0$.
We get $f_y = 0$ and hence $fz_0\in M$ is zero.
\end{proof}
We have proved $R(A)\otimes_{R}B_s\simeq R(A\cup As)(1)\oplus R(A\cap As)(-1)$.
For $A = \{y\in W'\mid y\le x\}$ with $xs > x$, we have $A\cup As = \{y\in W'\mid y\le xs\}$.
We set $R(\le x) = R(\{y\in W'\mid y\le x\})$ for $x\in W'$.
Then $R(\le xs)(1)$ is a direct summand of $R(\le x)\otimes_{R}B_s$ if $xs > x$.
Replacing $s$ with $t$, $R(\le yt)(1)$ is a direct summand of $R(\le y)\otimes_{R}B_s$ if $yt > y$.
Therefore we have the following.
Let $\underline{w} \in \{s,t\}^{\ell(w)}$ be a reduced expression of $w\in W$.
Then $B_{\underline{w}}$ has a direct summand $R(\le w)(\ell(w))$.
Let $w_0\in W'$ be the longest element.
Then for $\underline{x}$ and $\underline{y}$ as in Assumption~\ref{assump:existecne of 2-coloerd map}, $R(\le w_0)(\ell(w_0))$ is a direct summand of both $B_{\underline{x}}$ and $B_{\underline{y}}$.
Therefore we get a non-zero homomorphism $\Phi\colon B_{\underline{x}}\twoheadrightarrow R(\le w_0)(\ell(w_0))\hookrightarrow B_{\underline{y}}$.

\begin{lem}
The homomorphism $\Phi$ induces an isomorphism $B_{\underline{x}}^{w_0}\simeq B_{\underline{y}}^{w_0}$.
\end{lem}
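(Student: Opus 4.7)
The plan is to leverage the direct-sum decomposition just established. By construction, $\Phi$ is the composition of the projection from $B_{\underline{x}}$ onto its direct summand $R(\le w_0)(\ell(w_0))$ with the inclusion of the same summand into $B_{\underline{y}}$, so everything reduces to showing that at the $w_0$-stalk the complementary summands contribute nothing.

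Concretely, I would first compute $\dim_Q (B_{\underline{x}})_Q^{w_0}$. By Lemma~\ref{lem:Q-dimension of BS module} this equals $p_{\underline{x}}^{w_0}(1)$, and by Lemma~\ref{lem:defect formula} that equals $\#\{\boldsymbol{e}\mid \underline{x}^{\boldsymbol{e}}=w_0\}$. Since $\underline{x}$ is a reduced expression of $w_0$ of length $\ell(w_0)$, the only subexpression evaluating to $w_0$ is the full one $\boldsymbol{e}=(1,\dots,1)$, so $\dim_Q(B_{\underline{x}})_Q^{w_0}=1$, and the same holds for $\underline{y}$. Now write $B_{\underline{x}}=R(\le w_0)(\ell(w_0))\oplus C_1$ and $B_{\underline{y}}=R(\le w_0)(\ell(w_0))\oplus C_2$. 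Since $M\mapsto M\otimes_R Q$, and therefore $M\mapsto M_Q^w$ and $M\mapsto M^w$, commute with finite direct sums, and since $R(\le w_0)_Q^{w_0}\simeq Q_{w_0}$ already fills out the one-dimensional stalk, we conclude $(C_1)_Q^{w_0}=0$; as $C_1^{w_0}\subset (C_1)_Q^{w_0}$, also $C_1^{w_0}=0$, and the same argument gives $C_2^{w_0}=0$.

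Hence $B_{\underline{x}}^{w_0}=R(\le w_0)^{w_0}(\ell(w_0))=B_{\underline{y}}^{w_0}$, where $R(\le w_0)^{w_0}\simeq R_{w_0}$ via the surjection $R\otimes R\to R$, $f\otimes g\mapsto fw_0(g)$. Taking $w_0$-components in the defining composition of $\Phi$ therefore gives the projection $R_{w_0}(\ell(w_0))\oplus 0\twoheadrightarrow R_{w_0}(\ell(w_0))$ followed by the inclusion $R_{w_0}(\ell(w_0))\hookrightarrow R_{w_0}(\ell(w_0))\oplus 0$, which is manifestly an isomorphism. The only point requiring real care is the vanishing of the complementary summands at $w_0$, and this is forced by the $Q$-dimension count coming out of the defect formula; the rest is just verifying that ``having a direct summand'' interacts compatibly with the operation $M\mapsto M^w$.
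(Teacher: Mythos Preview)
Your proof is correct. Both you and the paper reduce the question to showing that the complementary summand $C_1$ (resp.\ $C_2$) of $R(\le w_0)(\ell(w_0))$ in $B_{\underline{x}}$ (resp.\ $B_{\underline{y}}$) has vanishing $w_0$-component, after which the composition defining $\Phi$ obviously restricts to an isomorphism on that summand. The difference lies only in how this vanishing is obtained. The paper argues inductively from Lemma~\ref{lem:Soergel's decomposition, rank 2}: since $R(A\cap As)$ always has support strictly below $xs$, one sees by induction that $B_{\underline{w}}\simeq R(\le w)(\ell(w))\oplus M$ with $\supp_{W'}(M)\subset\{y<w\}$, whence $M^{w_0}=0$ automatically. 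You instead use the global dimension count $\dim_Q(B_{\underline{x}})_Q^{w_0}=p_{\underline{x}}^{w_0}(1)=1$ from Lemma~\ref{lem:Q-dimension of BS module} together with the fact that $R(\le w_0)_Q^{w_0}$ already contributes one dimension, forcing $(C_1)_Q^{w_0}=0$. Your argument is a bit more economical here, while the paper's inductive argument yields the slightly stronger statement that the complement is supported strictly below $w_0$; for the lemma at hand either suffices.
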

\begin{proof}
By induction, we can prove that $B_{\underline{w}}\simeq R(\le w)(\ell(w))\oplus M$ with $\supp_{W'}(M)\subset \{y \in W'\mid y < w\}$ for any $w\in W'$ and a reduced expression $\underline{w}$ of $w$.
Therefore $B_{\underline{x}} = R(\le w_0)(\ell(w_0))\oplus M$ with $\supp_{W'}(M)\subset W'\setminus\{w_0\}$.
Hence $B_{\underline{x}}^{w_0} = R(\le w_0)(\ell(w_0))^{w_0}$ since $M^{w_0} = 0$.
Therefore the projection $B_{\underline{x}}\twoheadrightarrow R(\le w_0)(\ell(w_0))$ induces an isomorphism $B_{\underline{x}}^{w_0}\simeq R(\le w_0)(\ell(w_0))^{w_0}$.
Similarly the embedding $R(\le w_0)(\ell(w_0))\hookrightarrow B_{\underline{y}}$ induces an isomorphism $R(\le w_0)(\ell(w_0))^{w_0}\hookrightarrow B_{\underline{y}}^{w_0}$.
We get the lemma by the construction of $\Phi$.
\end{proof}

Define $\varphi_{\underline{x}}\colon B_{\underline{x}}\simeq R\otimes_{R^s}R\otimes_{R^t}\otimes\cdots\otimes R\to R(-\ell(w_0))$ by $f_0\otimes f_1\otimes\dots\otimes f_{\ell(w_0)}\mapsto f_0(s(f_1))(st(f_2))\dotsm (st\dotsb (f_l))$.
(Recall that $\underline{x} = (s,t,\ldots)$.)
Then this satisfies $\varphi_{\underline{x}}(uf) = \varphi_{\underline{x}}(w_0(f)u)$ for any $u\in B_{\underline{x}}$ and $f\in R$.
Therefore it factors through $B_{\underline{x}}\to B_{\underline{x}}^{w_0}$.
We get $\overline{\varphi}_{\underline{x}}\colon B_{\underline{x}}^{w_0}\to R(-\ell(w_0))$.
This is surjective.
On the other hand, after tensoring $Q$, both are one-dimensional $Q$-vector space by Lemma~\ref{lem:Q-dimension of BS module}.
Therefore it is an isomorphism.
In particular, $\overline{\varphi}_{\underline{x}}$ is injective, hence it is an isomorphism.

Note that the $(-\ell(w))$-th degree part of $B_{\underline{x}}$ (resp.\ $B_{\underline{y}}$) is free of rank one $\Coef$-module with a basis $u_{\underline{x}}$ (resp.\ $u_{\underline{y}}$).
Therefore $\Phi(u_{\underline{x}}) = cu_{\underline{y}}$ for some $c\in\Coef$.
Let $\overline{u}_{\underline{x}}\in B_{\underline{x}}^{w_0}$ (resp.\ $\overline{u}_{\underline{y}}\in B_{\underline{y}}^{w_0}$) be the image of $u_{\underline{x}}$ (resp.\ $u_{\underline{y}}$).
Since $\varphi_{\underline{x}}(u_{\underline{x}}) = 1$, we have the following commutative diagram:
\[
\begin{tikzcd}
\overline{u}_{\underline{x}}\arrow[rrr,mapsto,bend left]\arrow[d,mapsto] & B_{\underline{x}}^{w_0}\arrow[l,phantom,"\in"]\arrow[r,"\sim"]\arrow[d,"\wr"] & B_{\underline{y}}^{w_0}\arrow[d,"\wr"]\arrow[r,phantom,"\ni"] & c\overline{u}_{\underline{y}}\arrow[d,mapsto] \\
1& R(-\ell(w_0))\arrow[l,phantom,"\in"]\arrow[r] & R(-\ell(w_0))\arrow[r,phantom,"\ni"] & c.
\end{tikzcd}
\]
We have $c\in \Coef^\times$ and $c^{-1}\Phi$ gives a desired homomorphism.

\subsection{Light leaves}
We recall the definition of light leaves \cite{MR2441994} following notation of \cite{MR3555156}.
We need one more notation.

Let $w\in W$ and let $\underline{x},\underline{y}\in S^{\ell(w)}$ be two reduced expressions of $w$.
Then by a fundamental property of a Coxeter system, there exists a sequence $\underline{x}^0 = \underline{x},\underline{x}^1,\dots,\underline{x}^r = \underline{y}$ such that each $\underline{x}^i$ and $\underline{x}^{i + 1}$ only differs with a single braid relation.
In each step, we can attach a homomorphism $B_{\underline{x}^i}\to B_{\underline{x}^{i + 1}}$ using the homomorphism in Assumption~\ref{assump:existecne of 2-coloerd map}.
We write the composite $\rex$.
Note that $\rex$ sends $u_{\underline{x}}$ to $u_{\underline{y}}$.
Of course, this homomorphism is not unique.
We fix it for any such two reduced expressions.
See \cite[4.2]{MR3555156} for the details.

For the definition of light leaves, we use the following maps.
Let $s\in S$.
First set $\partial_s(f) = (f - s(f))/\alpha_s$.
Put
\begin{align*}
m^s\colon B_s\to R &\qquad f_1\otimes f_2\mapsto f_1f_2 && \text{degree $1$},\\
i_0^s\colon B_s\otimes B_s\to B_s &\qquad f_1\otimes f_2\otimes f_3 \mapsto f_1\partial_s(f_2)\otimes f_3 && \text{degree $-1$},\\
i_1^s\colon B_s\otimes B_s \to R &\qquad f_1\otimes f_2\otimes f_3 \mapsto f_1\partial_s(f_2)f_3 && \text{degree $0$}.
\end{align*}
Here $f_1,f_2,f_3\in R$ and, in the definition of $i_0^s,i_1^s$, we regard $B_s\otimes B_s = R\otimes_{R^s}R\otimes_{R^s}R$.
Since $V$ is a faithful $\{e,s\}$-representation, these are homomorphisms in $\mathcal{C}$.

\begin{defn}\label{defn:light leaves}
Let $\underline{x} = (s_1,\dots,s_l)\in S^l$, $\boldsymbol{e} = (\boldsymbol{e}_1,\dots,\boldsymbol{e}_l) \in \{0,1\}^l$.
Then we define a light leaf $\LL_{\underline{x},\boldsymbol{e}}$ which is a morphism from $B_{\underline{x}}$ to $B_{\underline{w}}$ in $\mathcal{C}$ where $\underline{w}$ is a fixed reduced expression of $w = \underline{x}^{\boldsymbol{e}}$.
Let $\underline{x}_{\le k} = (s_1,\dots,s_k)$, $\boldsymbol{e}_{\le k} = (\boldsymbol{e}_1,\dots,\boldsymbol{e}_k)$ and $w_k = \underline{x}_{\le k}^{\boldsymbol{e}_{\le k}}$.
We fix a reduced expression $\underline{w}_k$ of $w_k$ and define $\LL_k\colon B_{\underline{x}_{\le k}}\to B_{\underline{w}_k}$ inductively as follows.

\begin{enumerate}
\item[($U0$)] $\boldsymbol{e}_k = 0$ and $w_{k - 1}s_k > w_{k - 1}$.
\[
B_{\underline{x}_{\le k - 1}}\otimes B_{s_k}\xrightarrow{\LL_{k - 1}\otimes \Id_{B_{s_k}}}B_{\underline{w}_{k - 1}}\otimes B_{s_k}\xrightarrow{\Id_{B_{\underline{w}_{k - 1}}}\otimes m^{s_k}} B_{\underline{w}_{k - 1}}.
\]
\item[($U1$)] $\boldsymbol{e}_k = 1$ and $w_{k - 1}s_k > w_{k - 1}$.
\[
B_{\underline{x}_{k - 1}}\otimes B_{s_k}\xrightarrow{\LL_{k - 1}\otimes\Id_{B_{s_k}}} B_{\underline{w}_{k - 1}}\otimes B_{s_k} \xrightarrow{\rex}B_{\underline{w}_k}.
\]

\item[($D0$)] $\boldsymbol{e}_k = 0$ and $w_{k - 1}s_k < w_{k - 1}$.
Let $(t_1,\dots,t_{p - 1},s_k)$ be a reduced expression of $w_{k - 1}$ ending with $s_k$.
\begin{multline*}
B_{\underline{x}_{\le k - 1}}\otimes B_{s_k}\xrightarrow{\LL_{k - 1}\otimes \Id_{B_{s_k}}}B_{\underline{w}_{k - 1}}\otimes B_{s_k}\xrightarrow{\rex\otimes\Id_{B_{s_k}}}B_{(t_1,\dots ,t_{p - 1},s_k)}\otimes B_{s_k}\\
\xrightarrow{\Id_{B_{(t_1,\dots ,t_{p - 1})}}\otimes i_0^{s_k}}B_{(t_1,\dots ,t_{p - 1},s_k)}\xrightarrow{\rex}B_{\underline{w}_k}.
\end{multline*}

\item[($D1$)] $\boldsymbol{e}_k = 1$ and $w_{k - 1}s_k < w_{k - 1}$.
Let $(t_1,\dots,t_{p - 1},s_k)$ be a reduced expression of $w_{k - 1}$ ending with $s_k$.
\begin{multline*}
B_{\underline{x}_{\le k - 1}}\otimes B_{s_k}\xrightarrow{\LL_{k - 1}\otimes \Id_{B_{s_k}}}B_{\underline{w}_{k - 1}}\otimes B_{s_k}\xrightarrow{\rex\otimes \Id_{B_{s_k}}}B_{(t_1,\dots, t_{p - 1},s_k)}\otimes B_{s_k}\\
\xrightarrow{\Id_{B_{(t_1,\dots ,t_{p - 1})}}\otimes i_1^{s_k}}B_{(t_1,\dots ,t_{p - 1})}\xrightarrow{\rex}B_{\underline{w}_k}.
\end{multline*}
\end{enumerate}
Finally we put $\LL_{\underline{x},\boldsymbol{e}} = \LL_l$.
By the construction, the degree of $\LL_{\underline{x},\boldsymbol{e}}$ is $d(\boldsymbol{e})$.
\end{defn}

We fix a sequence $\underline{x} = (s_1,\dots,s_l)$ in this subsection.
\begin{lem}
Let $\boldsymbol{e},\boldsymbol{f}\in \{0,1\}^l$ such that $\underline{x}^{\boldsymbol{e}} = \underline{x}^{\boldsymbol{f}}$.
Assume that the labels of $\boldsymbol{e}$ and $\boldsymbol{f}$ at $i$ are the same for all $i = 1,\dots,l$.
Then we have $\boldsymbol{e} = \boldsymbol{f}$.
\end{lem}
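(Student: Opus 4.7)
The plan is a clean induction on $l$, decoupling the last coordinate from the prefix by using the definition of the label at position $l$.

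First I would reduce to showing $\boldsymbol{e}_l = \boldsymbol{f}_l$. Once this is established, the relation $\underline{x}^{\boldsymbol{e}} = \underline{x}^{\boldsymbol{f}}$ combined with $s_l^{\boldsymbol{e}_l} = s_l^{\boldsymbol{f}_l}$ forces $x_{l-1}^{\boldsymbol{e}} = x_{l-1}^{\boldsymbol{f}}$, where $x_{k}^{\boldsymbol{e}} = s_1^{\boldsymbol{e}_1}\cdots s_k^{\boldsymbol{e}_k}$ (and similarly for $\boldsymbol{f}$). Since the label at every index $i < l$ depends only on $x_{i-1}$ and $s_i$, the prefix sequences $\boldsymbol{e}_{\le l-1}$ and $\boldsymbol{f}_{\le l-1}$ still have matching labels with respect to $\underline{x}_{\le l-1}$, and they yield the same element $x_{l-1}^{\boldsymbol{e}} = x_{l-1}^{\boldsymbol{f}}$ of $W$. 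The inductive hypothesis then gives $\boldsymbol{e}_{\le l-1} = \boldsymbol{f}_{\le l-1}$, and we are done.

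So the heart of the argument is showing $\boldsymbol{e}_l = \boldsymbol{f}_l$. Suppose, for contradiction, that $\boldsymbol{e}_l \neq \boldsymbol{f}_l$; without loss of generality $\boldsymbol{e}_l = 0$ and $\boldsymbol{f}_l = 1$. Then $\underline{x}^{\boldsymbol{e}} = \underline{x}^{\boldsymbol{f}}$ gives $x_{l-1}^{\boldsymbol{e}} = x_{l-1}^{\boldsymbol{f}} s_l$, i.e.\ $x_{l-1}^{\boldsymbol{e}} s_l = x_{l-1}^{\boldsymbol{f}}$. Now compare the labels at position $l$: the $\boldsymbol{e}$-label is $U$ iff $x_{l-1}^{\boldsymbol{e}} s_l > x_{l-1}^{\boldsymbol{e}}$, i.e.\ iff $x_{l-1}^{\boldsymbol{f}} > x_{l-1}^{\boldsymbol{f}} s_l$; but this last condition says precisely that the $\boldsymbol{f}$-label at $l$ is $D$. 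So the two labels at position $l$ are forced to be opposite, contradicting the hypothesis.

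I do not anticipate a genuine obstacle here; the only thing to be careful about is that the label depends on the whole sequence via $x_{i-1}$, not just on $\boldsymbol{e}_i$, so one must verify explicitly that when $\boldsymbol{e}_{\le l-1}$ and $\boldsymbol{f}_{\le l-1}$ coincide (after the induction starts running) the labels computed inside $\boldsymbol{e}$ and inside $\boldsymbol{f}$ at each $i < l$ agree — and this is immediate.
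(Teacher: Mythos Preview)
Your proof is correct and follows essentially the same approach as the paper: both argue by backward induction, first showing that the last coordinates $\boldsymbol{e}_l,\boldsymbol{f}_l$ agree via the observation that if they differ then the labels at position $l$ are forced to be opposite, and then reducing to the truncated sequences. The only cosmetic difference is that the paper phrases this as backward induction on the index $i$ inside a fixed $\underline{x}$, while you phrase it as induction on $l$ by passing to $\underline{x}_{\le l-1}$; these are the same argument.
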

\begin{proof}
We prove $\boldsymbol{e}_i = \boldsymbol{f}_i$ by backward induction on $i$.
Assume that we have $\boldsymbol{e}_j = \boldsymbol{f}_j$ for any $j > i$.
Since $s_1^{\boldsymbol{e}_1}\dotsm s_l^{\boldsymbol{e}_l} = s_1^{\boldsymbol{f}_1}\dotsm s_l^{\boldsymbol{f}_l}$ by the assumption and $s_{i + 1}^{\boldsymbol{e}_{i + 1}}\dotsm s_l^{\boldsymbol{e}_l} = s_{i + 1}^{\boldsymbol{f}_{i + 1}}\dotsm s_l^{\boldsymbol{f}_l}$ by inductive hypothesis, we have $s_1^{\boldsymbol{e}_1}\dotsm s_i^{\boldsymbol{e}_i} = s_1^{\boldsymbol{f}_1}\dotsm s_i^{\boldsymbol{f}_i}$.

Assume that the label of $\boldsymbol{e}$ at $i$ (hence that of $\boldsymbol{f}$ at $i$) is $U$, $\boldsymbol{e}_i = 1$, $\boldsymbol{f}_i = 0$.
Set $w = s_1^{\boldsymbol{e}_1}\dotsm s_{i - 1}^{\boldsymbol{e}_{i - 1}}$.
Then $s_1^{\boldsymbol{f}_1}\dotsm s_{i - 1}^{\boldsymbol{f}_{i - 1}} = ws_i$ since $s_1^{\boldsymbol{e}_1}\dotsm s_{i}^{\boldsymbol{e}_{i}} = s_1^{\boldsymbol{f}_1}\dotsm s_{i}^{\boldsymbol{f}_{i}}$.
Then since the label of $\boldsymbol{e}$ at $i$ is $U$, we have $ws_i > w$ and since the label of $\boldsymbol{f}$ at $i$ is also $U$, we have $(ws_i)s_i > ws_i$.
This is a contradiction.
We also have a contradiction for other cases.
Hence we have $\boldsymbol{e}_i = \boldsymbol{f}_i$.
\end{proof}
Let $w\in W$.
Using this lemma, we can define a total order ${<} = {<_{\underline{x},w}}$ on $\{\boldsymbol{e}\in S^l\mid \underline{x}^{\boldsymbol{e}} = w\}$ as follows: $\boldsymbol{f} < \boldsymbol{e}$ if and only if there exists $i$ such that
\begin{itemize}
\item the labels of $\boldsymbol{e}$ and $\boldsymbol{f}$ are the same at any $j < i$.
\item the label of $\boldsymbol{e}$ is $D$ at $i$.
\item the label of $\boldsymbol{f}$ is $U$ at $i$.
\end{itemize}

Fix $\delta_s\in V$ such that $\langle \alpha_s^\vee,\delta_s\rangle = 1$ for each $s\in S$.
For $\boldsymbol{e} \in \{0,1\}^l$, we define $b_{\underline{x},\boldsymbol{e}}\in B_{\underline{x}}$ by $b_{\underline{x},\boldsymbol{e}} = b_1\otimes\dotsm \otimes b_l$ where $b_i\in B_{s_i}$ is defined by
\[
b_i = 
\begin{cases}
1\otimes 1 & (\text{The label of $\boldsymbol{e}$ at $i$ is $U$}),\\
\delta_{s_i} \otimes 1 & (\text{The label of $\boldsymbol{e}$ at $i$ is $D$}).
\end{cases}
\]
Then we have
\begin{prop}\label{prop:dual basis of light leaves}
Let $w\in W$, $\boldsymbol{e},\boldsymbol{f}\in \{0,1\}^l$ and assume that $\underline{x}^{\boldsymbol{e}} = \underline{x}^{\boldsymbol{f}} = w$.
Fix a reduced expression $\underline{w}$ of $w$.
Then we have
\[
\LL_{\underline{x},\boldsymbol{e}}(b_{\underline{x},\boldsymbol{f}}) = 
\begin{cases}
u_{\underline{w}} & (\boldsymbol{f} = \boldsymbol{e}),\\
0 & (\boldsymbol{f} < \boldsymbol{e}).
\end{cases}
\]
In particular, $\{\LL_{\underline{x},\boldsymbol{e}}\mid \underline{x}^{\boldsymbol{e}} = w\}$ is linearly independent.
\end{prop}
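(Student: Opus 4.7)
The plan is to prove this by induction on $l$, via a slightly strengthened claim that drops the hypothesis $\underline{x}^{\boldsymbol{e}} = \underline{x}^{\boldsymbol{f}}$. The crucial observation is that $b_{\underline{x},\boldsymbol{f}}$ depends on $\boldsymbol{f}$ only through its sequence of $U/D$ labels, since each factor $b_i$ equals $1\otimes 1$ or $\delta_{s_i}\otimes 1$ according to the label at $i$. Extending $<$ to all of $\{0,1\}^l$ by comparing label sequences, I will prove: for any $\boldsymbol{e},\boldsymbol{f}\in \{0,1\}^l$, (a) if the label sequences of $\boldsymbol{e}$ and $\boldsymbol{f}$ coincide then $\LL_{\underline{x},\boldsymbol{e}}(b_{\underline{x},\boldsymbol{f}}) = u_{\underline{w}}$, and (b) if the label sequence of $\boldsymbol{f}$ is strictly less than that of $\boldsymbol{e}$ then $\LL_{\underline{x},\boldsymbol{e}}(b_{\underline{x},\boldsymbol{f}}) = 0$. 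When $\underline{x}^{\boldsymbol{e}} = \underline{x}^{\boldsymbol{f}}$, coinciding labels force $\boldsymbol{e}=\boldsymbol{f}$ by the preceding lemma, so (a) and (b) together recover the proposition.

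By Definition~\ref{defn:light leaves} one has $\LL_l = O_l\circ(\LL_{l-1}\otimes \Id_{B_{s_l}})$, where $O_l$ is the ``final-step'' operation built in the four cases from $m^{s_l}$, $\rex$, $i_0^{s_l}$, $i_1^{s_l}$; since $b_{\underline{x},\boldsymbol{f}} = b_{\underline{x}_{\le l-1},\boldsymbol{f}_{\le l-1}}\otimes b_l$, the induction reduces to computing $O_l\bigl(\LL_{l-1}(b_{\underline{x}_{\le l-1},\boldsymbol{f}_{\le l-1}})\otimes b_l\bigr)$. For (a), the truncated label sequences still agree, so inductive (a) gives $\LL_{l-1}(b_{\underline{x}_{\le l-1},\boldsymbol{f}_{\le l-1}})=u_{\underline{w}_{l-1}}$; in each of the four cases one checks by a direct unwinding---using $\partial_{s_l}(\delta_{s_l})=1$ in the $D$-cases---that $O_l(u_{\underline{w}_{l-1}}\otimes b_l)=u_{\underline{w}_l}$. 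For (b), if the first label-disagreement occurs at some $i<l$ then $\boldsymbol{f}_{\le l-1}$'s labels are still strictly less than $\boldsymbol{e}_{\le l-1}$'s, so inductive (b) kills $\LL_{l-1}$, and hence $\LL_l(b_{\underline{x},\boldsymbol{f}})=0$.

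The delicate case of (b) is when the first disagreement is at position $l$: the label of $\boldsymbol{e}$ at $l$ is $D$ (so $O_l$ is of type $D0$ or $D1$, involving $i_0^{s_l}$ or $i_1^{s_l}$), the label of $\boldsymbol{f}$ at $l$ is $U$, and $b_l=1\otimes 1$. Here the partial endpoints $\underline{x}_{\le l-1}^{\boldsymbol{f}_{\le l-1}}$ and $\underline{x}_{\le l-1}^{\boldsymbol{e}_{\le l-1}}$ typically differ, so any version of the claim tied to matching endpoints fails. But the truncated label sequences are identical, so the key observation gives $b_{\underline{x}_{\le l-1},\boldsymbol{f}_{\le l-1}}=b_{\underline{x}_{\le l-1},\boldsymbol{e}_{\le l-1}}$ and inductive (a) produces $\LL_{l-1}(b_{\underline{x}_{\le l-1},\boldsymbol{f}_{\le l-1}})=u_{\underline{w}_{l-1}}$. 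Applying $\rex$ factors this as $u_{(t_1,\dots,t_{p-1})}\otimes(1\otimes 1)$, so the remaining $i_0^{s_l}$ or $i_1^{s_l}$ is applied to $(1\otimes 1)\otimes(1\otimes 1) = 1\otimes 1\otimes 1$, which vanishes because $\partial_{s_l}(1)=0$. This is the main obstacle: precisely here the label-level strengthening---freeing us from matching endpoints at every inductive step---is essential. Linear independence of $\{\LL_{\underline{x},\boldsymbol{e}}\mid \underline{x}^{\boldsymbol{e}}=w\}$ then follows from the triangularity of the pairing: evaluating any relation $\sum c_{\boldsymbol{e}}\LL_{\underline{x},\boldsymbol{e}}=0$ on $b_{\underline{x},\boldsymbol{f}}$ for the $<$-smallest $\boldsymbol{f}$ yields $c_{\boldsymbol{f}}u_{\underline{w}}=0$, hence $c_{\boldsymbol{f}}=0$, and iterating kills every coefficient.
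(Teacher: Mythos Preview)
Your proof is correct and follows essentially the same approach as the paper's: both arguments hinge on the observation that $b_{\underline{x},\boldsymbol{f}}$ depends only on the label sequence of $\boldsymbol{f}$, prove by induction that when the labels of $\boldsymbol{e}$ and $\boldsymbol{f}$ agree up to step $k$ one has $\LL_k(b_{\underline{x}_{\le k},\boldsymbol{f}_{\le k}})=u_{\underline{w}_k}$ via the four-case computation, and then observe that at the first label-disagreement (where $\boldsymbol{e}$ has $D$ and $\boldsymbol{f}$ has $U$) the relevant $i_0^{s_k}$ or $i_1^{s_k}$ is fed $1\otimes 1\otimes 1$ and vanishes because $\partial_{s_k}(1)=0$. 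The only difference is organizational---you package both conclusions into a single simultaneous induction on $l$, whereas the paper establishes the ``labels agree'' statement first and then treats the vanishing case afterward---but the computations and key ideas are identical.
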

\begin{proof}
Let $\underline{x}_{\le k}$, $\boldsymbol{e}_{\le k}$, $\underline{w}_k$ as in Definition~\ref{defn:light leaves} and $\boldsymbol{f}_{\le k}$ similarly.
We prove that if the labels of $\boldsymbol{e}$ and $\boldsymbol{f}$ are the same at $i \le k$, then $\LL_k(b_{\underline{x}_{\le k},\boldsymbol{f}_{\le k}}) = u_{\underline{w}_k}$ by induction on $k$.

Assume that the label of $\boldsymbol{e}$ at $k$ is $U$.
Then we have $b_{\underline{x}_{\le k},\boldsymbol{f}_{\le k}} = b_{\underline{x}_{\le k - 1},\boldsymbol{f}_{\le k - 1}}\otimes (1\otimes 1)$.

If $\boldsymbol{e}_k = 0$, then we have 
\[
\LL_k(b_{\underline{x}_{\le k},\boldsymbol{f}_{\le k}}) = \LL_{k - 1}(b_{\underline{x}_{\le k - 1},\boldsymbol{f}_{\le k - 1}})\otimes m^{s_k}(1\otimes 1) = u_{\underline{w}_{k - 1}}\otimes m^{s_k}(1\otimes 1) = u_{\underline{w}_{k}}.
\]

If $\boldsymbol{e}_k = 1$, then 
\[
\LL_k(b_{\underline{x}_{\le k},\boldsymbol{f}_{\le k}}) = \rex(\LL_{k - 1}(b_{\underline{x}_{\le k - 1},\boldsymbol{f}_{\le k - 1}})\otimes (1\otimes 1)) = \rex(u_{\underline{w}_{k - 1}}\otimes (1\otimes 1)) = u_{\underline{w}_k}.
\]

Assume that the label of $\boldsymbol{e}$ at $k$ is $D$.
Then we have $b_{\underline{x}_{\le k},\boldsymbol{f}_{\le k}} = b_{\underline{x}_{\le k - 1},\boldsymbol{f}_{\le k - 1}}\otimes (\delta_{s_k}\otimes 1)$.
Let $(t_1,\dots,t_{p - 1},s_k)$ be a reduced expression of $w_{k - 1}$.
If $\boldsymbol{e}_k = 0$, then we have 
\begin{align*}
\LL_k(b_{\underline{x}_{\le k},\boldsymbol{f}_{\le k}}) & = \rex((\Id\otimes i_0^{s_k})(\rex(\LL_{k - 1}(b_{\underline{x}_{\le k - 1},\boldsymbol{f}_{\le k - 1}}))\otimes (\delta_{s_k}\otimes 1)))\\
& = \rex((\Id\otimes i_0^{s_k})(u_{(t_1,\dots,t_{p - 1},s_k)}\otimes (\delta_{s_k}\otimes 1)))\\
& = \rex(u_{(t_1,\dots,t_{p - 1})}\otimes i_0^{s_k}(1\otimes \delta_{s_k}\otimes 1))\\
& = \rex(u_{(t_1,\dots,t_{p - 1})}\otimes (\partial_{s_k}(\delta_{s_k})\otimes 1))\\
& = \rex(u_{(t_1,\dots,t_{p - 1},s_k)})\\
& = u_{\underline{w}_k}.
\end{align*}

If $\boldsymbol{e}_k = 1$, then we have 
\begin{align*}
\LL_k(b_{\underline{x}_{\le k},\boldsymbol{f}_{\le k}}) & = \rex((\Id\otimes i_1^{s_k})(\rex(\LL_{k - 1}(b_{\underline{x}_{\le k - 1},\boldsymbol{f}_{\le k - 1}}))\otimes (\delta_{s_k}\otimes 1)))\\
& = \rex((\Id\otimes i_1^{s_k})(u_{(t_1,\dots,t_{p - 1},s_k)}\otimes (\delta_{s_k}\otimes 1)))\\
& = \rex(u_{(t_1,\dots,t_{p - 1})}\otimes i_1^{s_k}(1\otimes \delta_{s_k}\otimes 1))\\
& = \rex(u_{(t_1,\dots,t_{p - 1})}\partial_{s_k}(\delta_{s_k}))\\
& = \rex(u_{(t_1,\dots,t_{p - 1})})\\
& = u_{\underline{w}_{k}}.
\end{align*}
This is the end of the induction.
In particular, we have $\LL_{\underline{x},\boldsymbol{e}}(b_{\underline{x},\boldsymbol{e}}) = u_{\underline{w}}$.

Assume that $\boldsymbol{f} < \boldsymbol{e}$ and take $k$ such that the labels of $\boldsymbol{e}$ and $\boldsymbol{f}$ are the same at any $i < k$ and the label of $\boldsymbol{e}$ (resp.\ $\boldsymbol{f}$) at $k$ is $D$ (resp.\ $U$).
Then we have $b_{\underline{x}_{\le k},\boldsymbol{f}_{\le k}} = b_{\underline{x}_{\le k - 1},\boldsymbol{f}_{\le k - 1}}\otimes (1\otimes 1)$.
Let $(t_1,\dots,t_{p - 1},s_k)$ be a reduced expression of $w_{k - 1}$.
If $\boldsymbol{e}_k = 0$, then we have 
\begin{align*}
\LL_k(b_{\underline{x}_{\le k},\boldsymbol{f}_{\le k}}) & = \rex((\Id\otimes i_0^{s_k})(\rex(\LL_{k - 1}(b_{\underline{x}_{\le k - 1},\boldsymbol{f}_{\le k - 1}}))\otimes (1\otimes 1)))\\
& = \rex(u_{(t_1,\dots,t_{p - 1})}\otimes i_0^{s_k}(1\otimes 1\otimes 1))\\
& = \rex(u_{(t_1,\dots,t_{p - 1})}\otimes (\partial_{s_k}(1)\otimes 1))\\
& = 0.
\end{align*}
If $\boldsymbol{e}_k = 1$, then we have 
\begin{align*}
\LL_k(b_{\underline{x}_{\le k},\boldsymbol{f}_{\le k}}) & = \rex((\Id\otimes i_1^{s_k})(\rex(\LL_{k - 1}(b_{\underline{x}_{\le k - 1},\boldsymbol{f}_{\le k - 1}}))\otimes (1\otimes 1)))\\
& = \rex(u_{(t_1,\dots,t_{p - 1})}\otimes i_1^{s_k}(1\otimes 1\otimes 1))\\
& = \rex(u_{(t_1,\dots,t_{p - 1})}\partial_{s_k}(1))\\
& = 0.
\end{align*}

These calculations imply $\LL_{\underline{x},\boldsymbol{e}}(b_{\underline{x},\boldsymbol{f}}) = 0$.
\end{proof}

\begin{rem}
Since the degree of $\LL_{\underline{x},\boldsymbol{e}}$ (resp.\ $u_{\underline{w}}$) is $d(\boldsymbol{e})$ (resp.\ $-\ell(w)$), $\LL_{\underline{x},\boldsymbol{e}}(b_{\underline{x},\boldsymbol{e}}) = u_{\underline{w}}$ implies $\deg(b_{\underline{x},\boldsymbol{e}}) = -d(\boldsymbol{e})-\ell(w)$.
\end{rem}

\subsection{A basis of $B_{\underline{x}}^w$}
Let $w\in W$ with a reduced expression $\underline{w}$ and $\underline{x}\in S^l$, $\boldsymbol{e}\in \{0,1\}^l$ such that $\underline{x}^{\boldsymbol{e}} = w$.
Let $b_{\underline{x},\boldsymbol{e}}^w$ be the image of $b_{\underline{x},\boldsymbol{e}}\in B_{\underline{x}}$ in $B_{\underline{x}}^w$.

Let $\underline{w} = (t_1,\dots,t_r)$.
We define a morphism $\varphi_{\underline{w}}\colon B_{\underline{w}}\to R_w$ in $\mathcal{C}$ by $\varphi_{\underline{w}}(f_0\otimes\dots\otimes f_r) = f_0(t_1(f_1))\dotsm (t_1\dotsm t_r(f_r))$ here $f_0,\dots,f_r\in R$ and we identify $B_{\underline{w}} = R\otimes_{R^{t_1}}R\otimes_{R^{t_2}}\dotsm\otimes_{R^{t_l}}R$.
\begin{thm}\label{thm:graded rank of B_x^w}
Fix $\underline{x}$, $w$ and $\underline{w}$.
\begin{enumerate}
\item The left $R$-module $B_{\underline{x}}^w$ has a basis $\{b^w_{\underline{x},\boldsymbol{e}}\mid \underline{x}^{\boldsymbol{e}} = w\}$.
\item The left $R$-module $B_{\underline{x}}^w$ is graded free and its graded rank $\grk(B_{\underline{x}}^w)$ is given by $\sum_{\underline{x}^{\boldsymbol{e}} = w}v^{d(\boldsymbol{e}) + \ell(w)} = v^{\ell(w)}p_{\underline{x}}^w(v)$.
\item The homomorphisms $\{\varphi_{\underline{w}}\circ \LL_{\underline{x},\boldsymbol{e}}\mid \underline{x}^{\boldsymbol{e}} = w\}$ is a basis of $\Hom_{\mathcal{C}}^{\bullet}(B_{\underline{x}},R_w)$.
\end{enumerate}\end{thm}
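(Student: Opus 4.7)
\smallskip\noindent\emph{Plan.} The plan is to prove (1) and (2) simultaneously by showing that the homomorphism
\[
\Phi\colon B_{\underline{x}}^w \;\longrightarrow\; \bigoplus_{\underline{x}^{\boldsymbol{e}} = w} R_w\bigl(d(\boldsymbol{e})+\ell(w)\bigr)
\]
whose $\boldsymbol{e}$-component is the degree-zero shift of $\overline{\LL}_{\underline{x},\boldsymbol{e}}:=\varphi_{\underline{w}}\circ \LL_{\underline{x},\boldsymbol{e}}$ (which factors through $B_{\underline{x}}^w$ by Lemma~\ref{lem:left adjointness of upper I}) is an isomorphism of graded left $R$-modules, and then to derive (3) from~(1).

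The first step is to observe that Proposition~\ref{prop:dual basis of light leaves} together with $\varphi_{\underline{w}}(u_{\underline{w}})=1$ says exactly that the $R$-matrix $A_{\boldsymbol{e},\boldsymbol{f}}:=\overline{\LL}_{\underline{x},\boldsymbol{e}}(b^w_{\underline{x},\boldsymbol{f}})$ is upper-unitriangular with respect to the total order $<_{\underline{x},w}$, and therefore invertible over $R$. Writing $F_w\subseteq B_{\underline{x}}^w$ for the left-$R$-submodule generated by $\{b^w_{\underline{x},\boldsymbol{e}}\}$, this immediately shows that $\Phi|_{F_w}$ is an isomorphism of graded $R$-modules; in particular $F_w$ is graded free with basis $\{b^w_{\underline{x},\boldsymbol{e}}\}$ of graded rank $\sum v^{d(\boldsymbol{e})+\ell(w)}=v^{\ell(w)}p_{\underline{x}}^w(v)$, using the degree computation $\deg(b_{\underline{x},\boldsymbol{e}})=-d(\boldsymbol{e})-\ell(w)$ from the remark following Proposition~\ref{prop:dual basis of light leaves}.

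Next, I will upgrade this to $\Phi$ itself. Surjectivity is automatic because $\Phi|_{F_w}$ is already surjective. For injectivity, I apply $-\otimes_R Q$: by Lemma~\ref{lem:Q-dimension of BS module} and Lemma~\ref{lem:defect formula}, both $B_{\underline{x}}^w\otimes Q$ and the localized target are $Q$-modules of dimension $p_{\underline{x}}^w(1)$, so the surjection $\Phi\otimes Q$ is forced to be an isomorphism; since $B_{\underline{x}}^w\subseteq (B_{\underline{x}})_Q^w$ is torsion-free as a left $R$-module, this forces $\ker\Phi=0$. Hence $\Phi$ is an isomorphism, and then the composition $F_w\hookrightarrow B_{\underline{x}}^w\xrightarrow{\Phi}\bigoplus R_w(\cdots)$ equals $\Phi|_{F_w}$, an isomorphism, which forces the inclusion $F_w\hookrightarrow B_{\underline{x}}^w$ to be an isomorphism too. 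This gives (1) and (2).

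Finally, for (3), Lemma~\ref{lem:left adjointness of upper I} gives $\Hom^\bullet_{\mathcal{C}}(B_{\underline{x}},R_w)=\Hom^\bullet_{\mathcal{C}}(B_{\underline{x}}^w,R_w)$; since both sides are supported at $\{w\}$, the identity $mf=w(f)m$ shows that a bimodule homomorphism is the same data as a left-$R$-linear map, so this further equals $\Hom^\bullet_R(B_{\underline{x}}^w,R)$. Part (1) provides a graded $R$-basis of $B_{\underline{x}}^w$ and hence a dual graded basis of this $\Hom$, and Proposition~\ref{prop:dual basis of light leaves} expresses $\{\varphi_{\underline{w}}\circ\LL_{\underline{x},\boldsymbol{e}}\}$ in that dual basis by the same upper-unitriangular matrix $A$, hence gives another basis. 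The step I expect to be the trickiest is actually setting up $\Phi$ in this combined form: a more naive filtration-by-Bruhat-ideals approach together with Lemma~\ref{lem:filtration, graded rank} does not work directly, because $\sum_w v^{\ell(w)}p_{\underline{x}}^w(v)\neq (v+v^{-1})^l=\grk(B_{\underline{x}})$, so one cannot obtain the graded ranks of all the $B_{\underline{x}}^w$ by a single filtration of $B_{\underline{x}}$; localizing the attention to one $w$ at a time through $\Phi$, and leveraging both the invertibility of the unitriangular matrix $A$ and the torsion-freeness of $B_{\underline{x}}^w$, is what makes the argument go through.
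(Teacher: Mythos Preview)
Your proof is correct and follows essentially the same approach as the paper's: both use the unitriangularity from Proposition~\ref{prop:dual basis of light leaves} to see that the $b^w_{\underline{x},\boldsymbol{e}}$ span a free summand, and then kill the complement via the $Q$-dimension count (Lemmas~\ref{lem:Q-dimension of BS module} and~\ref{lem:defect formula}) together with torsion-freeness of $B_{\underline{x}}^w\subset (B_{\underline{x}})_Q^w$. The only cosmetic difference is packaging: the paper inverts the unitriangular matrix to produce modified functionals $\psi'_{\boldsymbol{e}}$ giving an explicit splitting $B_{\underline{x}}^w=F_w\oplus N$ with $N=0$, whereas you encode the same data as the single map $\Phi$ and argue that $\Phi$ is an isomorphism.
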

\begin{proof}
By Proposition~\ref{prop:dual basis of light leaves}, we have
\[
\varphi_{\underline{w}}(\LL_{\underline{x},\boldsymbol{e}}(b_{\underline{x},\boldsymbol{f}})) = 
\begin{cases}
1 & (\boldsymbol{f} = \boldsymbol{e}),\\
0 & (\boldsymbol{f} < \boldsymbol{e}).
\end{cases}
\]
Since $\varphi_{\underline{w}}\circ \LL_{\underline{x},\boldsymbol{e}}\colon B_{\underline{x}}\to R_w$ is a homomorphism in $\mathcal{C}$, it induces $\psi_{\boldsymbol{e}}\colon B_{\underline{x}}^w\to R_w$ and we have
\[
\psi_{\boldsymbol{e}}(b_{\underline{x},\boldsymbol{f}}^w) = 
\begin{cases}
1 & (\boldsymbol{f} = \boldsymbol{e}),\\
0 & (\boldsymbol{f} < \boldsymbol{e}).
\end{cases}
\]
Inductively on $\boldsymbol{e}$, we can take $\psi'_{\boldsymbol{e}} \in \psi_{\boldsymbol{e}} + \sum_{\boldsymbol{e}' > \boldsymbol{e}}R\psi_{\boldsymbol{e}'}$ such that $\psi'_{\boldsymbol{e}}(b_{\underline{x},\boldsymbol{f}}^w) = \delta_{\boldsymbol{e},\boldsymbol{f}}$ (Kronecker's delta).
Namely $m\mapsto \sum_{\underline{x}^{\boldsymbol{e}} = w}\psi'_{\boldsymbol{e}}(m)b_{\underline{x},\boldsymbol{e}}^w$ gives a splitting of the embedding $\bigoplus_{\underline{x}^{\boldsymbol{e}} = w}Rb_{\underline{x},\boldsymbol{e}}^w\hookrightarrow B_{\underline{x}}^w$.

Take $N$ such that $B_{\underline{x}}^w = (\bigoplus_{\underline{x}^{\boldsymbol{e}} = w}Rb_{\underline{x},\boldsymbol{e}}^w)\oplus N$.
Then we have $(B_{\underline{x}}^w)_Q = (\bigoplus_{\underline{x}^{\boldsymbol{e}} = w}Qb_{\underline{x},\boldsymbol{e}}^w)\oplus N_Q$.
We have $\dim_Q (B_{\underline{x}}^w)_Q = p_{\underline{x}}^w(1)$ by Lemma~\ref{lem:Q-dimension of BS module}.
By Lemma~\ref{lem:defect formula}, we also have $\dim_Q(\bigoplus_{\underline{x}^{\boldsymbol{e}} = w}Qb_{\underline{x},\boldsymbol{e}}^w) = p_{\underline{x}}^w(1)$.
Hence $N_Q = 0$.
Namely $N$ is a torsion module.
Since $N\subset B_{\underline{x}}^w\subset (B_{\underline{x}})_Q^w$, $N$ is a torsion-free module.
Hence $N = 0$.

The second part follows from (1).

We prove (3).
Since $\{b_{\underline{x},\boldsymbol{e}}^w\}$ is a basis of $B_{\underline{x}}^w$, $\{\psi'_{\boldsymbol{e}}\}$ is a basis of $\Hom_R(B_{\underline{x}}^w,R)$ which is dual to $\{b_{\underline{x},\boldsymbol{e}}\}$.
By $\psi'_{\boldsymbol{e}} \in \psi_{\boldsymbol{e}} + \sum_{\boldsymbol{e}' > \boldsymbol{e}}R\psi_{\boldsymbol{e}'}$, $\{\psi_{\boldsymbol{e}}\}$ is also a basis.
Since $\Hom_R(B_{\underline{x}}^w,R)\simeq \Hom_{\mathcal{C}}(B_{\underline{x}},R_w)$ and $\psi_{\boldsymbol{e}}$ corresponds to $\varphi_{\underline{w}}\circ \LL_{\underline{x},\boldsymbol{e}}$ by the definition of $\psi_{\boldsymbol{e}}$, we get (3).
\end{proof}

With Lemma~\ref{lem:graded rank of the dual}, we have the following corollary.
\begin{cor}\label{cor:graded rank of B_w}
The left $R$-module $B_{\underline{x},w}$ is graded free and its graded rank is given by $v^{-\ell(w)}p_{\underline{x}}^w(v^{-1})$.
\end{cor}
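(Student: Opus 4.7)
The plan is to combine Theorem~\ref{thm:graded rank of B_x^w} with the self-duality of $B_{\underline{x}}$ established just before Section~\ref{sec:Light leaves} and the behaviour of $D$ on the ``lower star'' piece given in Lemma~\ref{lem:graded rank of the dual}. All the essential work has already been done; this corollary is a formal consequence of pairing those two facts.

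More precisely, first I would observe that Theorem~\ref{thm:graded rank of B_x^w}(2) guarantees that $(B_{\underline{x}})^w$ is graded free as a left $R$-module with graded rank $v^{\ell(w)}p_{\underline{x}}^w(v)$. Hence the hypothesis of Lemma~\ref{lem:graded rank of the dual} is satisfied with $M=B_{\underline{x}}$, and that lemma yields that $D(B_{\underline{x}})_w$ is graded free with
\[
\grk\bigl(D(B_{\underline{x}})_w\bigr)(v)=\grk\bigl((B_{\underline{x}})^w\bigr)(v^{-1})=v^{-\ell(w)}p_{\underline{x}}^w(v^{-1}).
\]

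Next I would invoke the self-duality $D(B_{\underline{x}})\simeq B_{\underline{x}}$ proved in the last lemma of Section~\ref{sec:The category}. Because this isomorphism lives in $\mathcal{C}$ (equivalently, it respects the decomposition after tensoring with $Q$), it restricts to an isomorphism of the ``$w$-stalks'' in the sense used in this paper, i.e.\ $D(B_{\underline{x}})_w\simeq (B_{\underline{x}})_w=B_{\underline{x},w}$. Transporting the graded freeness and the graded rank through this isomorphism gives the assertion.

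There is no real obstacle: the only thing to be mildly careful about is that the notation $B_{\underline{x},w}$ stands for $(B_{\underline{x}})_w$ (the lower-star piece, the \emph{kernel} of the projection away from $w$), not the upper-star piece $(B_{\underline{x}})^w$ that Theorem~\ref{thm:graded rank of B_x^w} directly describes; the role of duality is precisely to convert the known information about $(B_{\underline{x}})^w$ into information about $(B_{\underline{x}})_w$, and the factor $v\mapsto v^{-1}$ built into Lemma~\ref{lem:graded rank of the dual}(1) is what turns $v^{\ell(w)}p_{\underline{x}}^w(v)$ into $v^{-\ell(w)}p_{\underline{x}}^w(v^{-1})$.
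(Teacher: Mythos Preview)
Your argument is correct and matches the paper's own reasoning exactly: the paper introduces this corollary with the single sentence ``With Lemma~\ref{lem:graded rank of the dual}, we have the following corollary,'' and the steps you spell out (Theorem~\ref{thm:graded rank of B_x^w}(2) for the graded rank of $(B_{\underline{x}})^w$, Lemma~\ref{lem:graded rank of the dual}(1) to pass to $D(B_{\underline{x}})_w$, and the self-duality $D(B_{\underline{x}})\simeq B_{\underline{x}}$ from the last lemma of Section~\ref{sec:The category}) are precisely the intended ones.
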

\begin{cor}\label{cor:top of B_bar w}
The homomorphism $\varphi_{\underline{w}}\colon B_{\underline{w}}\to R_w(\ell(w))$ induces $B_{\underline{w}}^w\simeq R_w(\ell(w))$.
\end{cor}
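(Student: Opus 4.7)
The plan is to specialize Theorem~\ref{thm:graded rank of B_x^w} to the case $\underline{x} = \underline{w}$. The crucial observation is that because $\underline{w} = (t_1,\dots,t_r)$ is a reduced expression of $w$, the only sequence $\boldsymbol{e}\in\{0,1\}^{\ell(w)}$ with $\underline{w}^{\boldsymbol{e}} = w$ is $\boldsymbol{e} = (1,1,\dots,1)$: along a reduced expression the partial products $w_k$ strictly grow in length at every step, so the only way to reach $w$ is to multiply by every generator. For this $\boldsymbol{e}$, every label is $U$ and every $\boldsymbol{e}_i = 1$, so $d(\boldsymbol{e}) = 0$ and $b_{\underline{w},\boldsymbol{e}} = (1\otimes 1)\otimes\dots\otimes (1\otimes 1) = u_{\underline{w}}$.

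With this in hand, Theorem~\ref{thm:graded rank of B_x^w}(1) says that $B_{\underline{w}}^w$ is a graded free left $R$-module of rank one with basis the image $u_{\underline{w}}^w$ of $u_{\underline{w}}$. Since $u_{\underline{w}}$ sits in degree $-\ell(w)$, this matches the graded rank $v^{\ell(w)}$ of $R_w(\ell(w))$, consistent with Theorem~\ref{thm:graded rank of B_x^w}(2) and $p_{\underline{w}}^w(v) = 1$.

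To conclude, I would apply Lemma~\ref{lem:left adjointness of upper I}: since $\supp_W(R_w(\ell(w))) = \{w\}$, the map $\varphi_{\underline{w}}\colon B_{\underline{w}}\to R_w(\ell(w))$ factors uniquely through $\bar\varphi_{\underline{w}}\colon B_{\underline{w}}^w\to R_w(\ell(w))$. By definition $\varphi_{\underline{w}}(u_{\underline{w}}) = 1$, so $\bar\varphi_{\underline{w}}$ sends the free generator $u_{\underline{w}}^w$ (of degree $-\ell(w)$) to the free generator $1$ of $R_w(\ell(w))$ (also of degree $-\ell(w)$). As both source and target are graded free left $R$-modules of rank one and $\bar\varphi_{\underline{w}}$ matches generators, it is an isomorphism in $\mathcal{C}$.

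This corollary is essentially a direct specialization of Theorem~\ref{thm:graded rank of B_x^w}, so there is no substantive obstacle; the only mildly delicate point is the bookkeeping check that the reducedness of $\underline{w}$ forces the only surviving $\boldsymbol{e}$ to be the all-ones sequence with $d(\boldsymbol{e}) = 0$, which is what pins down both the rank and the degree shift.
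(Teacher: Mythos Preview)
Your proof is correct and follows essentially the same approach as the paper's: both use Theorem~\ref{thm:graded rank of B_x^w} to see that $B_{\underline{w}}^w$ is free of rank one, factor $\varphi_{\underline{w}}$ through $B_{\underline{w}}^w$, and conclude by matching generators (the paper phrases this last step as ``surjective plus rank one implies isomorphism''). Your version is more explicit in spelling out why the only admissible $\boldsymbol{e}$ is the all-ones sequence, which the paper leaves implicit.
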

\begin{proof}
Since $\varphi_{\underline{w}}$ is a homomorphism in $\mathcal{C}$, this induces $B_{\underline{w}}^w\to R_w(\ell(w))$.
This is obviously surjective.
By the above theorem, $B_{\underline{w}}^w$ is free of rank one.
Hence $B_{\underline{w}}^w\twoheadrightarrow R_w(\ell(w))$ is an isomorphism.
\end{proof}

\begin{cor}\label{cor:projectibity of Soergel bimodule}
Let $w\in W$ with a reduced expression $\underline{w}$ and $B\in \Sbimod$.
Then $\Hom_{\mathcal{C}}(B,B_{\underline{w}})\to \Hom_R(B^w,B_{\underline{w}}^w)$ is surjective.
\end{cor}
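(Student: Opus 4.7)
The plan is to reduce to the Bott--Samelson case $B = B_{\underline{x}}$ and then to write down an explicit lift as an $R$-linear combination of light leaves, using Theorem~\ref{thm:graded rank of B_x^w}(3) as the main input.

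For the reduction, I would pick $\underline{x}\in S^l$ such that $B$ is a direct summand of $B_{\underline{x}}$, with inclusion $\iota\colon B\hookrightarrow B_{\underline{x}}$ and retraction $\pi\colon B_{\underline{x}}\twoheadrightarrow B$ satisfying $\pi\circ\iota = \Id_B$. These induce $\pi^w\circ\iota^w = \Id_{B^w}$ on $w$-stalks. Given $\varphi\colon B^w\to B_{\underline{w}}^w$, if we can lift the composition $\varphi\circ \pi^w\colon B_{\underline{x}}^w\to B_{\underline{w}}^w$ to some $\Psi\colon B_{\underline{x}}\to B_{\underline{w}}$ in $\mathcal{C}$, then $\Psi\circ\iota$ will be the desired lift of $\varphi$, because $(\Psi\circ\iota)^w = \Psi^w\circ\iota^w = \varphi\circ\pi^w\circ\iota^w = \varphi$. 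Hence it suffices to treat $B = B_{\underline{x}}$.

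In that case, Corollary~\ref{cor:top of B_bar w} identifies $B_{\underline{w}}^w$ with $R_w(\ell(w))$ via $\varphi_{\underline{w}}$, so $\varphi$ may be regarded as a left $R$-module map $B_{\underline{x}}^w\to R_w(\ell(w))$; by Lemma~\ref{lem:left adjointness of upper I} it extends uniquely to some $\tilde\varphi\colon B_{\underline{x}}\to R_w(\ell(w))$ in $\mathcal{C}$. By Theorem~\ref{thm:graded rank of B_x^w}(3), the family $\{\varphi_{\underline{w}}\circ \LL_{\underline{x},\boldsymbol{e}}\mid \underline{x}^{\boldsymbol{e}} = w\}$ is an $R$-basis of $\Hom_{\mathcal{C}}^{\bullet}(B_{\underline{x}},R_w)$, so I can expand $\tilde\varphi = \sum_{\underline{x}^{\boldsymbol{e}} = w} c_{\boldsymbol{e}}\cdot(\varphi_{\underline{w}}\circ \LL_{\underline{x},\boldsymbol{e}})$ with homogeneous $c_{\boldsymbol{e}}\in R$ of the right degrees, and define $\Psi = \sum_{\underline{x}^{\boldsymbol{e}} = w} c_{\boldsymbol{e}}\cdot \LL_{\underline{x},\boldsymbol{e}}\colon B_{\underline{x}}\to B_{\underline{w}}$ using the $R$-bimodule action on the Hom space. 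Post-composing $\Psi$ with $\varphi_{\underline{w}}$ recovers $\tilde\varphi$, so after identifying $B_{\underline{w}}^w$ with $R_w(\ell(w))$, the stalk $\Psi^w$ agrees with $\varphi$.

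All of the real substance of the argument is contained in Theorem~\ref{thm:graded rank of B_x^w}(3), which asserts the non-trivial fact that every morphism $B_{\underline{x}}\to R_w$ in $\mathcal{C}$ already arises as a combination of the $\varphi_{\underline{w}}\circ \LL_{\underline{x},\boldsymbol{e}}$; once that is available, the corollary is essentially formal. The only minor bookkeeping point I foresee is ensuring the coefficients $c_{\boldsymbol{e}}$ are chosen homogeneously of the correct degrees so that $\Psi$ is a well-defined degree-zero morphism in $\mathcal{C}$, but this is automatic from the homogeneity of $\tilde\varphi$ and the fact that the light-leaves basis is homogeneous.
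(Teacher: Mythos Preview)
Your argument is correct and follows essentially the same route as the paper's own proof: reduce to $B = B_{\underline{x}}$, identify $B_{\underline{w}}^w$ with $R_w(\ell(w))$ via Corollary~\ref{cor:top of B_bar w}, and then invoke Theorem~\ref{thm:graded rank of B_x^w}(3) to write every morphism $B_{\underline{x}}\to R_w$ as an $R$-combination of the $\varphi_{\underline{w}}\circ\LL_{\underline{x},\boldsymbol{e}}$, thereby obtaining the lift $\Psi=\sum c_{\boldsymbol{e}}\LL_{\underline{x},\boldsymbol{e}}$. The paper compresses all of this into two lines (noting $\Hom_R(B^w,B_{\underline{w}}^w)\simeq\Hom_{\mathcal{C}}(B,B_{\underline{w}}^w)$ and citing the two results), but your expanded version spells out exactly the same mechanism.
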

\begin{proof}
Note that $\Hom_R(B^w,B_{\underline{w}}^w)\simeq \Hom_{\mathcal{C}}(B,B_{\underline{w}}^w)$.
We may assume $B = B_{\underline{x}}$.
The corollary is clear from Theorem~\ref{thm:graded rank of B_x^w} (3) and Corollary~\ref{cor:top of B_bar w}.
\end{proof}

\subsection{Elements supported on a closed subset}
We call a subset $I\subset W$ closed if $w_1\in I$, $w_2\in W$, $w_2\le w_1$ implies $w_2\in I$.
\begin{lem}
Let $I$ be a finite closed subset and $w\in I$ a maximal element with respect to the Bruhat order.
Then there exists a enumeration $w_1,w_2,\ldots$ of elements of $W$ such that $\{w_1,\dots,w_i\}$ is closed for any $i$, $w = w_{\#I}$ and $I = \{w_1,\dots,w_{\#I}\}$.
\end{lem}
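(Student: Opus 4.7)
The plan is to split the construction into two independent stages: first enumerate $I$ itself so that $w$ lands in the last position, then tack on an enumeration of $W\setminus I$ in order of length.

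\textbf{Stage 1} (enumerating $I$ with $w$ last): Proceed by induction on $n = \#I$, the case $n=0$ being vacuous. The key observation is that $J := I\setminus\{w\}$ is again closed. Indeed, if $v\in J$ and $u\le v$, then $u\in I$ by closedness of $I$; and $u = w$ would force $w\le v$ with $v\ne w$, i.e.\ $w<v$, contradicting the maximality of $w$ in $I$. Hence $u\in J$. Now pick any maximal element $w'$ of $J$ (if $J\ne \emptyset$) and apply the induction hypothesis to $(J,w')$ to obtain an enumeration $w_1,\dots,w_{n-1}$ of $J$ whose initial segments are all closed. Set $w_n := w$. Each initial segment of length $<n$ is closed by induction, and the segment of length $n$ equals $I$, which is closed by hypothesis.

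\textbf{Stage 2} (extending to all of $W$): Since $\#S<\infty$, for every integer $\ell\ge 0$ the set $W_\ell := \{v\in W\mid \ell(v)=\ell\}$ is finite (bounded, say, by $(\#S)^\ell$). Enumerate $W\setminus I$ in order of non-decreasing length, using any fixed order inside each $W_\ell$; call this sequence $v_1,v_2,\dots$ and set $w_{n+j} := v_j$. To check closedness of $\{w_1,\dots,w_{n+j}\} = I\cup\{v_1,\dots,v_j\}$, take any $u\le v_j$. If $u\in I$ we are done; otherwise $u\in W\setminus I$, and because the Bruhat order is compatible with length (with $u\le v$ and $\ell(u)=\ell(v)$ forcing $u=v$), either $u=v_j$ or $\ell(u)<\ell(v_j)$, so $u$ has already appeared in the enumeration by construction.

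The only real content of the argument is the observation in Stage 1 that removing a maximal element from a closed set preserves closedness; everything else is bookkeeping, so I do not anticipate a genuine obstacle. The maximality of $w$ is used exactly once, precisely to justify this observation and thereby allow $w$ to be placed in position $\#I$.
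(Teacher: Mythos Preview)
Your proof is correct and follows the same strategy as the paper's: enumerate $I\setminus\{w\}$ compatibly with the Bruhat order, place $w$ in position $\#I$, then enumerate $W\setminus I$ compatibly with the Bruhat order. The paper phrases both stages uniformly via linear extensions of $\le$ (declaring the case $i\le k$ ``obvious'') rather than your induction in Stage~1 and length ordering in Stage~2, but these amount to the same argument.
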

\begin{proof}
Set $k = \#I$.
Let $w_1,\dots,w_{k - 1}$ be an enumeration of elements of $I\setminus \{w\}$ such that $w_i\le w_j$ implies $i\le j$.
Let $w_{k + 1},\dots$ be a similar enumeration of elements of $W\setminus I$ and put $w_k = w$.
We prove that $\{w_1,\dots,w_i\}$ is always closed.
If $i \le k$ then it is obvious.
Assume that $i > k$ and $w_j\le w_i$.
If $j\le k$, then we have $j\le i$.
If $j > k$, then we have $j\le i$ by the assumption on the enumeration $w_{k + 1},\dots$.
In any case, $w_j\in \{1,\dots,w_i\}$.
\end{proof}

A light leaf $\LL_{\underline{x},\boldsymbol{e}}\colon B_{\underline{x}}\to B_{\underline{w}}$ gives a homomorphism $B_{\underline{w}}\simeq D(B_{\underline{w}})\xrightarrow{D(\LL_{\underline{x},\boldsymbol{e}})} D(B_{\underline{x}})\simeq B_{\underline{x}}$.
We denote this homomorphism by $\LL_{\underline{x},\boldsymbol{e}}^*$.
Let $\pi_{\underline{x}}^w\colon B_{\underline{x}}\to B_{\underline{x}}^w$ be the projection.
If $I\subset W$ and $w\in I$, then $B_{\underline{x},I\setminus\{w\}}$ is the kernel of $B_{\underline{x},I}\to B_{\underline{x}}^w$.
Therefore we also write $\pi_{\underline{x}}^w$ for the projection $B_{\underline{x},I}\to B_{\underline{x},I}/B_{\underline{x},I\setminus\{w\}}$.

\begin{thm}\label{thm:filtration theorem}
Let $I$ be a closed subset and $w$ a maximal element in $I$.
Set $I' = I\setminus\{w\}$.
Let $\underline{w}$ be a reduced expression of $w$ and $\underline{x}\in S^l$.
Then $\{\pi_{\underline{x}}^w(\LL_{\underline{x},\boldsymbol{e}}^*(u_{\underline{w}}))\mid \underline{x}^{\boldsymbol{e}} = w\}$ is a basis of the left $R$-module $B_{\underline{x},I}/B_{\underline{x},I'}$.
\end{thm}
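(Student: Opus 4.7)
The plan is to combine a support argument with the filtration Lemma~\ref{lem:filtration, graded rank}, reducing the theorem to a graded-rank computation and an $R$-linear independence statement which is then attacked via the pairing of Theorem~\ref{thm:graded rank of B_x^w}(3).

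First, I would verify that every element $\LL_{\underline{x},\boldsymbol{e}}^{*}(u_{\underline{w}})$ actually lies in $B_{\underline{x},I}$. Because $D$ is a functor on $\mathcal{C}$ and the isomorphism $D(B_{\underline{x}})\simeq B_{\underline{x}}$ respects the $y$-decomposition, the map $\LL_{\underline{x},\boldsymbol{e}}^{*}\colon B_{\underline{w}}\to B_{\underline{x}}$ is a morphism in $\mathcal{C}$; in particular its image has $\supp_W$ contained in $\supp_W(B_{\underline{w}})=\{y:y\le w\}$, which is a subset of $I$ since $I$ is closed and contains $w$. Hence $\LL_{\underline{x},\boldsymbol{e}}^{*}(u_{\underline{w}})\in B_{\underline{x},\{y\le w\}}\subseteq B_{\underline{x},I}$, and $\pi_{\underline{x}}^{w}$ can be applied.

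Next, I would set up a filtration. Using the preceding lemma, pick an enumeration $w_1,\dots,w_N$ of $W$ so that each $I_i:=\{w_1,\dots,w_i\}$ is closed. In the chain $0=B_{\underline{x},I_0}\subset B_{\underline{x},I_1}\subset\cdots\subset B_{\underline{x},I_N}=B_{\underline{x}}$ I intend to apply Lemma~\ref{lem:filtration, graded rank}: it suffices to produce inside each subquotient $B_{\underline{x},I_i}/B_{\underline{x},I_{i-1}}$ a graded free $R$-submodule with graded rank $v^{\ell(w_i)}p_{\underline{x}}^{w_i}(v^{-1})$, because by Lemma~\ref{lem:sum of p, for filtration} these ranks sum to $(v+v^{-1})^{l}=\grk(B_{\underline{x}})$ (the last equality is Lemma~\ref{lem:graded rank of B}), forcing each submodule to equal the corresponding subquotient. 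The degree of $\LL_{\underline{x},\boldsymbol{e}}^{*}(u_{\underline{w}})$ equals $d(\boldsymbol{e})-\ell(w)$ (since $D$ preserves morphism degrees and $\deg u_{\underline{w}}=-\ell(w)$), so the graded rank of the free $R$-module on the listed elements is $\sum_{\underline{x}^{\boldsymbol{e}}=w}v^{\ell(w)-d(\boldsymbol{e})}=v^{\ell(w)}p_{\underline{x}}^{w}(v^{-1})$ by Lemma~\ref{lem:defect formula}. The entire proof therefore reduces to showing $R$-linear independence of $\{\pi_{\underline{x}}^{w}(\LL_{\underline{x},\boldsymbol{e}}^{*}(u_{\underline{w}}))\}_{\underline{x}^{\boldsymbol{e}}=w}$ inside $B_{\underline{x},I}/B_{\underline{x},I'}$.

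To prove linear independence I would pair the candidate generators against the morphisms $\varphi_{\underline{w}}\circ\LL_{\underline{x},\boldsymbol{f}}$ furnished by Theorem~\ref{thm:graded rank of B_x^w}(3). Suppose $\sum_{\boldsymbol{e}}r_{\boldsymbol{e}}\LL_{\underline{x},\boldsymbol{e}}^{*}(u_{\underline{w}})\in B_{\underline{x},I'}$; since elements of $B_{\underline{x},I'}$ have vanishing $w$-component and each $\varphi_{\underline{w}}\circ\LL_{\underline{x},\boldsymbol{f}}$ factors through $B_{\underline{x}}^{w}$, we obtain $\sum_{\boldsymbol{e}}r_{\boldsymbol{e}}c_{\boldsymbol{e},\boldsymbol{f}}=0$ for all $\boldsymbol{f}$ with $\underline{x}^{\boldsymbol{f}}=w$, where $c_{\boldsymbol{e},\boldsymbol{f}}:=\varphi_{\underline{w}}(\LL_{\underline{x},\boldsymbol{f}}(\LL_{\underline{x},\boldsymbol{e}}^{*}(u_{\underline{w}})))\in R$. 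Because $B_{\underline{w}}^{w}\simeq R_w(\ell(w))$ is of rank one (Corollary~\ref{cor:top of B_bar w}), the composition $\LL_{\underline{x},\boldsymbol{f}}\circ\LL_{\underline{x},\boldsymbol{e}}^{*}$ restricted to the $w$-part is scalar multiplication, and $c_{\boldsymbol{e},\boldsymbol{f}}$ is precisely this scalar.

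The main obstacle is to show that the square matrix $(c_{\boldsymbol{e},\boldsymbol{f}})_{\underline{x}^{\boldsymbol{e}}=\underline{x}^{\boldsymbol{f}}=w}$ is invertible. I would attack this by exploiting the adjointness $\langle\LL_{\underline{x},\boldsymbol{e}}^{*}(u),v\rangle_{\underline{x}}=\langle u,\LL_{\underline{x},\boldsymbol{e}}(v)\rangle_{\underline{w}}$ coming from the construction $\LL^{*}=\Phi\circ D(\LL)\circ\Phi'^{-1}$ together with the fundamental triangularity of Proposition~\ref{prop:dual basis of light leaves} (which says $\LL_{\underline{x},\boldsymbol{f}}(b_{\underline{x},\boldsymbol{e}})=u_{\underline{w}}\delta_{\boldsymbol{e},\boldsymbol{f}}$ whenever $\boldsymbol{e}\le\boldsymbol{f}$). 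Combining these with a careful tracking of how $\LL^{*}$ interacts with the basis $\{b_{\underline{x},\boldsymbol{e}}\}$ through the duality isomorphism $\Phi$ should yield a triangular structure on $(c_{\boldsymbol{e},\boldsymbol{f}})$ sufficient to conclude invertibility. This is exactly the step where, classically, one invokes Soergel's categorification theorem via Libedinsky's argument; reproving it directly here is the technical heart of the proof, and once established, Lemma~\ref{lem:filtration, graded rank} immediately delivers the claimed basis.
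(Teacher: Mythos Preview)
Your overall architecture---support argument, then linear independence, then the filtration/rank count via Lemma~\ref{lem:filtration, graded rank} and Lemma~\ref{lem:sum of p, for filtration}---is exactly the paper's. The gap is in the linear-independence step. You reduce to showing that the square matrix $(c_{\boldsymbol{e},\boldsymbol{f}})$ is invertible and then say that tracking $\LL^{*}$ against the duality isomorphism ``should yield a triangular structure''; but you do not actually establish this, and it is not clear that the pairing $c_{\boldsymbol{e},\boldsymbol{f}}=\varphi_{\underline{w}}(\LL_{\underline{x},\boldsymbol{f}}(\LL_{\underline{x},\boldsymbol{e}}^{*}(u_{\underline{w}})))$ is triangular in any obvious sense---Proposition~\ref{prop:dual basis of light leaves} controls $\LL_{\underline{x},\boldsymbol{f}}$ only on the elements $b_{\underline{x},\boldsymbol{e}}$, not on the images $\LL_{\underline{x},\boldsymbol{e}}^{*}(u_{\underline{w}})$, and relating the two through the self-duality $D(B_{\underline{x}})\simeq B_{\underline{x}}$ is not straightforward.

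The paper bypasses this computation entirely. From Theorem~\ref{thm:graded rank of B_x^w}(3) the composites $B_{\underline{x}}\xrightarrow{\LL_{\underline{x},\boldsymbol{e}}}B_{\underline{w}}\to B_{\underline{w}}^{w}$ are $R$-linearly independent. Applying $D$ and using Lemma~\ref{lem:duality on local} gives $R$-linearly independent maps $B_{\underline{w},w}\to B_{\underline{x}}$, which factor through $B_{\underline{x},w}$. Tensoring with $Q$ preserves independence (torsion-freeness), and since $B_{\underline{w},w}\otimes_RQ$ is one-dimensional, linearly independent maps out of it have linearly independent images at any nonzero vector---in particular at the image of $u_{\underline{w}}$. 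A commutative diagram then identifies these images with the $\pi_{\underline{x}}^{w}(\LL_{\underline{x},\boldsymbol{e}}^{*}(u_{\underline{w}}))$. This is both shorter and avoids any matrix analysis. A second, smaller point: Lemma~\ref{lem:filtration, graded rank} assumes $\Coef$ is a field; the paper handles general $\Coef$ by reducing each graded piece modulo a maximal ideal $\mathfrak{m}$ and invoking finite generation, a step you omit.
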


\begin{proof}
Note that since $\{y\in W\mid y\le w\}\subset I$, we have $\supp_W(B_{\underline{w}})\subset I$.
Hence the image of any homomorphism from $B_{\underline{w}}$ to $B_{\underline{x}}$ is contained in $B_{\underline{x},I}$.
Therefore $\pi_{\underline{x}}^w(\LL^*_{\underline{x},\boldsymbol{e}}(u_{\underline{w}}))\in B_{\underline{x},I}/B_{\underline{x},I'}$.

First we prove that $\{\pi_{\underline{x}}^w(\LL_{\underline{x},\boldsymbol{e}}^*(u_{\underline{w}}))\mid \underline{x}^{\boldsymbol{e}} = w\}$ is linearly independent.
It is sufficient to prove that this set is linearly independent over $Q$.

Recall that we have homomorphisms
\[
B_{\underline{x}}\xrightarrow{\LL_{\underline{x},\boldsymbol{e}}} B_{\underline{w}}\to B_{\underline{w}}^w.
\]
The set of these elements where $\boldsymbol{e}$ satisfies $\underline{x}^{\boldsymbol{e}} = w$ is linearly independent by Theorem~\ref{thm:graded rank of B_x^w}.
Therefore the dualized maps 
\[
B_{\underline{w},w}\hookrightarrow B_{\underline{w}}\xrightarrow{\LL^*_{\underline{x},\boldsymbol{e}}}B_{\underline{x}}
\]
are also linearly independent. (Note that $B_{\underline{w},w}$ and $B_{\underline{x}}$ are both graded free as right $R$-modules, hence $D(D(B_{\underline{w},w}))\simeq B_{\underline{w},w}$ and $D(D(B_{\underline{x}}))\simeq B_{\underline{x}}$.)
This map factors through $B_{\underline{x},w}\hookrightarrow B_{\underline{x}}$.
Therefore the induced homomorphisms $B_{\underline{w},w}\to B_{\underline{x},w}$ are linearly independent.
Since $B_{\underline{x},w}$ is torsion free, the maps $ B_{\underline{w},w}\otimes_R Q\xrightarrow{\eta_{\underline{x},\boldsymbol{e},Q}}B_{\underline{x},w}\otimes_R Q$ obtained by tensoring $Q$ are also linearly independent.

The left $R$-module $B_{\underline{w}}^w$ is free of rank one by Theorem~\ref{thm:graded rank of B_x^w}.
Since $D(B_{\underline{w}}^w)\simeq B_{\underline{w},w}$, this is also true for $B_{\underline{w},w}$.
Therefore the dimension of $B_{\underline{w},w}\otimes_R Q$ and $B_{\underline{w}}^w\otimes_R Q$ are both one and hence we have $B_{\underline{w},w}\otimes_R Q\xrightarrow{\sim}B_{\underline{w}}^w\otimes_R Q$.
Therefore for any $0\ne q\in B_{\underline{w},w}\otimes_R Q\simeq B_{\underline{w}}^w\otimes_R Q$, $\{\eta_{\underline{x},\boldsymbol{e},Q}(q)\mid \underline{x}^{\boldsymbol{e}} = w\}\subset B_{\underline{x},w}\otimes_R Q$ is linearly independent.
In particular we can take $q$ as the image of $u_{\underline{w}}\in B_{\underline{w}}$.
Therefore
\[
\{\eta_{\underline{x},\boldsymbol{e},Q}(\pi_{\underline{w}}^w(u_{\underline{w}}))\mid \underline{x}^{\boldsymbol{e}} = w\}\subset  B_{\underline{x},w}\otimes_R Q\subset (B_{\underline{x},I}/B_{\underline{x},I'})\otimes_R Q
\]
is linearly independent.
Noticing the following commutative diagram
\[
\begin{tikzcd}
&[-2em] B_{\underline{w},w}\otimes_R Q\arrow[r,"\eta_{\underline{x},\boldsymbol{e},Q}"]\arrow[d,"\wr"] & B_{\underline{x},w}\otimes_R Q\arrow[d,hookrightarrow]\\
& B_{\underline{w}}^w\otimes_R Q & (B_{\underline{x},I}/B_{\underline{x},I'})\otimes_R Q\\
u_{\underline{w}} & \arrow[l,phantom,"\in"] B_{\underline{w}}\otimes_R Q\arrow[u,"\pi_{\underline{w}}^w\otimes \Id"]\arrow[r,"\LL^*_{\underline{x},\boldsymbol{e}}\otimes \Id"] & B_{\underline{x},I}\otimes_R Q\arrow[u,"\pi_{\underline{x}}^w\otimes \Id"],
\end{tikzcd}
\]
$\{\pi_{\underline{x}}^w(\LL_{\underline{x},\boldsymbol{e}}^*(u_{\underline{w}}))\mid \underline{x}^{\boldsymbol{e}} = w\}$ is linearly independent over $Q$.

Let $w_1,w_2,\dots$ be an enumeration as in the previous lemma.
Fix a reduced expression $\underline{w}_k$ of $w_k$.
Set $I(k) = \{w_1,\dots,w_k\}$.
We have a filtration $\{B_{\underline{x},I(k)}\}_k$ of $B_{\underline{x}}$ and, as we have proved in the above,
\[
\bigoplus_{\underline{x}^{\boldsymbol{e}} = w_k}R\pi_{\underline{x}}^{w_k}(\LL^*_{\underline{x},\boldsymbol{e}}(u_{\underline{w}_k}))\subset B_{\underline{x},I(k)}/B_{\underline{x},I(k - 1)}.
\]
This gives an embedding in each degree $i\in \Z$:
\begin{equation}
\varphi_{i,k}\colon\bigoplus_{\underline{x}^{\boldsymbol{e}} = w_k}(R\pi_{\underline{x}}^{w_k}(\LL^*_{\underline{x},\boldsymbol{e}}(u_{\underline{w}_k})))^i\hookrightarrow (B_{\underline{x},I(k)}/B_{\underline{x},I(k - 1)})^i.
\end{equation}
We have $\deg(\LL^*_{\underline{x},\boldsymbol{e}}) = d(\boldsymbol{e})$ and $\deg(u_{\underline{w}_k}) = -\ell(w_k)$.
Therefore the graded rank of $\bigoplus_{\underline{x}^{\boldsymbol{e}} = w_k}R\pi_{\underline{x}}^w(\LL^*_{\underline{x},\boldsymbol{e}}(u_{\underline{w}_k}))$ is $\sum_{\underline{x}^{\boldsymbol{e}} = w_k}v^{-d(\boldsymbol{e}) + \ell(w_k)} = v^{\ell(w_k)}p_{\underline{x}}^{w_k}(v^{-1})$.
By Lemma~\ref{lem:sum of p, for filtration}, the sum $\sum_k v^{\ell(w_k)}p_{\underline{x}}^{w_k}(v^{-1})$ is $(v + v^{-1})^l$  and this is equal to the graded rank of $B_{\underline{x}}$ by Lemma~\ref{lem:graded rank of B}.
Hence, if $\Coef$ is a field, $\varphi_{i,k}$ is an isomorphism by Lemma~\ref{lem:filtration, graded rank}.
Therefore, for any (netherian integral domain) $\Coef$, $\varphi_{i,k}\otimes(\Coef/\mathfrak{m})$ is an isomorphism for any maximal ideal $\mathfrak{m}\subset \Coef$.
Since $B_{\underline{x},I(k)}/B_{\underline{x},I(k - 1)}$ is a subquotient of $B_{\underline{x}}$ and $B_{\underline{x}}$ is a finitely generated $R$-module, $B_{\underline{x},I(k)}/B_{\underline{x},I(k - 1)}$ is also a finitely generated $R$-module.
Hence the $i$-th graded piece is finitely generated over $\Coef$.
Therefore $\varphi_{i,k}$ is an isomorphism for all $i,k$.
\end{proof}

For $M\in \mathcal{C}$ and $w\in W$, we put $M_{\le w} = M_{\{y\in W\mid y\le w\}}$.
We define $M_{<w}$ in the obvious way.

\begin{cor}\label{cor:on filtration}
Let $B\in \Sbimod$.
\begin{enumerate}
\item Let $I\subset W$ be a closed subset and $w\in I$ a maximal element. Then we have $B_{\le w}/B_{<w}\xrightarrow{\sim}B_{I}/B_{I\setminus \{w\}}$.
\item Let $\underline{w}$ be a reduced expression of $w\in W$. Then the map $\Hom_{\mathcal{C}}(B_{\underline{w}},B)\to \Hom_R(B_{\underline{w}}^w,B_{\le w}/B_{<w})$ is surjective.
\end{enumerate}
\end{cor}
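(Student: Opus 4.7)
My plan is to reduce both parts to the case $B = B_{\underline{x}}$ and then invoke Theorem~\ref{thm:filtration theorem} directly. For the reduction, the constructions $M \mapsto M_I$ and $M \mapsto M_{\le w}, M_{<w}$ are all defined through the $Q$-decomposition of $M$, so they commute with finite direct sums in $\mathcal{C}$. Consequently, when $B = B' \oplus B''$, both the natural comparison map in~(1) and the restriction map in~(2) split as direct sums of the analogous maps for $B'$ and $B''$; thus if the statement holds for a Bott--Samelson bimodule it automatically holds for every direct summand, and one may assume $B = B_{\underline{x}}$ throughout.

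For part~(1), I would set $J = \{y \in W \mid y \le w\}$ and $J' = J \setminus\{w\}$. Since $I$ is closed and contains $w$, we have $J \subset I$, and since $J \cap (I\setminus\{w\}) = J'$, the intersection $B_{\underline{x}, J} \cap B_{\underline{x}, I\setminus\{w\}}$ equals $B_{\underline{x}, J'}$. Hence the natural map $B_{\underline{x}, \le w}/B_{\underline{x}, <w} \to B_{\underline{x}, I}/B_{\underline{x}, I\setminus\{w\}}$ is injective. Moreover, by the discussion preceding Theorem~\ref{thm:filtration theorem}, both quotients embed compatibly into $B_{\underline{x}}^w$ via the common projection $\pi^w_{\underline{x}}$. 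Applying that theorem twice, once to $J$ and once to $I$, shows that the two images inside $B_{\underline{x}}^w$ share the same basis $\{\pi^w_{\underline{x}}(\LL^*_{\underline{x},\boldsymbol{e}}(u_{\underline{w}})) \mid \underline{x}^{\boldsymbol{e}} = w\}$, so the inclusion is in fact an equality.

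For part~(2), Corollary~\ref{cor:top of B_bar w} identifies $B_{\underline{w}}^w$ with $R_w(\ell(w))$, a free left $R$-module of rank one generated by the image $\overline{u}_{\underline{w}}$ of $u_{\underline{w}}$, so any $R$-linear map $\phi \colon B_{\underline{w}}^w \to B_{\underline{x}, \le w}/B_{\underline{x}, <w}$ is determined by $\phi(\overline{u}_{\underline{w}})$. By Theorem~\ref{thm:filtration theorem}, this value admits a unique expression $\sum_{\boldsymbol{e}} r_{\boldsymbol{e}}\, \pi^w_{\underline{x}}(\LL^*_{\underline{x},\boldsymbol{e}}(u_{\underline{w}}))$ with $r_{\boldsymbol{e}} \in R$ and the sum taken over $\boldsymbol{e}$ with $\underline{x}^{\boldsymbol{e}} = w$. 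Now each $\LL^*_{\underline{x},\boldsymbol{e}} \colon B_{\underline{w}} \to B_{\underline{x}}$ is itself a $\mathcal{C}$-morphism whose image lies in $B_{\underline{x}, \le w}$ (because $\supp_W(B_{\underline{w}}) \subset \{y \le w\}$), so the combination $\sum_{\boldsymbol{e}} r_{\boldsymbol{e}} \cdot \LL^*_{\underline{x},\boldsymbol{e}}$, formed in the hom-bimodule $\Hom_{\mathcal{C}}(B_{\underline{w}}, B_{\underline{x}})$, is a $\mathcal{C}$-morphism that lifts $\phi$ after composition with the projection $B_{\underline{x}} \to B_{\underline{x},\le w}/B_{\underline{x},<w}$.

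The only genuinely delicate points are administrative: verifying that the functors $M \mapsto M_I, M_{\le w}, M_{<w}$ split over direct summands in $\mathcal{C}$, and that forming an $R$-linear combination in the hom-bimodule $\Hom_\mathcal{C}(B_{\underline{w}}, B_{\underline{x}})$ produces, after postcomposition with the projection onto the filtration quotient, precisely the expected $R$-linear combination in $\Hom_R(B_{\underline{w}}^w, B_{\underline{x}, \le w}/B_{\underline{x}, <w})$. Both follow directly from the definitions, after which the whole argument is powered by the explicit basis furnished by Theorem~\ref{thm:filtration theorem}.
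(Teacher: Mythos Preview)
Your proposal is correct and follows essentially the same route as the paper: reduce to $B=B_{\underline{x}}$, then use the explicit basis $\{\pi_{\underline{x}}^w(\LL_{\underline{x},\boldsymbol{e}}^*(u_{\underline{w}}))\}$ supplied by Theorem~\ref{thm:filtration theorem}. For~(1) the paper argues slightly more economically---it only invokes the theorem once (for $I$) and observes that the representatives $\sum c_{\boldsymbol{e}}\LL_{\underline{x},\boldsymbol{e}}^*(u_{\underline{w}})$ already lie in $B_{\underline{x},\le w}$ since $\supp_W(u_{\underline{w}})\subset\{y\le w\}$, yielding surjectivity directly---whereas you invoke it for both $J$ and $I$; but this is a cosmetic difference. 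Your treatment of~(2) matches the paper's almost verbatim.
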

\begin{proof}
We may assume $B = B_{\underline{x}}$ for some $\underline{x}\in S^l$.
By Theorem~\ref{thm:filtration theorem}, any element in $B_{\underline{x},I}/B_{\underline{x},I\setminus\{w\}}$ has a representative of a form $\sum c_{\boldsymbol{e}}\LL_{\underline{x},\boldsymbol{e}}^*(u_{\underline{w}})$ with $c_{\boldsymbol{e}}\in R$.
Since $\supp_W(u_{\underline{w}})\subset \{y\in W\mid y\le w\}$, $\supp_W(\LL_{\underline{x},\boldsymbol{e}}^*(u_{\underline{w}}))\subset \{y\in W\mid y\le w\}$.
Hence 
$\sum c_{\boldsymbol{e}}\LL_{\underline{x},\boldsymbol{e}}^*(u_{\underline{w}})\in B_{\underline{x},\le w}$.
We get (1).

We prove (2).
The $R$-module $B_{\underline{w}}^w$ is free and $\pi_{\underline{w}}^w(u_{\underline{w}})$ is a basis of this module.
Hence $\Hom_R(B_{\underline{w}}^w,B_{\underline{x},\le w}/B_{\underline{x},<w})\simeq B_{\underline{x},\le w}/B_{\underline{x},<w}$ by $\psi\mapsto\psi(\pi_{\underline{w}}^w(u_{\underline{w}}))$
By the functionality of $B\to B^w$ for $B\in \mathcal{C}$, we have $\psi(\pi_{\underline{w}}^w(u_{\underline{w}})) = \pi_{\underline{x}}^w(\psi(u_{\underline{w}}))$.
Therefore it is sufficient to prove that $\Hom_{\mathcal{C}}(B_{\underline{w}},B_{\underline{x}})\to B_{\underline{x},\le w}/B_{\underline{x},<w}$ defined by $\varphi\mapsto \pi_{\underline{x}}^w(\varphi(u_{\underline{w}}))$ is surjective.
This is clear from Theorem~\ref{thm:filtration theorem}.
\end{proof}

The following proposition is a generalization of \cite[Satz 6.6]{MR2329762}.
\begin{prop}\label{prop:B_w and successive quot}
Let $B\in\Sbimod$ and $w\in W$.
Consider the element $\prod_{tw < w}\alpha_t\in R$ where $t$ runs through reflections in $W$.
Then we have $B_w\xrightarrow{\sim}(\prod_{tw < w}\alpha_t)(B_{\le w}/B_{<w})$.
\end{prop}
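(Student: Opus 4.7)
The plan is to reduce to the Bott--Samelson case $B = B_{\underline{x}}$, construct an explicit element $v_{\underline{w}}\in (B_{\underline{w}})_w$ for a reduced expression $\underline{w}$ of $w$, and transport it across every $B_{\underline{x}}$ using the light-leaf morphisms $\LL^*_{\underline{x},\boldsymbol{e}}$. Since $\Sbimod$ is closed under direct summands and the operations $(\cdot)_w$, $(\cdot)_{\le w}$, $(\cdot)_{<w}$ commute with finite direct sums, it suffices to treat $B = B_{\underline{x}}$. The natural composite
\[
\Phi \colon B_w \hookrightarrow B_{\le w} \twoheadrightarrow B_{\le w}/B_{<w}
\]
is automatically injective, because any element of $B_w \cap B_{<w}$ has empty support in $W$ and therefore vanishes.

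The heart of the proof is the following inductive claim: for any reduced expression $\underline{w}$ of $w$, there exists $v_{\underline{w}}\in (B_{\underline{w}})_w$ of degree $\ell(w)$ whose $w$-component satisfies $(v_{\underline{w}})_w = c\cdot \prod_{tw<w}\alpha_t\cdot (u_{\underline{w}})_w$ in $(B_{\underline{w}})_Q^w$ for some $c\in \Coef^\times$. The base case $w = e$ is trivial with $v_\emptyset := 1$. For the inductive step, write $\underline{w} = (\underline{w}',s)$ with $w = w's$ and set $v_{\underline{w}} := v_{\underline{w}'}\otimes z_s$, where $z_s := \delta_s\otimes 1 - 1\otimes \delta_s\in B_s$. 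Lemma~\ref{lem:support of the tensor product} then gives $\supp_W(v_{\underline{w}})\subset\{w\}$, so $v_{\underline{w}} \in (B_{\underline{w}})_w$, and the $w$-component identity follows by combining: (i) the inductive hypothesis applied to $v_{\underline{w}'}$; (ii) the direct computation $(z_s)_s = \alpha_s\cdot (u_s)_s$ in $(B_s)_Q^s$ using \eqref{eq:decomposition of B_s in Q}; (iii) the $w'$-twist of the right $R$-action on $(B_{\underline{w}'})_Q^{w'}$, which produces an extra factor $w'(\alpha_s)$ when commuting $\alpha_s$ across the tensor $\otimes_Q$; and (iv) the root-system identity
\[
\prod_{tw<w}\alpha_t = w'(\alpha_s)\cdot \prod_{tw'<w'}\alpha_t,
\]
reflecting that the left inversions of $w = w's$ are precisely the left inversions of $w'$ together with $w's(w')^{-1}$, whose associated root is $w'(\alpha_s)$.

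To close the argument, for each $\boldsymbol{e}$ with $\underline{x}^{\boldsymbol{e}} = w$ the morphism $\LL^*_{\underline{x},\boldsymbol{e}}\colon B_{\underline{w}} \to B_{\underline{x}}$ lies in $\mathcal{C}$ and therefore sends $(B_{\underline{w}})_w$ into $(B_{\underline{x}})_w$; hence $\LL^*_{\underline{x},\boldsymbol{e}}(v_{\underline{w}})\in B_w$ and its image under $\Phi$ equals $c\cdot\prod_{tw<w}\alpha_t\cdot \pi_{\underline{x}}^w(\LL^*_{\underline{x},\boldsymbol{e}}(u_{\underline{w}}))$ in $B_{\le w}/B_{<w}$. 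Since $\{\pi_{\underline{x}}^w(\LL^*_{\underline{x},\boldsymbol{e}}(u_{\underline{w}}))\}_{\underline{x}^{\boldsymbol{e}} = w}$ is an $R$-basis of $B_{\le w}/B_{<w}$ by Theorem~\ref{thm:filtration theorem} and $c\in \Coef^\times$, this yields $\Ima(\Phi)\supset \prod_{tw<w}\alpha_t\cdot(B_{\le w}/B_{<w})$. Both $B_w$ and $\prod_{tw<w}\alpha_t\cdot(B_{\le w}/B_{<w})$ are graded free left $R$-modules of graded rank $v^{-\ell(w)}p_{\underline{x}}^w(v^{-1})$ (the former by Corollary~\ref{cor:graded rank of B_w}, the latter from Theorem~\ref{thm:filtration theorem} together with $\deg\prod\alpha_t = 2\ell(w)$), so the graded Nakayama argument used at the end of the proof of Theorem~\ref{thm:filtration theorem}---reduction modulo each maximal ideal of $\Coef$---upgrades this inclusion to equality. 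The main obstacle is the key inductive claim: specifically, tracking the $w'$-twist across the tensor product when commuting the $\alpha_s$ factor to the left, and matching this with the root-system identity that introduces the new inversion.
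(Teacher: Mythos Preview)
Your proof is correct and follows the same overall architecture as the paper's: reduce to $B = B_{\underline{x}}$, establish the result first at the level of $B_{\underline{w}}$ for a reduced expression of $w$, transport to general $B_{\underline{x}}$ via the morphisms $\LL^*_{\underline{x},\boldsymbol{e}}$ and the basis of Theorem~\ref{thm:filtration theorem}, and close with the graded rank comparison using the argument from the end of Theorem~\ref{thm:filtration theorem}.

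The one genuine difference is in the inductive step for $B_{\underline{w}}$. The paper peels $s$ off the \emph{left} of $\underline{w}$, takes an arbitrary element $1\otimes m + \delta_s\otimes m'$ of $(B_{\underline{w}})_w$, and shows directly that its $w$-component lies in $(\prod_{tw<w}\alpha_t)B_{\underline{w}}^w$; this gives the inclusion $B_{\underline{w},w}\subset (\prod_{tw<w}\alpha_t)B_{\underline{w}}^w$, which must be upgraded to an equality before one can lift $f\cdot\pi_{\underline{w}}^w(u_{\underline{w}})$ back into $(B_{\underline{w}})_w$ and apply $\LL^*$. You instead peel $s$ off the \emph{right} and construct a single explicit element $v_{\underline{w}} = v_{\underline{w}'}\otimes(\delta_s\otimes 1 - 1\otimes\delta_s)$ whose $w$-component is already $c\cdot(\prod_{tw<w}\alpha_t)(u_{\underline{w}})_w$. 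This is a bit more economical: it furnishes the required lift directly, so you can pass to general $B_{\underline{x}}$ without first establishing the $B_{\underline{w}}$ case as an equality. The root-system identity you use, $\prod_{tw<w}\alpha_t = w'(\alpha_s)\prod_{tw'<w'}\alpha_t$ (up to $\Coef^\times$), is the right-hand analogue of the paper's $\prod_{tw<w}\alpha_t = \alpha_s\, s(\prod_{t(sw)<sw}\alpha_t)$; both encode the standard description of left inversions along a reduced expression.
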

\begin{proof}
We may assume $B = B_{\underline{x}}$.
It is well-known that $\#\{t\mid tw < w\} = \ell(w)$.
Therefore $\deg(\prod_{tw < w}\alpha_t) = 2\ell(w)$.
Hence by Corollary~\ref{cor:graded rank of B_w} and Theorem~\ref{thm:filtration theorem}, both sides are graded free with the same graded rank.

First we assume that $\underline{x}$ is a reduced expression of $w$.
We denote $\underline{x}$ by $\underline{w}$.
In this case, $B_{\underline{w},\le w}/B_{\underline{w},<w}\simeq B_{\underline{w}}^w$.
With the argument in the last part of the proof of Theorem~\ref{thm:filtration theorem}, it is sufficient to prove $B_{\underline{w},w}\subset (\prod_{tw < w}\alpha_t)B_{\underline{w}}^w$.
We prove this by induction on $\ell(w)$.

Let $\underline{w} = (s_1,\dots,s_l)$ and set $s = s_1$, $\underline{sw} = (s_2,\dots,s_l)$.
Then $\underline{sw}$ is a reduced expression of $sw$.
Let $\delta_s\in V$ such that $\langle\alpha_s^\vee,\delta_s\rangle = 1$.
Take $1\otimes m + \delta_s\otimes m'\in R\otimes_{R^s}B_{\underline{sw}} = B_{\underline{w}}$ and assume that $1\otimes m + \delta_s\otimes m'\in B_{\underline{w},w}$.
By Lemma~\ref{lem:support of B_s otimes}, $\supp_W(m')\in \{w,sw\}$.
We also have $m'_w = 0$ since $B_{\underline{sw}}^w = 0$.
Therefore $m'\in B_{\underline{sw},sw}$.
By inductive hypothesis, we have $m'\in (\prod_{tsw < sw}\alpha_t)B_{\underline{sw}}^{sw}$.

Since $1\otimes m + \delta_s\otimes m'\in B_{\underline{w},w}$, $(1\otimes m + \delta_s\otimes m')_{sw} = 0$.
Hence we have $m_{sw} + \delta_s m'_{sw} = 0$.
Therefore $m_{sw} + s(\delta_s) m'_{sw} = (-\delta_s + s(\delta_s))m'_{sw} = -\alpha_s m'_{sw}\in \alpha_s (\prod_{tsw < sw}\alpha_t)B_{\underline{sw}}^{sw}$.
It is well-known that $\{t\mid tw < w\} = \{s\}\cup \{sts^{-1}\mid tsw < sw\}$.
Hence $\prod_{tw < w}\alpha_t = \alpha_s s(\prod_{tsw < sw}\alpha_t) = s(-\alpha_s\prod_{tsw < sw}\alpha_t)$.
Therefore $m_{sw} + s(\delta_s) m'_{sw}\in s(\prod_{tw < w}\alpha_t)B_{\underline{sw}}^{sw}$.
Take $n\in B_{\underline{sw}}$ such that $m_{sw} + s(\delta_s) m'_{sw} = s(\prod_{tw < w}\alpha_t)n_{sw}$.
Now we have $(1\otimes m + \delta_s\otimes m')_w = (m_w + \delta_s m'_w,m_{sw} + s(\delta_s)m'_{sw})$ and since $m_w,m'_w\in B_{\underline{sw}}^w = 0$, we get $(1\otimes m + \delta_s\otimes m')_w = (0,s(\prod_{tw < w}\alpha_t)n_{sw}) = (\prod_{tw < w}\alpha_t)(0,n_{sw})$.
(Recall that the left action of $R$ on the second factor of $(B_s\otimes B_{\underline{sw}})_Q^w = (B_{\underline{sw}})_Q^w\oplus(B_{\underline{sw}})_Q^{sw}$ is twisted by $s$.)
Again using $n_{w}\in B_{\underline{sw}}^w = 0$, we have $(\prod_{tw < w}\alpha_t)(0,n_{sw}) = (\prod_{tw < w}\alpha_t)(n_w,n_{sw}) = (\prod_{tw < w}\alpha_t)(1\otimes n)_w\in (\prod_{tw < w}\alpha_t)B_ {\underline{w}}^w$.

We prove the proposition for $B = B_{\underline{x}}$ with $\underline{x}\in S^l$.
It is sufficient to prove that $(\prod_{tw < w}\alpha_t)B_{\underline{x},\le w}/B_{\underline{x},<w}\subset B_{\underline{x},w}$.
Set $f = \prod_{tw < w}\alpha_t$ and let $fm$ be an element of the left hand side.
Then we may assume $m = \LL^*_{\underline{x},\boldsymbol{e}}(\pi_{\underline{w}}^w(u_{\underline{w}}))$ by Theorem~\ref{thm:filtration theorem}.
Since we have proved the proposition for $B = B_{\underline{w}}$, $f\pi_{\underline{w}}^w(u_{\underline{w}})\in B_{\underline{w},w}$.
Hence $fm = \LL^*_{\underline{x},\boldsymbol{e}}(f\pi_{\underline{w}}^w(u_{\underline{w}}))\in B_{\underline{x},w}$.
\end{proof}

\section{The categorification theorem}\label{sec:The categorification theorem}
In this section, we assume that $\Coef$ is a complete local ring.
Therefore a direct summand of a graded free $R$-module is again graded free.
\subsection{The classification of indecomposable objects}
\begin{thm}\label{thm:classification}
\begin{enumerate}
\item For each $w\in W$, there exists an indecomposable object $B(w)\in\Sbimod$ such that $\supp_W(B(w))\subset \{x\in W\mid x\le w\}$ and $B(w)^w\simeq R_w(\ell(w))$.
Moreover $B(w)$ is unique up to isomorphism.
\item For any indecomposable object $B\in \Sbimod$ there exists unique $(w,n)\in W\times \Z$ such that $B\simeq B(w)(n)$.
\item We have $D(B(w))\simeq B(w)$.
\item For a reduced expression $\underline{w}$ of $w\in W$, we have $B_{\underline{w}} = B(w)\oplus \bigoplus_{y < w,n\in\Z}B(y)(n)^{m_{n,y}}$ for some $m_{n,y}\in\Z_{\ge 0}$.
\end{enumerate}
\end{thm}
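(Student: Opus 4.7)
The plan is to use Krull--Schmidt in $\Sbimod$, available since $\Coef$ is complete local, together with the projectivity and adjunction statements of Corollaries~\ref{cor:projectibity of Soergel bimodule} and~\ref{cor:on filtration}, to identify $B(w)$ as the unique indecomposable summand of $B_{\underline{w}}$ carrying the top piece $R_w(\ell(w))$. Fix a reduced expression $\underline{w}$ of $w$ and decompose $B_{\underline{w}}=\bigoplus_{j}B_{j}$ into indecomposables. Applying $(-)^{w}$ and using Corollary~\ref{cor:top of B_bar w} gives $\bigoplus_{j}B_{j}^{w}=B_{\underline{w}}^{w}\simeq R_{w}(\ell(w))$; since $R_{w}(\ell(w))$ is indecomposable in $\mathcal{C}$, exactly one $B_{j}$ has nonzero $w$-piece, and I call it $B(w)$. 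Clearly $\supp_{W}(B(w))\subset\supp_{W}(B_{\underline{w}})=\{x\le w\}$ and $B(w)^{w}\simeq R_{w}(\ell(w))$ by construction, giving existence in~(1).

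For the uniqueness assertion in~(1), let $B\in\Sbimod$ be any indecomposable with $\supp_{W}(B)\subset\{x\le w\}$ and $B^{w}\simeq R_{w}(\ell(w))$. The support condition yields $B_{\le w}=B$ and $B_{W\setminus\{w\}}=B_{<w}$, so $B^{w}=B/B_{<w}=B_{\le w}/B_{<w}$. Corollary~\ref{cor:on filtration}(2) produces a degree-zero $\varphi\colon B_{\underline{w}}\to B$ inducing the identity on $w$-pieces, and Corollary~\ref{cor:projectibity of Soergel bimodule} produces $\psi\colon B\to B_{\underline{w}}$ doing the same in the opposite direction; the composite $\varphi\psi\in\End^{0}_{\mathcal{C}}(B)$ restricts to the identity on $B^{w}\ne 0$, hence is non-nilpotent, hence a unit by locality of $\End^{0}_{\mathcal{C}}(B)$, so $B$ splits off $B_{\underline{w}}$ and Krull--Schmidt forces $B\simeq B(w)$. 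Part~(3) now follows at once: $D$ is an involutive functor (using that Soergel bimodules are graded free as right $R$-modules) preserving supports (from $D(M)_{Q}^{w}=\Hom_{\text{-$Q$}}(M_{Q}^{w},Q)$), so it sends the summand $B(w)$ of $B_{\underline{w}}\simeq D(B_{\underline{w}})$ to an indecomposable summand whose $w$-piece is a nonzero direct summand of $B_{\underline{w}}^{w}=R_{w}(\ell(w))$ and therefore equal to it; uniqueness gives $D(B(w))\simeq B(w)$.

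For~(2) I would argue by induction, for instance on the pair $(\max\{\ell(y):y\in\supp_{W}(B)\},\#\supp_{W}(B))$: let $B\in\Sbimod$ be indecomposable with $w$ maximal in $\supp_{W}(B)$, and realise $B$ as a summand of some $B_{\underline{x}}(k)$. By Theorem~\ref{thm:filtration theorem}, $B_{\underline{x},\le w}/B_{\underline{x},<w}$ is graded free, hence $B_{\le w}/B_{<w}$ is a graded free summand $\bigoplus_{i}R_{w}(a_{i})$; after a grading shift I may assume $a_{i_{0}}=\ell(w)$ for some $i_{0}$. Corollary~\ref{cor:on filtration}(2) lifts the inclusion of this summand to $\varphi\colon B_{\underline{w}}\to B$, and Corollary~\ref{cor:projectibity of Soergel bimodule} is then used to produce a $\psi\colon B\to B_{\underline{w}}$ whose composite with $\varphi$ is the identity on $B_{\underline{w}}^{w}$; the ``non-nilpotent endomorphism is a unit'' argument then shows $B$ splits off $B_{\underline{w}}$, and the uniqueness in~(1) gives $B\simeq B(w)$. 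Part~(4) is then immediate: write $B_{\underline{w}}=B(w)\oplus N$, decompose $N$ into indecomposables $B(y_{i})(n_{i})$ using~(2), and observe $y_{i}\le w$ (as $\supp_{W}(B(y_{i}))\subset\supp_{W}(B_{\underline{w}})$) and $y_{i}\ne w$ (as $N^{w}=0$), so $y_{i}<w$. The main obstacle I anticipate is in the step just sketched: Corollary~\ref{cor:projectibity of Soergel bimodule} controls $\psi$ through $B^{w}$, while Corollary~\ref{cor:on filtration}(2) controls $\varphi$ through $B_{\le w}/B_{<w}$, and these two graded free $R$-modules are merely linked by an inclusion $B_{\le w}/B_{<w}\hookrightarrow B^{w}$ which, as Proposition~\ref{prop:B_w and successive quot} and a direct graded-rank calculation show, is in general strict; producing a $\psi$ that actually inverts $\varphi$ on the top piece of $B_{\underline{w}}$, rather than on an unrelated summand of $B^{w}$, will require extra bookkeeping, presumably by working with $B_{\le w}/B_{<w}$ throughout and invoking duality to translate between the two pictures.
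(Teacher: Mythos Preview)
Your arguments for the existence half of (1), for (3), and for (4) match the paper's. Your direct proof of uniqueness in (1) is a harmless variant: the paper instead deduces it a posteriori from (2), but your route works precisely because the hypothesis $\supp_W(B)\subset\{x\le w\}$ forces $B_{\le w}/B_{<w}=B^w$, so the two lifting corollaries become compatible.

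The obstacle you flag in (2) is genuine, and your suggested fix via duality is not how the paper proceeds. The paper's resolution is simpler: rather than taking $w$ Bruhat-maximal in $\supp_W(B)$, take $w$ with $\ell(w)$ \emph{maximal} among $\{y\in W:B^y\ne 0\}$. Then the closed set $I=\{y\in W:\ell(y)\le\ell(w)\}$ contains $\supp_W(B)$, so $B_I=B$ and hence $B_I/B_{I\setminus\{w\}}=B/B_{W\setminus\{w\}}=B^w$; on the other hand Corollary~\ref{cor:on filtration}(1), applied with this $I$, gives $B_{\le w}/B_{<w}\xrightarrow{\sim}B_I/B_{I\setminus\{w\}}$. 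Therefore $B_{\le w}/B_{<w}\simeq B^w$, and the targets of Corollary~\ref{cor:on filtration}(2) and Corollary~\ref{cor:projectibity of Soergel bimodule} coincide. One then picks a rank-one direct summand $B_{\underline{w}}(n)^w\simeq R_w(\ell(w)+n)$ of $B^w$, lifts the inclusion and the projection by the two corollaries, and runs exactly the ``non-nilpotent endomorphism of an indecomposable is a unit'' argument you already wrote down for (1). No extra bookkeeping or duality is needed; the single missing idea is to choose $w$ by length rather than Bruhat order, so that Corollary~\ref{cor:on filtration}(1) collapses the gap between $B_{\le w}/B_{<w}$ and $B^w$.
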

\begin{proof}
Fix a reduced expression $\underline{w}$ of $w$.
Then we have $B_{\underline{w}}^w\simeq R_w(\ell(w))$.
Therefore there exists a unique indecomposable direct summand $B(w)$ of $B_{\underline{w}}$ such that $B(w)^w = B_{\underline{w}}^w(\ell(w))$.
This satisfies the condition of (1) and since $D(B_{\underline{w}})\simeq B_{\underline{w}}$, we have $D(B(w))\simeq B(w)$.

It only remains to show that any object in $\Sbimod$ is a direct sum of $B(w)(n)$.
Let $B\in \Sbimod$.
Take $w\in W$ such that $B^w\ne 0$ and $\ell(w)$ is maximal with respect to this condition.
Set $I = \{y\in W\mid \ell(y)\le \ell(w)\}$.
This is a closed subset of $W$.
The condition of $w$ and the definition of $I$ says that $B_{I} = B$.
Hence $B_I/B_{I\setminus\{w\}} = B^w$.

Since $B^w$ is graded free and $B_{\underline{w}}^w\simeq R_w(\ell(w))$, there exists $n\in \Z$ such that $B_{\underline{w}}(n)^w$ is a direct summand of $B^w$.
Let $i\colon B_{\underline{w}}(n)^w\hookrightarrow B^w$ and $p\colon B^w\twoheadrightarrow B_{\underline{w}}(n)^w$ be the embedding from and the projection to the direct summand.
Then by Corollary~\ref{cor:projectibity of Soergel bimodule} and \ref{cor:on filtration} (2), there exists $B_{\underline{w}}(n)\to B$  and $B\to B_{\underline{w}}(n)$ which is a lift of $i$ and $p$, respectively.
Since $B(w)(n)$ is a direct summand such that $B(w)(n)^w = B_{\underline{w}}(n)^w$, composing the embedding from or the projection to $B(w)(n)$, we get a lift $\widetilde{i}\colon B(w)(n)\to B$ and $\widetilde{p}\colon B\to B(w)(n)$.
The composition $\widetilde{p}\circ\widetilde{i}$ is identity on $B(w)(n)^w$, hence $\widetilde{p}\circ\widetilde{i}\in \End(B(w)(n))$ is not nilpotent.
Since $B(w)(n)$ is indecomposable, $\End(B(w)(n))$ is local.
Hence $\widetilde{p}\circ\widetilde{i}$ is an isomorphism.
Therefore $B(w)(n)$ is a direct summand of $B$.
The last statement follows from the argument with supports.
\end{proof}

It is easy to see that $B(s) = B_s$ for any $s\in S$.

\subsection{The categorification}
Let $[\Sbimod]$ be the split Grothendieck group of the category $\Sbimod$.
Then this has a structure of $\Z[v^{\pm 1}]$-algebra via $v[B] = [B(1)]$ and $[B_1][B_2] = [B_1\otimes B_2]$ where $[B]$  is the image of $B\in \Sbimod$ in $[\Sbimod]$.
By Theorem~\ref{thm:classification}, $\{[B(w)]\mid w\in W\}$ is a $\Z[v^{\pm 1}]$-basis of $[\Sbimod]$.
By Theorem~\ref{thm:classification} (4), $\{[B_{\underline{w}}]\mid w\in W\}$ is also a $\Z[v^{\pm 1}]$-basis of $[\Sbimod]$, here we fix a reduced expression $\underline{w}$ for each $w\in W$.
In particular, $\{[B_{\underline{x}}]\mid \underline{x}\in S^l,l\in\Z_{\ge 0}\}$ generates $[\Sbimod]$

By Theorem~\ref{thm:graded rank of B_x^w}, the $R$-module $B_{\underline{x}}^w$ is graded free.
Therefore is the $R$-module $B^w$ for any $B\in \Sbimod$ is graded free.
We define the character map $\ch\colon [\Sbimod]\to \mathcal{H}$ by
\[
\ch(B) = \sum_{w\in W}v^{-\ell(w)}\grk(B^w)H_w.
\]

\begin{prop}\label{prop:image of character map of BS module}
$\ch(B_{\underline{x}}) = \underline{H}_{\underline{x}}$.
\end{prop}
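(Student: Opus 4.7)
The plan is to prove this by directly computing the right-hand side of the character formula and recognizing the coefficients as the structure constants $p_{\underline{x}}^w$ defined earlier.

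First, I would unpack the definition of $\ch$: we have
\[
\ch(B_{\underline{x}}) = \sum_{w\in W} v^{-\ell(w)} \grk(B_{\underline{x}}^w) H_w.
\]
The key input is Theorem~\ref{thm:graded rank of B_x^w} (2), which tells us that $B_{\underline{x}}^w$ is graded free as a left $R$-module with graded rank exactly $v^{\ell(w)} p_{\underline{x}}^w(v)$. Substituting this in, the factors of $v^{\pm \ell(w)}$ cancel, and we obtain
\[
\ch(B_{\underline{x}}) = \sum_{w\in W} p_{\underline{x}}^w(v) H_w.
\]
By the very definition of the polynomials $p_{\underline{x}}^w$, this sum is exactly $\underline{H}_{\underline{x}}$.

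So the proof is essentially a one-line consequence of Theorem~\ref{thm:graded rank of B_x^w}(2); there is no genuine obstacle left. All the real work has been done in the light leaves section, where the graded rank of $B_{\underline{x}}^w$ was pinned down via the basis $\{b_{\underline{x},\boldsymbol{e}}^w \mid \underline{x}^{\boldsymbol{e}} = w\}$ and Lemma~\ref{lem:defect formula} comparing the defect statistic with $p_{\underline{x}}^w$. The only thing worth double-checking in writing is the bookkeeping of the grading shift: one needs $\grk(B_{\underline{x}}^w)$ evaluated at $v$ (not $v^{-1}$), and the convention that $\grk$ counts basis elements with $v^{-\deg}$, so that the defect $d(\boldsymbol{e})$ and length $\ell(w)$ appearing in Theorem~\ref{thm:graded rank of B_x^w} combine consistently with the factor $v^{-\ell(w)}$ in the definition of $\ch$. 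Once these signs line up, the identity is immediate.
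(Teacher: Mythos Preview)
Your proposal is correct and is precisely the paper's own argument: the paper simply writes ``Clear from Theorem~\ref{thm:graded rank of B_x^w},'' and you have spelled out the one-line substitution of $\grk(B_{\underline{x}}^w) = v^{\ell(w)} p_{\underline{x}}^w(v)$ into the definition of $\ch$.
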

\begin{proof}
Clear from Theorem~\ref{thm:graded rank of B_x^w}.
\end{proof}

In particular, $\ch$ is an algebra homomorphism on $\sum_{\underline{x}\in S^l,l\in\Z_{\ge 0}}\Z[v^{\pm 1}][B_{\underline{x}}]$.
Since $\{[B_{\underline{x}}]\mid \underline{x}\in S^l,l\in\Z_{\ge 0}\}$ generates $[\Sbimod]$, $\ch$ is an algebra homomorphism from $[\Sbimod]$ to $\mathcal{H}$.

For a reduced expression $\underline{w}$ of each $w\in W$, we have $\underline{H}_{\underline{w}}\in H_w + \sum_{y < w}\Z[v^{\pm 1}]H_y$.
Hence $\{\underline{H}_{\underline{w}}\mid w\in W\}$ is a basis of $\mathcal{H}$.
Since $\ch$ sends a basis $\{[B_{\underline{w}}]\mid w\in W\}$ to a basis $\{\underline{H}_{\underline{w}}\mid w\in W\}$, we get the following categorification theorem.
\begin{thm}
The algebra homomorphism $\ch$ is an isomorphism $[\Sbimod]\simeq \mathcal{H}$.
\end{thm}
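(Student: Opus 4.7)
The plan is to assemble the proof directly from ingredients already established; very little new work is required. First I would check that $\ch$ is a well-defined $\Z[v^{\pm 1}]$-module homomorphism. Additivity on direct sums is inherited from $\grk$, since $(B_1\oplus B_2)^w = B_1^w\oplus B_2^w$ and graded rank is additive; compatibility with the shift action, $\ch([B(1)]) = v\,\ch([B])$, follows from $\grk(M(1))=v\grk(M)$ combined with the factor $v^{-\ell(w)}$ being independent of the shift. Well-definedness of $\ch$ on Soergel bimodules themselves (not merely on $B_{\underline{x}}$'s) requires knowing each $B^w$ is graded free, which is guaranteed because $\Coef$ is a complete local ring so direct summands of graded free $R$-modules are graded free, and because $B_{\underline{x}}^w$ is graded free by Theorem~\ref{thm:graded rank of B_x^w}.

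Next I would verify multiplicativity. Since $B_{\underline{x}}\otimes B_{\underline{y}} = B_{\underline{x}\cdot\underline{y}}$ (concatenation) and similarly $\underline{H}_{\underline{x}}\underline{H}_{\underline{y}} = \underline{H}_{\underline{x}\cdot\underline{y}}$, Proposition~\ref{prop:image of character map of BS module} gives
\[
\ch([B_{\underline{x}}][B_{\underline{y}}]) = \ch([B_{\underline{x}\cdot\underline{y}}]) = \underline{H}_{\underline{x}\cdot\underline{y}} = \underline{H}_{\underline{x}}\underline{H}_{\underline{y}} = \ch([B_{\underline{x}}])\ch([B_{\underline{y}}]).
\]
Because the set $\{[B_{\underline{x}}]:\underline{x}\in S^l,\,l\ge 0\}$ generates $[\Sbimod]$ as a $\Z[v^{\pm 1}]$-module (a consequence of Theorem~\ref{thm:classification}(4), which writes each $[B(w)]$ in terms of $[B_{\underline{w}}]$ and smaller $[B(y)]$'s), bilinear extension over $\Z[v^{\pm 1}]$ upgrades this to multiplicativity on all of $[\Sbimod]$.

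Finally I would deduce bijectivity by comparing bases. By Theorem~\ref{thm:classification}(2) and (4), fixing a reduced expression $\underline{w}$ of each $w\in W$ yields a $\Z[v^{\pm 1}]$-basis $\{[B_{\underline{w}}]\}_{w\in W}$ of $[\Sbimod]$, because the base-change matrix from $\{[B(w)]\}$ to $\{[B_{\underline{w}}]\}$ is unitriangular in the Bruhat order. On the Hecke side, the defining relation $\underline{H}_s = H_s+v$ together with $H_{w_1w_2}=H_{w_1}H_{w_2}$ when lengths add shows $\underline{H}_{\underline{w}}\in H_w+\sum_{y<w}\Z[v^{\pm 1}]H_y$, so $\{\underline{H}_{\underline{w}}\}_{w\in W}$ is likewise a $\Z[v^{\pm 1}]$-basis of $\mathcal{H}$ obtained by a unitriangular base change from the standard basis $\{H_w\}$. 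Since $\ch$ carries the first basis bijectively to the second by Proposition~\ref{prop:image of character map of BS module}, it is an isomorphism of $\Z[v^{\pm 1}]$-modules, hence by the previous step an isomorphism of algebras.

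The main obstacle has really already been overcome upstream: the substantive input is the classification in Theorem~\ref{thm:classification} and the graded-rank computation in Theorem~\ref{thm:graded rank of B_x^w} via light leaves, which together guarantee both the triangular basis $\{[B_{\underline{w}}]\}$ of $[\Sbimod]$ and the identity $\ch([B_{\underline{x}}])=\underline{H}_{\underline{x}}$. Once these are in hand, the categorification theorem is a bookkeeping statement matching two parallel unitriangular decompositions, and the only point requiring any care is ensuring $\ch$ is well-defined (needing graded freeness of $B^w$) before extending multiplicativity from generators.
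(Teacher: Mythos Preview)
Your proposal is correct and follows essentially the same approach as the paper: establish well-definedness of $\ch$ via graded freeness of $B^w$, deduce multiplicativity from Proposition~\ref{prop:image of character map of BS module} on the generating set $\{[B_{\underline{x}}]\}$, and conclude bijectivity by matching the basis $\{[B_{\underline{w}}]\}$ of $[\Sbimod]$ (obtained from Theorem~\ref{thm:classification}) with the basis $\{\underline{H}_{\underline{w}}\}$ of $\mathcal{H}$ (obtained by unitriangularity against $\{H_w\}$). The paper's argument is in fact slightly terser than yours, presenting most of these points in the text surrounding the theorem rather than in a formal proof.
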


\subsection{A formula on homomorphisms}
We define:
\begin{itemize}
\item an involution $h\mapsto \overline{h}$ on $\mathcal{H}$ by $\overline{\sum_{w\in W}a_w(v)H_w} = \sum_{w\in W}a_w(v^{-1})H_{w^{-1}}^{-1}$.
\item an anti-involution $\omega\colon \mathcal{H}\to \mathcal{H}$ by $\omega(\sum_{w\in W}a_w(v)H_w) = \sum_{w\in W}a_w(v^{-1})H_w^{-1}$.
\item a $\Z[v^{\pm 1}]$-linear map $\varepsilon\colon \mathcal{H}\to \Z[v^{\pm 1}]$ by $\varepsilon(\sum_{w\in W}a_wH_w) = a_e$.
We also put $\overline{\varepsilon}(h) = \overline{\varepsilon(\overline{h})}$.
\end{itemize}
The following lemma follows from a straightforward calculation.
\begin{lem}
The linear map $\varepsilon$ is a trace, namely it satisfies $\varepsilon(hh') = \varepsilon(h'h)$ for any $h,h'\in \mathcal{H}$.
The linear map $\overline{\varepsilon}$ is also a trace.
\end{lem}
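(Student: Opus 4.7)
The plan is to reduce the trace property to an identity on the basis $\{H_w\}$ and then exploit the defining quadratic relation $H_s^2 = (v^{-1}-v)H_s + 1$. By $\Z[v^{\pm 1}]$-bilinearity it suffices to check $\varepsilon(H_x H_y) = \varepsilon(H_y H_x)$ for all $x,y\in W$, and I would deduce this from the manifestly symmetric stronger identity
\[
\varepsilon(H_x H_y) = \delta_{xy,e}.
\]

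To prove the identity I induct on $\ell(y)$. The case $y = e$ is immediate. For the inductive step, write $y = sy'$ with $s\in S$ and $\ell(y') = \ell(y)-1$, so $H_y = H_s H_{y'}$. If $\ell(xs) = \ell(x)+1$, then $H_x H_s = H_{xs}$ and the induction hypothesis applied to $(xs,y')$ gives $\delta_{xsy',e} = \delta_{xy,e}$. If $\ell(xs) = \ell(x)-1$, use $H_x = H_{xs} H_s$ together with the quadratic relation to obtain $H_x H_s = (v^{-1}-v)H_x + H_{xs}$, whence
\[
\varepsilon(H_x H_y) = (v^{-1}-v)\delta_{xy',e} + \delta_{xsy',e}.
\]
The first term vanishes, because $xy' = e$ would force simultaneously $sx^{-1} > x^{-1}$ (from $sy' > y'$) and $sx^{-1} < x^{-1}$ (from $xs < x$); the second equals $\delta_{xy,e}$. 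Since $xy = e$ if and only if $yx = e$, the identity is symmetric in $x,y$, giving the trace property of $\varepsilon$.

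For $\overline{\varepsilon}$ the key observation is that the bar map is a ring automorphism of $\mathcal{H}$. Indeed, the assignment $v\mapsto v^{-1}$ together with $H_s \mapsto H_s^{-1}$ preserves the defining relations of $\mathcal{H}$ on generators, and the prescription $\overline{H_w} = H_{w^{-1}}^{-1}$ then follows because the reverse of a reduced expression for $w$ is a reduced expression for $w^{-1}$. Granting this,
\[
\overline{\varepsilon}(hh') = \overline{\varepsilon(\overline{h}\,\overline{h'})} = \overline{\varepsilon(\overline{h'}\,\overline{h})} = \overline{\varepsilon}(h'h),
\]
where the middle equality is the trace property of $\varepsilon$ already established. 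I expect no serious obstacle: the inductive step for $\varepsilon$ is the only real computation, and the $\overline{\varepsilon}$ part is formal once the ring-automorphism property of the bar involution is in hand.
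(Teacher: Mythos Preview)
Your proof is correct and is essentially the ``straightforward calculation'' the paper alludes to without details; indeed, the identity $\varepsilon(H_xH_y)=\delta_{xy,e}$ that you establish by induction is exactly the formula the paper later cites from \cite[(4.3)]{MR2441994} in the proof of Corollary~\ref{cor:hom between BS modules}. Your treatment of $\overline{\varepsilon}$ via the ring-automorphism property of the bar involution is the standard reduction, and nothing further is needed.
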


\begin{lem}\label{lem:a lemma on the polynomial p}
Let $s_1,\dots,s_l\in S$.
\begin{enumerate}
\item We have $\omega(\underline{H}_{(s_1,\dots,s_l)}) = \underline{H}_{(s_l,\dots,s_1)}$ and $\overline{\underline{H}_{(s_1,\dots,s_l)}} = \underline{H}_{(s_1,\dots,s_l)}$.
\item We have $p_{(s_l,\dots,s_1)}^{w^{-1}} = p_{(s_1,\dots,s_l)}^w$ for any $w\in W$.
\end{enumerate}
\end{lem}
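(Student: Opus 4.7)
The plan is to reduce each identity to its statement on the simple generators $\underline{H}_s$, using that $\omega$ is a $\Z[v^{\pm 1}]$-linear anti-involution of $\mathcal{H}$ and $\overline{\cdot}$ is a ring involution. The key algebraic input is the rewriting $H_s^{-1} = H_s + v - v^{-1}$, which is immediate from the quadratic relation $(H_s - v^{-1})(H_s + v) = 0$. (One should first briefly record that $\omega$ and $\overline{\cdot}$ are well defined: the braid relations are invariant under reversal of the factors, and the quadratic relation is respected thanks to the formula for $H_s^{-1}$.)

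For part (1), it suffices to check $\omega(\underline{H}_s) = \underline{H}_s$ and $\overline{\underline{H}_s} = \underline{H}_s$ for each $s \in S$. Both reduce at once to $H_s^{-1} + v^{-1} = H_s + v$, which is just a rearrangement of the displayed formula for $H_s^{-1}$. Extending by anti-multiplicativity of $\omega$ and multiplicativity of $\overline{\cdot}$ yields
\[
\omega(\underline{H}_{(s_1,\dots,s_l)}) = \omega(\underline{H}_{s_l})\cdots\omega(\underline{H}_{s_1}) = \underline{H}_{s_l}\cdots\underline{H}_{s_1} = \underline{H}_{(s_l,\dots,s_1)},
\]
and similarly $\overline{\underline{H}_{(s_1,\dots,s_l)}} = \prod_i \overline{\underline{H}_{s_i}} = \underline{H}_{(s_1,\dots,s_l)}$.

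For part (2), I would introduce the composite $\tau = \overline{\cdot}\circ\omega$, which, being the composite of an involution with an anti-involution, is itself a $\Z[v^{\pm 1}]$-linear anti-involution. A direct computation gives $\tau(H_w) = \overline{H_w^{-1}} = \overline{H_w}^{-1} = H_{w^{-1}}$ and $\tau(v) = \overline{v^{-1}} = v$. Applying $\tau$ to the defining equation $\underline{H}_{(s_1,\dots,s_l)} = \sum_w p^w_{(s_1,\dots,s_l)}(v) H_w$: on the left, part (1) gives $\tau(\underline{H}_{(s_1,\dots,s_l)}) = \underline{H}_{(s_l,\dots,s_1)} = \sum_w p^w_{(s_l,\dots,s_1)}(v) H_w$, while on the right one gets $\sum_w p^w_{(s_1,\dots,s_l)}(v) H_{w^{-1}} = \sum_w p^{w^{-1}}_{(s_1,\dots,s_l)}(v) H_w$ after re-indexing. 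Comparing coefficients of $H_w$ yields $p^w_{(s_l,\dots,s_1)} = p^{w^{-1}}_{(s_1,\dots,s_l)}$, which is the asserted identity upon substituting $w \mapsto w^{-1}$.

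No substantive obstacle arises here; the whole argument is Hecke-algebra bookkeeping, and the only point that requires any verification is the well-definedness of $\omega$ and $\overline{\cdot}$, which is a one-line check from the quadratic relation.
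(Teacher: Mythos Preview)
Your proof is correct and follows essentially the same approach as the paper, which also reduces part (1) to the single-generator identities $\omega(\underline{H}_s) = \overline{\underline{H}_s} = \underline{H}_s$ and obtains part (2) by applying $\overline{\cdot}\circ\omega$ to the defining expansion and comparing coefficients. One minor slip: $\omega$ is not $\Z[v^{\pm 1}]$-linear (by definition $\omega(v)=v^{-1}$), but you only use its anti-multiplicativity, so the argument is unaffected.
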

\begin{proof}
A direct calculation shows that $\overline{\underline{H}_s} = \underline{H}_s$ and $\omega(\underline{H}_s) = \underline{H}_s$.
The first part follows from this.
For the second, we have
\begin{align*}
\sum_{w\in W}p_{(s_l,\dots,s_1)}^{w^{-1}}(v)H_{w} & = \sum_{w\in W}p_{(s_l,\dots,s_1)}^{w}(v)H_{w^{-1}}\\
& = \overline{\omega\left(\sum_{w\in W}p_{(s_l,\dots,s_1)}^{w}(v)H_w\right)}\\
& = \overline{\omega(H_{(s_l,\dots,s_1)})}\\
& = H_{(s_1,\dots,s_l)}\\
& = \sum_{w\in W}p_{(s_1,\dots,s_l)}^wH_w.
\end{align*}
Comparing the coefficient of $H_w$, we get the lemma.
\end{proof}

\begin{thm}
Let $B_1,B_2\in\Sbimod$.
Then $\Hom_{\Sbimod}(B_1,B_2)$ is graded free as a left $R$-module and its graded rank is given by
\[
\grk(\Hom_{\Sbimod}(B_1,B_2)) = \overline{\varepsilon}(\omega(\ch(B_1))\ch(B_2))).
\]
\end{thm}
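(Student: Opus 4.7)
My plan is to reduce to Bott--Samelson bimodules via bilinearity, then iterate the self-adjunction of $B_s$ to turn $\Hom(B_{\underline{x}},B_{\underline{y}})$ into the stalk at $e$ of a larger Bott--Samelson bimodule, where Corollary~\ref{cor:graded rank of B_w} gives the answer outright.

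First I reduce to the case $B_1=B_{\underline{x}}$, $B_2=B_{\underline{y}}$. Both sides of the identity are additive in each argument: graded rank is additive when all spaces are graded free, and $\ch$ is manifestly additive. Since every $B\in\Sbimod$ is a direct summand of a Bott--Samelson bimodule and the standing hypothesis on $\Coef$ ensures that summands of graded free $R$-modules remain graded free, graded freeness of $\Hom(B_1,B_2)$ follows once it is established for Bott--Samelson bimodules. Both sides then descend to $\Z[v^{\pm 1}]$-bilinear forms on the split Grothendieck group $[\Sbimod]$, and $\{[B_{\underline{x}}]\}$ generates this group (Theorem~\ref{thm:classification}), so the identity on Bott--Samelson bimodules propagates to all Soergel bimodules.

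Next, iterating Lemma~\ref{lem:B_s is self-adjoint} yields a degree-preserving isomorphism $\Hom_{\mathcal{C}}(B_{\underline{x}},B_{\underline{y}})\simeq \Hom_{\mathcal{C}}(R_e,B_{\underline{x}^{\mathrm{op}}}\otimes B_{\underline{y}})$, where $\underline{x}^{\mathrm{op}}=(s_l,\dots,s_1)$ is the reversed sequence. Since $\supp_W(R_e)=\{e\}$, Lemma~\ref{lem:left adjointness of upper I} replaces the target by its stalk, and for any $M\in\mathcal{C}$ the stalk $M_e$ is a symmetric $R$-bimodule (since $e(f)=f$), so a bimodule map $R_e\to M_e$ is just multiplication by $\varphi(1)$; this gives $\Hom_{\mathcal{C}}(R_e,M_e)\simeq M_e$ as graded left $R$-modules. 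Writing $\underline{z}=(\underline{x}^{\mathrm{op}},\underline{y})$ for the concatenation, so that $B_{\underline{x}^{\mathrm{op}}}\otimes B_{\underline{y}}=B_{\underline{z}}$, Corollary~\ref{cor:graded rank of B_w} then gives
\[
\grk\Hom_{\mathcal{C}}(B_{\underline{x}},B_{\underline{y}}) = \grk(B_{\underline{z},e}) = p_{\underline{z}}^e(v^{-1}).
\]

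On the Hecke side, Proposition~\ref{prop:image of character map of BS module} gives $\ch(B_{\underline{x}})=\underline{H}_{\underline{x}}$, and Lemma~\ref{lem:a lemma on the polynomial p} supplies both $\omega(\underline{H}_{\underline{x}})=\underline{H}_{\underline{x}^{\mathrm{op}}}$ and the bar-invariance of $\underline{H}_{\underline{z}}$. Since $\ch$ is an algebra homomorphism, $\omega(\ch(B_{\underline{x}}))\ch(B_{\underline{y}}) = \underline{H}_{\underline{x}^{\mathrm{op}}}\underline{H}_{\underline{y}} = \underline{H}_{\underline{z}}$, and hence $\overline{\varepsilon}(\underline{H}_{\underline{z}}) = \overline{\varepsilon(\underline{H}_{\underline{z}})} = \overline{p_{\underline{z}}^e(v)} = p_{\underline{z}}^e(v^{-1})$, matching the rank above. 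The main technical point is to check that the self-adjunction isomorphism is degree-preserving and left-$R$-linear; this is straightforward from the explicit formula $\psi(m)=1\otimes\varphi(1\otimes\delta m)-s(\delta)\otimes\varphi(1\otimes m)$ of Lemma~\ref{lem:B_s is self-adjoint}, tracking $\deg\delta=2$ against the grading shift in $B_s=R\otimes_{R^s}R(1)$. Even if one grants only $\Coef$-linearity, equality of graded ranks still follows from equality of Hilbert series over $\Coef$, so the concern is mild.
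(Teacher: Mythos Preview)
Your proof is correct and follows essentially the same path as the paper's: reduce to Bott--Samelson bimodules, use the self-adjunction of $B_s$ (Lemma~\ref{lem:B_s is self-adjoint}) to identify $\Hom(B_{\underline{x}},B_{\underline{y}})$ with $(B_{\underline{z}})_e$, apply Corollary~\ref{cor:graded rank of B_w}, and match the Hecke side via $\omega(\underline{H}_{\underline{x}})=\underline{H}_{\underline{x}^{\mathrm{op}}}$ and bar-invariance of $\underline{H}_{\underline{z}}$. The only minor slip is calling the pairing $\Z[v^{\pm1}]$-bilinear---it is sesquilinear in $v$ (cf.~\eqref{eq:paring, on v})---but this does not affect the reduction, since the $[B_{\underline{x}}(n)]$ already generate $[\Sbimod]$ over $\Z$.
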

\begin{proof}
For $B_1 = B_{(s_1,\dots,s_l)}$ and $B_2 = B_{(t_1,\dots,t_r)}$, by Lemma~\ref{lem:B_s is self-adjoint}, we have 
\begin{equation}\label{eq:adjoint and hom-formula}
\Hom_{\Sbimod}(B_1,B_2) \simeq \Hom_{\Sbimod}(R_e,B_{(s_l,\dots,s_1)}B_{(t_1,\dots,t_r)}) \simeq (B_{(s_l,\dots,s_1,t_1,\dots,t_r)})_e
\end{equation}
and this is a graded free left $R$-module by Corollary~\ref{cor:graded rank of B_w}.
Therefore $\Hom_{\Sbimod}(B_1,B_2)$ is graded free for any $B_1,B_2\in \Sbimod$.

The map $(B_1,B_2)\mapsto \grk(\Hom_{\Sbimod}(B_1,B_2)) -  \overline{\varepsilon}(\omega(\ch(B_1))\ch(B_2))$ defines a bilinear form on the $\Z$-module $[\Sbimod]\simeq \mathcal{H}$ which we denote by $f$.
The following properties follow from a straightforward calculation.
\begin{gather}
f(v^{-1}h_1,h_2) = f(h_1,vh_2) = vf(h_1,h_2),\label{eq:paring, on v}\\
f(\underline{H}_sh_1,h_2) = f(h_1,\underline{H}_sh_2).\label{eq:paring, self-adjointness}
\end{gather}

If $B_1 = R_e$ and $B_2 = B_{\underline{x}}$, then $\Hom_{\Sbimod}(B_1,B_2) = (B_{\underline{x}})_e$ has the graded rank $p_{\underline{x}}^e(v^{-1})$ by Corollary~\ref{cor:graded rank of B_w}.
By Proposition~\ref{prop:image of character map of BS module} and $\overline{\underline{H}_{\underline{x}}} = \underline{H}_{\underline{x}}$, $\overline{\varepsilon}(\omega(\ch(R_e))\ch(B_{\underline{x}}))) = \overline{\varepsilon(\overline{\underline{H}_{\underline{x}}})} = \overline{\varepsilon(\underline{H}_{\underline{x}})} = \overline{p_{\underline{x}}^e(v)} = p_{\underline{x}}^e(v^{-1})$.
Hence $f(1,\underline{H}_{\underline{x}}) = 0$.
Therefore by \eqref{eq:paring, self-adjointness}, we have $f(\underline{H}_{(s_1,\dots,s_l)},\underline{H}_{(t_1,\dots,t_r)}) = f(1,\underline{H}_{(s_l,\dots,s_1,t_1,\dots,t_r)}) = 0$.
Since $\{\underline{H}_{\underline{x}}\}$ spans $\mathcal{H}$ as a $\Z[v^{\pm 1}]$-module, by \eqref{eq:paring, on v}, $f = 0$ on $\mathcal{H}\times \mathcal{H}$.
\end{proof}

\begin{cor}\label{cor:hom between BS modules}
The graded rank of $\Hom_{\Sbimod}(B_{\underline{x}},B_{\underline{y}})$ is $\sum_{w\in W}p_{\underline{x}}^w(v^{-1})p_{\underline{y}}^w(v^{-1})$.
\end{cor}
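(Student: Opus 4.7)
The plan is to invoke the preceding theorem and then evaluate the resulting trace expression. By Proposition~\ref{prop:image of character map of BS module}, $\ch(B_{\underline{x}}) = \underline{H}_{\underline{x}}$ and $\ch(B_{\underline{y}}) = \underline{H}_{\underline{y}}$, so the theorem gives
\[
\grk(\Hom_{\Sbimod}(B_{\underline{x}}, B_{\underline{y}})) = \overline{\varepsilon}(\omega(\underline{H}_{\underline{x}}) \underline{H}_{\underline{y}}).
\]
Writing $\underline{x} = (s_1, \ldots, s_l)$, $\underline{y} = (t_1, \ldots, t_r)$, Lemma~\ref{lem:a lemma on the polynomial p}(1) gives $\omega(\underline{H}_{\underline{x}}) = \underline{H}_{(s_l, \ldots, s_1)}$, so the product equals $\underline{H}_{(s_l, \ldots, s_1, t_1, \ldots, t_r)}$, which is bar-invariant. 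Unpacking the definition of $\overline{\varepsilon}$ therefore collapses the right-hand side to $p_{(s_l, \ldots, s_1, t_1, \ldots, t_r)}^e(v^{-1})$, and the task reduces to the Hecke-algebraic identity
\[
p_{(s_l, \ldots, s_1, t_1, \ldots, t_r)}^e(v) = \sum_{w \in W} p_{\underline{x}}^w(v)\, p_{\underline{y}}^w(v),
\]
after which substituting $v \mapsto v^{-1}$ yields the corollary.

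To prove this identity, I would expand $\underline{H}_{(s_l, \ldots, s_1)} \underline{H}_{\underline{y}}$ in the $\{H_w\}$-basis and extract the coefficient of $H_e$; combined with Lemma~\ref{lem:a lemma on the polynomial p}(2) (rewriting $p_{(s_l,\ldots,s_1)}^{w^{-1}}$ as $p_{\underline{x}}^w$), the identity reduces further to the standard fact
\[
\varepsilon(H_{w_1} H_{w_2}) = \delta_{w_1, w_2^{-1}},
\]
i.e.\ that $\{H_{w^{-1}}\}_{w \in W}$ is the basis of $\mathcal{H}$ dual to $\{H_w\}$ under $\varepsilon$. This fact is not recorded in the paper, but it admits a short induction on $\ell(w_1)$: one writes $H_{w_1} = H_s H_{w_1'}$ for a reduced factorization, uses the trace property to move $H_s$ to the right, and then invokes the quadratic relation $H_s^2 = (v^{-1} - v) H_s + 1$ together with a case split on whether $\ell(w_2 s)$ exceeds $\ell(w_2)$; the inductive hypothesis applied to $w_1'$ disposes of the principal term in each case, and a length-counting argument (comparing $\ell(sw'') < \ell(w'')$ to the reducedness of $s w''$) forces the ``correction'' contribution to vanish.

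The main obstacle is precisely this dual-basis identity for $\varepsilon$: everything else is formal, relying only on ingredients already in hand — Proposition~\ref{prop:image of character map of BS module}, both parts of Lemma~\ref{lem:a lemma on the polynomial p}, and the definition of $\overline{\varepsilon}$. As an alternative route that avoids the theorem altogether, one can use equation~\eqref{eq:adjoint and hom-formula} from the theorem's proof to identify $\Hom_{\Sbimod}(B_{\underline{x}}, B_{\underline{y}}) \simeq (B_{(s_l, \ldots, s_1, t_1, \ldots, t_r)})_e$, whose graded rank by Corollary~\ref{cor:graded rank of B_w} is $p_{(s_l, \ldots, s_1, t_1, \ldots, t_r)}^e(v^{-1})$; one then still needs the Hecke identity above to bring this into the stated form.
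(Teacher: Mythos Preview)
Your proof is correct and follows essentially the same route as the paper: apply the preceding theorem, use Lemma~\ref{lem:a lemma on the polynomial p} to rewrite $\omega(\underline{H}_{\underline{x}})$ and exploit bar-invariance, expand in the $\{H_w\}$-basis, and conclude via the dual-basis identity $\varepsilon(H_{w_1}H_{w_2}) = \delta_{w_1^{-1},w_2}$ together with $p_{(s_l,\dots,s_1)}^{w_1} = p_{\underline{x}}^{w_1^{-1}}$. One minor correction: the identity $\varepsilon(H_{w_1}H_{w_2}) = \delta_{w_1^{-1},w_2}$ \emph{is} recorded in the paper (cited there as \cite[(4.3)]{MR2441994}), so your inductive sketch is unnecessary.
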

\begin{proof}
Let $\underline{x} = (s_1,\dots,s_l)$ and set $\underline{x'} = (s_l,\dots,s_1)$.
Then we have $\omega(\underline{H}_{\underline{x}}) = \underline{H}_{\underline{x'}}$.
Hence $\overline{\varepsilon}(\omega(\underline{H}_{\underline{x}})\underline{H}_{\underline{y}}) = \overline{\varepsilon}(\underline{H}_{\underline{x'}}\underline{H}_{\underline{y}})$.
Since $\overline{\underline{H}_{\underline{x'}}} = \underline{H}_{\underline{x'}}$ and $\overline{\underline{H}_{\underline{y}}} = \underline{H}_{\underline{y}}$, we have $\overline{\varepsilon}(\underline{H}_{\underline{x'}}\underline{H}_{\underline{y}}) = \overline{\varepsilon(\underline{H}_{\underline{x'}}\underline{H}_{\underline{y}})}$.
We have
\begin{align*}
\varepsilon(\underline{H}_{\underline{x'}}\underline{H}_{\underline{y}}) & = \varepsilon\left(\left(\sum_{w_1\in W}p_{\underline{x'}}^{w_1}H_{w_1}\right)\left(\sum_{w_2\in W}p_{\underline{y}}^{w_2}H_{w_2}\right)\right)\\
& = \sum_{w_1,w_2\in W}p_{\underline{x'}}^{w_1}p_{\underline{y}}^{w_2}\varepsilon(H_{w_1}H_{w_2}).
\end{align*}
We have $\varepsilon(H_{w_1}H_{w_2}) = \delta_{w_1^{-1},w_2}$ \cite[(4.3)]{MR2441994}.
Since $p_{\underline{x'}}^{w_1} = p_{\underline{x}}^{w_1^{-1}}$ by Lemma~\ref{lem:a lemma on the polynomial p}, we get the corollary.
\end{proof}

\section{Relations with other categories}\label{sec:Relations with other categories}
\subsection{Sheaves on moment graphs}\label{subsec:Sheaves on moment graphs}
Define an algebra $\mathcal{Z}$ by
\[
\mathcal{Z} = \left\{(z_w)\in \prod_{w\in W}R\mid \text{$z_{tw}\equiv z_w\pmod{\alpha_t}$ for any $w\in W$ and reflection $t$ }\right\}.
\]
This is an $R$-algebra via $f(z_w) = (fz_w)$ for $f\in R$ and $(z_w)\in \mathcal{Z}$.
This is the structure algebra of the moment graph attached to $(W,S)$.
Fiebig developed the theory of sheaves on moment graphs, see \cite{MR2395170,MR2370278}.
In particular, he proved that the category of Soergel bimodules in the original sense is equivalent to a certain full-subcategory of $\mathcal{Z}$-modules when the representation $V$ is reflection faithful.
We generalize it.

If $f\in R$, then $(w(f))_{w\in W}\in \mathcal{Z}$.
For any graded $\mathcal{Z}$-module $M$, we define a right action of $f$ as the action of $(w(f))_{w\in W}\in \mathcal{Z}$.
Hence $M$ is an $R$-bimodule.
To make a graded $\mathcal{Z}$-module $M$ into an object of $\mathcal{C}'$, we need a finiteness assumption on $M$.

For a subset $I\subset W$, set
\[
\mathcal{Z}^I = \left\{(z_w)\in \prod_{w\in I}R\;\middle|\; 
\begin{array}{l}
\text{$z_{tw}\equiv z_w\pmod{\alpha_t}$ for any $w\in I$}\\
\text{and any reflection $t\in W$ such that $tw\in I$}
\end{array}
\right\}.
\]
We have a canonical homomorphism $\mathcal{Z}\to \mathcal{Z}^I$.
Let $\ModCat{\mathcal{Z}}^f$ be the full-subcategory of the category of graded $\mathcal{Z}$-modules which consists of modules such that the action of $\mathcal{Z}$ factors through $\mathcal{Z}\to \mathcal{Z}^I$ for some finite $I\subset W$.

For a $\mathcal{Z}$-module $M$, let $M_Q = Q\otimes_R M$ and set
\[
M_Q^x = \{m\in M_Q\mid \text{$(z_w)m = z_xm$ for any $(z_w)\in \mathcal{Z}$}\}.
\]
Then if $M\in \ModCat{\mathcal{Z}}^f$, we have $M_Q = \bigoplus_{x\in W}M_Q^x$ \cite[3.2]{MR2395170}.
It is easy to see that this defines an object of $\mathcal{C}'$.
Hence this gives a functor $F\colon\ModCat{\mathcal{Z}}^f\to \mathcal{C}'$.
\begin{prop}
Let $M,N\in \ModCat{\mathcal{Z}}^f$ and assume that $N$ is torsion-free as an $R$-module.
Then we have $\Hom_{\ModCat{\mathcal{Z}}^f}(M,N)\xrightarrow{\sim}\Hom_{\mathcal{C}'}(F(M),F(N))$.
\end{prop}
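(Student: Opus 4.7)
The plan is to show injectivity (which is essentially tautological) and then construct the inverse map by extending morphisms to the $Q$-localization, using torsion-freeness of $N$ to descend back. First I would verify that $F$ is well-defined on morphisms: if $\varphi\colon M\to N$ is $\mathcal{Z}$-linear, then $\varphi$ is left $R$-linear because $R$ embeds diagonally $f\mapsto (f)_{w\in W}$ into $\mathcal{Z}$, right $R$-linear because the right $R$-action is defined via $f\mapsto (w(f))_{w\in W}\in\mathcal{Z}$, and preserves the decomposition because for $m\in M_Q^x$ and $(z_w)\in\mathcal{Z}$ one has $(z_w)\varphi(m) = \varphi((z_w)m) = \varphi(z_x m) = z_x\varphi(m)$, which characterizes $\varphi(m)\in N_Q^x$. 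Injectivity of $\Hom_{\ModCat{\mathcal{Z}}^f}(M,N)\to \Hom_{\mathcal{C}'}(F(M),F(N))$ is then immediate because both sides are subsets of the set of abelian group homomorphisms $M\to N$.

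For surjectivity, take $\psi\colon F(M)\to F(N)$ in $\mathcal{C}'$. Extend $\psi$ along $M\to M_Q$ and $N\to N_Q$ to a left $R$-linear, decomposition-preserving map $\psi_Q\colon M_Q\to N_Q$. The key observation is that, by the very definition of $M_Q^x$, an element $(z_w)\in\mathcal{Z}$ acts on $m\in M_Q^x$ as left multiplication by the scalar $z_x\in R\subset Q$. Combined with the decomposition $M_Q = \bigoplus_{x\in W}M_Q^x$ (which holds because $M\in\ModCat{\mathcal{Z}}^f$, by the result of \cite{MR2395170} cited above), this lets us check $\mathcal{Z}$-linearity of $\psi_Q$ one summand at a time: for $m\in M_Q^x$,
\[
\psi_Q((z_w)m) \;=\; \psi_Q(z_x m) \;=\; z_x\psi_Q(m) \;=\; (z_w)\psi_Q(m),
\]
where the second equality uses left $R$-linearity of $\psi_Q$ and the third uses $\psi_Q(m)\in N_Q^x$.

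It remains to descend this $\mathcal{Z}$-linearity from $\psi_Q$ to $\psi$. Since $N$ is torsion-free as an $R$-module, the natural map $N\to N_Q$ is injective. For $m\in M$ and $(z_w)\in\mathcal{Z}$, both $\psi((z_w)m)$ and $(z_w)\psi(m)$ are elements of $N$, and their images in $N_Q$ agree by $\mathcal{Z}$-linearity of $\psi_Q$; hence they coincide in $N$. Therefore $\psi$ is $\mathcal{Z}$-linear, and this is the desired preimage.

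I expect no substantial obstacle: the content of the proof is entirely bookkeeping around the fact that the $\mathcal{Z}$-action on $M_Q$ is diagonalized by the decomposition into the pieces $M_Q^x$. The torsion-freeness assumption on $N$ enters only at the very last step, to identify $N$ with its image in $N_Q$; without it, $\psi$ would still extend to a $\mathcal{Z}$-linear $\psi_Q$, but we could not rule out that $\psi$ itself fails to commute with $\mathcal{Z}$ on elements killed in passing to $N_Q$.
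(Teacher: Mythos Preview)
Your proof is correct and follows essentially the same approach as the paper: both arguments verify $\mathcal{Z}$-linearity of a $\mathcal{C}'$-morphism by passing to the decomposition $M_Q=\bigoplus_x M_Q^x$, using that $(z_w)$ acts on $M_Q^x$ as left multiplication by $z_x$ together with left $R$-linearity and preservation of the decomposition, and then invoking injectivity of $N\hookrightarrow N_Q$ from the torsion-freeness hypothesis. Your write-up is simply more explicit than the paper's in spelling out well-definedness of $F$ on morphisms and injectivity of the comparison map.
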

\begin{proof}
Let $\varphi\colon F(M)\to F(N)$ be a homomorphism in $\mathcal{C}'$.
Then $\varphi$ induces a map $\varphi_w\colon M_Q^w\to N_Q^w$.
For $m\in M$, $z = (z_w)\in \mathcal{Z}$ and $w\in W$, we have $\varphi(zm)_w = \varphi_w((zm)_w) = \varphi_w(z_wm_w) = z_w\varphi_w(m_w) = z_w(\varphi(m)_w) = (z\varphi(m))_w$.
Since the map $N\to N_Q = \bigoplus_{w\in W}N_Q^w$ is injective by the assumption, we have $\varphi(zm) = z\varphi(m)$.
\end{proof}

Let $s\in S$ and define $\mathcal{Z}^s = \{(z_w)\in \mathcal{Z}\mid z_{ws} = z_w\}$.
This is a subalgebra of $\mathcal{Z}$.
We say that $V$ satisfies the GKM condition if $\{\alpha_{t_1},\alpha_{t_2}\}$ is linearly independent for any reflections $t_1\ne t_2$.
\begin{lem}
Assume that $V$ satisfies GKM condition.
Let $s\in S$ and $\delta\in V$ such that $\langle \alpha_s^\vee,\delta\rangle = 1$.
Then we have $\mathcal{Z} = \mathcal{Z}^s\oplus (w(\delta))_{w\in W}\mathcal{Z}^s$.
\end{lem}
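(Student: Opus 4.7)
The plan is to imitate the rank-one decomposition $R = R^s \oplus \delta R^s$ of Lemma~\ref{lem:basis over R^s} componentwise. Given $z = (z_w)_{w \in W} \in \mathcal{Z}$, I will set
\[
b_w = \frac{z_w - z_{ws}}{w(\alpha_s)}, \qquad a_w = z_w - w(\delta)\, b_w,
\]
and then verify: (i) $b_w \in R$, (ii) $(a_w), (b_w) \in \mathcal{Z}^s$, and (iii) the decomposition is direct. For (i), note that $ws = (wsw^{-1})w$, so the defining relation for $\mathcal{Z}$ applied to the reflection $wsw^{-1}$ gives $z_w \equiv z_{ws} \pmod{\alpha_{wsw^{-1}}}$, and $\alpha_{wsw^{-1}} \in \Coef^\times w(\alpha_s)$.

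The easier part of (ii) is the right $s$-invariance. Using $ws(\alpha_s) = -w(\alpha_s)$ and $ws(\delta) = w(\delta) - w(\alpha_s)$, a short computation yields $b_{ws} = b_w$ and $a_{ws} = a_w$. The substantive step is to show that $b \in \mathcal{Z}$ in the first place, i.e., $b_{tw} \equiv b_w \pmod{\alpha_t}$ for every reflection $t$ and every $w$. Applying the $\mathcal{Z}$-condition to $z$ at $w$ and at $ws$ (and noting $tws = t(ws)$), we get $z_{tw} - z_{tws} \equiv z_w - z_{ws} \pmod{\alpha_t}$; combined with $tw(\alpha_s) = t(w(\alpha_s)) \equiv w(\alpha_s) \pmod{\alpha_t}$, this produces
\[
(b_{tw} - b_w)\, w(\alpha_s) \equiv 0 \pmod{\alpha_t}.
\]
If $t = wsw^{-1}$ then $tw = ws$ and $b_{tw} = b_{ws} = b_w$ by the first step. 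Otherwise the GKM hypothesis says $\{\alpha_t, \alpha_{wsw^{-1}}\} = \{\alpha_t, c\cdot w(\alpha_s)\}$ (with $c \in \Coef^\times$) is linearly independent, which is exactly what is needed for the image of $w(\alpha_s)$ in $R/\alpha_t R$ to be a nonzerodivisor, yielding $b_{tw} \equiv b_w \pmod{\alpha_t}$. Once $b \in \mathcal{Z}^s$ is established, $(w(\delta))_{w\in W}$ itself lies in $\mathcal{Z}$ (since $tw(\delta) - w(\delta) = -\langle \alpha_t^\vee, w(\delta)\rangle\alpha_t$), so $a = z - (w(\delta))_{w\in W}\cdot b \in \mathcal{Z}^s$.

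For directness (iii): if $a + (w(\delta))_{w\in W}\cdot b = 0$ with $a,b \in \mathcal{Z}^s$, then at index $w$ we get $a_w + w(\delta) b_w = 0$ and at index $ws$, using $s$-invariance, $a_w + ws(\delta) b_w = 0$; subtracting yields $w(\alpha_s)\, b_w = 0$ in the integral domain $R$, hence $b_w = 0$ and then $a_w = 0$.

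The main obstacle is the regularity argument in the case $t \ne wsw^{-1}$: this is precisely where GKM is indispensable. Without linear independence of $\alpha_t$ and $w(\alpha_s)$, the identity $(b_{tw} - b_w)w(\alpha_s) \equiv 0 \pmod{\alpha_t}$ would not force $b_{tw} \equiv b_w \pmod{\alpha_t}$, and the prospective summand $b$ would fail to land in $\mathcal{Z}$ at all, so the whole decomposition would collapse.
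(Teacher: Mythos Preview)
Your proof is correct and is essentially identical to the paper's: the paper defines $y_w$ by $z_w - z_{ws} = w(\alpha_s)y_w$ and $x_w = z_w - w(\delta)y_w$, checks $y_{ws}=y_w$, uses the GKM condition (via the same congruence $w(\alpha_s)(y_w - y_{tw})\equiv 0\pmod{\alpha_t}$ for $t\ne wsw^{-1}$) to get $y\in\mathcal{Z}^s$, and then verifies $x\in\mathcal{Z}^s$ and directness by the same subtraction argument. Your write-up is slightly more explicit about why $(w(\delta))_{w\in W}\in\mathcal{Z}$ and about the nonzerodivisor step, but the argument is the same.
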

\begin{proof}
Let $z = (z_w)_{w\in W}$.
For each $w\in W$, there exists $y_w\in R$ such that $z_{w} - z_{ws} = w(\alpha_s)y_w$.
Let $t\ne wsw^{-1}$ be a reflection.
Then $tw(\alpha_s)\equiv w(\alpha_s)\pmod{\alpha_t}$.
Hence $w(\alpha_s)(y_w - y_{tw})\equiv w(\alpha_s)y_w - tw(\alpha_s)y_{tw} = (z_w - z_{tw}) - (z_{ws} - z_{tws})\equiv 0\pmod{\alpha_t}$.
By the GKM condition, $\alpha_t$ and $w(\alpha_s)$ are linearly independent.
Hence $y_w\equiv y_{tw}\pmod{\alpha_t}$.
On the other hand, for $t = wsw^{-1}$, we have $y_{tw} = y_{ws} = y_w$ by the definition of $y_w$.
Hence $y = (y_w)_{w\in W}\in \mathcal{Z}^s$.
We also set $x_w = z_w - w(\delta)y_w$.
Then we have $x_{ws} = z_{ws} - ws(\delta)y_w = z_w - w(\alpha_s)y_w - ws(\delta)y_w = z_w - w(\alpha_s)y_w - (w(\delta) - w(\alpha_s))y_w = z_w - w(\delta)y_w = x_w$.
Hence $x = (x_w)_{w\in W}\in \mathcal{Z}^s$ and we have $z = x + (w(\delta))_{w\in W}y$.
Reversing this argument, we can get the uniqueness of $x,y$.
\end{proof}
\begin{prop}
Assume that $V$ satisfies GKM condition.
Let $M\in \ModCat{Z}^f$.
Then $F(\mathcal{Z}\otimes_{\mathcal{Z}^s}M)\simeq F(M)\otimes_R B_s$.
\end{prop}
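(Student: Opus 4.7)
The plan is to construct an explicit natural morphism $\Phi\colon F(M)\otimes_R B_s\to F(\mathcal{Z}\otimes_{\mathcal{Z}^s}M)$ and verify that it is an isomorphism in $\mathcal{C}'$ by matching the two rank-two direct-sum decompositions furnished by the preceding lemma and by Lemma~\ref{lem:basis over R^s}. Since the right $R$-action on $F(M)$ is $m\cdot f=(u(f))_{u\in W}\cdot m$ and $B_s=R\otimes_{R^s}R$, one first absorbs the left factor to identify $F(M)\otimes_R B_s=F(M)\otimes_{R^s}R$, and then sets
\[
\Phi(m\otimes g)=(u(g))_{u\in W}\otimes m.
\]
The sequence $(u(g))_{u\in W}$ lies in $\mathcal{Z}$ by the standard reflection identity $tu(g)\equiv u(g)\pmod{\alpha_t}$, and for $h\in R^s$ the sequence $(u(h))_{u\in W}$ lies in $\mathcal{Z}^s$ since $(us)(h)=u(h)$; this is exactly what is needed for well-definedness across $\otimes_{R^s}$. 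Right $R$-linearity is immediate from $(u(gf))_u=(u(g))_u(u(f))_u$, and left $R$-linearity follows from the fact that the constant sequence $(f)_{u\in W}$ always lies in $\mathcal{Z}^s$, so it may be moved across the tensor product.

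Bijectivity on the level of underlying $R$-bimodules is then matched to the two decompositions. By Lemma~\ref{lem:basis over R^s}, $R=R^s\oplus\delta_s R^s$, so every element of $F(M)\otimes_{R^s}R$ is uniquely $m_0\otimes 1+m_1\otimes\delta_s$. By the preceding lemma, $\mathcal{Z}=\mathcal{Z}^s\oplus(u(\delta_s))_{u\in W}\mathcal{Z}^s$, so every element of $\mathcal{Z}\otimes_{\mathcal{Z}^s}M$ is uniquely $1\otimes n_0+(u(\delta_s))_{u\in W}\otimes n_1$. Under these identifications $\Phi$ is simply the identity on $M\oplus M$, hence a bijection.

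The delicate step, and the main obstacle, is to verify that $\Phi$ preserves the $W$-decomposition after $\otimes_R Q$, because the raw expression $(u(g))_{u\in W}\otimes m$ does not simplify inside $\mathcal{Z}\otimes_{\mathcal{Z}^s}M$ (the element $(u(g))_u$ is not in $\mathcal{Z}^s$). To deal with this I would pass to $\mathcal{Z}\otimes_R Q$, which for $M\in\ModCat{\mathcal{Z}}^f$ acts through $\mathcal{Z}^I\otimes_R Q=Q^I$ and contains the orthogonal idempotents $e_w$ satisfying $(u(f))_u\cdot e_w=w(f)e_w$, together with the relation $e_w\otimes m=0$ in $\mathcal{Z}\otimes_{\mathcal{Z}^s}M_Q$ whenever $m\in M_Q^u$ with $u\notin\{w,ws\}$ (which comes from $\mathcal{Z}^s\otimes_R Q$ being the diagonal on each $\langle s\rangle$-orbit). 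Decomposing $1=\sum_{w'}e_{w'}$ and applying $\Phi$ to the explicit bases of $(B_s)_Q^e$ and $(B_s)_Q^s$ given in \eqref{eq:decomposition of B_s in Q}, a short calculation yields
\begin{align*}
\Phi(m\otimes(\delta_s\otimes 1-1\otimes s(\delta_s)))&=w(\alpha_s)\,e_w\otimes m,&& m\in M_Q^w,\\
\Phi(m'\otimes(\delta_s\otimes 1-1\otimes\delta_s))&=-w(\alpha_s)\,e_w\otimes m',&& m'\in M_Q^{ws}.
\end{align*}
Both images lie in the $w$-component of $F(\mathcal{Z}\otimes_{\mathcal{Z}^s}M)$ since $(u(f))_u$ acts on $e_w\otimes(-)$ by the scalar $w(f)$, and they span it because $w(\alpha_s)\in Q^\times$ and the $w$-component equals $e_w\otimes(M_Q^w\oplus M_Q^{ws})$. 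Combined with the bijectivity established above, this shows that $\Phi$ is the desired isomorphism in $\mathcal{C}'$.
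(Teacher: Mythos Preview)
Your proof is correct and follows exactly the paper's approach: the same explicit map $\Phi(m\otimes g)=(u(g))_{u\in W}\otimes m$ is constructed, and bijectivity is read off from the matched decompositions $F(M)\otimes_{R^s}R=F(M)\otimes 1\oplus F(M)\otimes\delta_s$ and $\mathcal{Z}\otimes_{\mathcal{Z}^s}M=1\otimes M\oplus(u(\delta_s))_u\otimes M$. You actually supply more than the paper does, since the paper stops at the $R$-bimodule isomorphism while you additionally verify (correctly, via the idempotents $e_w$) that $\Phi$ respects the $W$-decomposition over $Q$, which is what makes it a morphism in $\mathcal{C}'$.
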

\begin{proof}
Take $\delta\in V$ such that $\langle\alpha_s^\vee,\delta\rangle = 1$.
Define $F(M)\otimes_R B_s = F(M)\otimes_{R^s}R\to \mathcal{Z}\otimes_{\mathcal{Z}^s}M$ by $m\otimes f\mapsto (w(f))_{w\in W}\otimes m$.
Since $F(M)\otimes_{R^s}R = F(M)\otimes 1\oplus F(M)\otimes\delta$ and $\mathcal{Z}\otimes_{\mathcal{Z}^s}M = 1\otimes M\oplus (w(\delta))_{w\in W}\otimes M$, this homomorphism is an isomorphism.
\end{proof}

Define a $\mathcal{Z}$-module structure on $R$ by $(z_w)_{w\in W}f = z_ef$ for $(z_w)_{w\in W}\in \mathcal{Z}$ and $f\in R$ and denote this $\mathcal{Z}$-module by $R(e)$.
Then $F(R(e)) = R_e$.
Let $\ModCat{\mathcal{Z}}^{\textrm{S}}$ be the full-subcategory of $\ModCat{\mathcal{Z}}^f$ consisting of the direct summands of direct sums of $\{\mathcal{Z}\otimes_{\mathcal{Z}^{s_1}}\dotsm\otimes_{\mathcal{Z}^{s_l}}R(e)(n)\mid s_1,\dots,s_l\in S,n\in\Z\}$.
The following theorem follows from the above argument.
\begin{thm}
Assume that $V$ satisfies GKM condition.
The functor $F$ induces an equivalence $\ModCat{\mathcal{Z}}^{\mathrm{S}}\to \Sbimod$.
\end{thm}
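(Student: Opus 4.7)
The plan is to leverage the three results just established in this subsection---the identity $F(R(e))=R_e$, the tensor-compatibility $F(\mathcal{Z}\otimes_{\mathcal{Z}^s}M)\simeq F(M)\otimes_R B_s$, and the Hom-isomorphism $\Hom_{\ModCat{\mathcal{Z}}^f}(M,N)\simeq\Hom_{\mathcal{C}'}(F(M),F(N))$ valid for torsion-free $N$---and to reduce the theorem to two verifications: that generators go to generators, and that idempotents can be lifted through $F$.

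First I would iterate the tensor-compatibility proposition to identify $F(\mathcal{Z}\otimes_{\mathcal{Z}^{s_1}}\dotsm\otimes_{\mathcal{Z}^{s_l}}R(e)(n))$ with $B_{s_l}\otimes\dotsm\otimes B_{s_1}(n)$. Since the indexing tuples in $S^l$ are closed under reversal, the image set coincides with the full family of generators of $\Sbimod$, so $F$ maps $\ModCat{\mathcal{Z}}^{\mathrm{S}}$ into $\Sbimod$.

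Next, in order to apply the preceding Hom-isomorphism I need every object of $\ModCat{\mathcal{Z}}^{\mathrm{S}}$ to be torsion-free as an $R$-module. Using the decomposition $\mathcal{Z}=\mathcal{Z}^s\oplus(w(\delta))_{w\in W}\mathcal{Z}^s$ proved just before this theorem, an easy induction shows that each generator $\mathcal{Z}\otimes_{\mathcal{Z}^{s_1}}\dotsm\otimes_{\mathcal{Z}^{s_l}}R(e)$ is free as a right $R$-module, so any direct summand is torsion-free. With this hypothesis the Hom-isomorphism applies, and because $\Sbimod\subset \mathcal{C}'$ is a full subcategory, this yields full faithfulness of the restricted functor $F\colon\ModCat{\mathcal{Z}}^{\mathrm{S}}\to \Sbimod$.

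For essential surjectivity, given $B\in\Sbimod$ I would fix a Bott--Samelson $B_{\underline{x}}(n)$ of which $B$ is a direct summand and choose the preimage $X\in\ModCat{\mathcal{Z}}^{\mathrm{S}}$ with $F(X)\simeq B_{\underline{x}}(n)$ produced in the first step. Full faithfulness lifts the idempotent cutting out $B$ to an idempotent $\widetilde{e}\in\End_{\ModCat{\mathcal{Z}}^f}(X)$; since idempotents split in $\ModCat{\mathcal{Z}}^f$, the image of $\widetilde{e}$ is a direct summand of $X$, hence belongs to $\ModCat{\mathcal{Z}}^{\mathrm{S}}$, and is sent by $F$ to $B$. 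The only nontrivial checkpoint is the right $R$-module freeness of the generators that feeds the torsion-free hypothesis; everything else is formal once this is in place.
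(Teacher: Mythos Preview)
Your proposal is correct and is precisely the argument the paper intends: the paper's own proof is the single sentence ``follows from the above argument,'' and you have spelled out the details---generators to generators via the tensor-compatibility proposition, full faithfulness via the Hom-isomorphism (using torsion-freeness of the generators, hence of their summands), and essential surjectivity by lifting idempotents. One small wording point: the torsion-freeness hypothesis in the Hom-proposition concerns the \emph{left} $R$-module structure (since $N_Q=Q\otimes_R N$), but your induction via $\mathcal{Z}=\mathcal{Z}^s\oplus(w(\delta))_w\mathcal{Z}^s$ in fact yields left freeness (as $R\subset\mathcal{Z}^s$), so the argument goes through unchanged.
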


\subsection{Double leaves}
Let $\underline{x}\in S^l$, $\underline{y}\in S^{l'}$, $\boldsymbol{e}\in \{0,1\}^l$ and $\boldsymbol{f}\in \{0,1\}^{l'}$ such that $\underline{x}^{\boldsymbol{e}} = \underline{y}^{\boldsymbol{f}}$.
Set $w = \underline{x}^{\boldsymbol{e}}$ and fix its reduced expression $\underline{w}$.
Then we have $\LL_{\underline{x},\boldsymbol{e}}\colon B_{\underline{x}}\to B_{\underline{w}}$ and $\LL^*_{\underline{y},\boldsymbol{f}}\colon B_{\underline{w}}\to B_{\underline{y}}$.
We put $\DLL_{\boldsymbol{e},\boldsymbol{f}} = \LL^*_{\underline{y},\boldsymbol{f}}\circ \LL_{\underline{x},\boldsymbol{e}}\colon B_{\underline{x}}\to B_{\underline{y}}$ which we call a double leaf.

\begin{thm}\label{thm:double leaves}
The set of double leaves $\{\DLL_{\boldsymbol{e},\boldsymbol{f}}\mid \underline{x}^{\boldsymbol{e}} = \underline{y}^{\boldsymbol{f}}\}$ is a basis of $\Hom_{\BSbimod}(B_{\underline{x}},B_{\underline{y}})$.
\end{thm}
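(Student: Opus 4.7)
The plan is to match the graded rank of the $R$-span of the double leaves with $\grk\Hom_{\BSbimod}(B_{\underline{x}},B_{\underline{y}})$ from Corollary~\ref{cor:hom between BS modules}, and then prove $R$-linear independence of the $\DLL_{\boldsymbol{e},\boldsymbol{f}}$; together these show the double leaves form a basis. For the rank match, note that $\LL_{\underline{x},\boldsymbol{e}}$ has degree $d(\boldsymbol{e})$ by Definition~\ref{defn:light leaves}, and $\LL^*_{\underline{y},\boldsymbol{f}}$ has degree $d(\boldsymbol{f})$ (as already used in the proof of Theorem~\ref{thm:filtration theorem}), so $\deg\DLL_{\boldsymbol{e},\boldsymbol{f}}=d(\boldsymbol{e})+d(\boldsymbol{f})$. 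Summing $v^{-\deg}$ over all admissible pairs and applying Lemma~\ref{lem:defect formula} yields
\[
\sum_{w\in W}\left(\sum_{\underline{x}^{\boldsymbol{e}}=w}v^{-d(\boldsymbol{e})}\right)\left(\sum_{\underline{y}^{\boldsymbol{f}}=w}v^{-d(\boldsymbol{f})}\right)=\sum_{w\in W}p_{\underline{x}}^{w}(v^{-1})\,p_{\underline{y}}^{w}(v^{-1}),
\]
which equals $\grk\Hom_{\BSbimod}(B_{\underline{x}},B_{\underline{y}})$; since the $\Hom$ is graded free (via the adjunction identification used in the proof of Theorem~4.7), it will suffice to prove $R$-linear independence.

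For linear independence, fix a linear order $w_1,w_2,\ldots$ on $W$ extending the Bruhat order, so each $I_k:=\{w_1,\ldots,w_k\}$ is closed. Each $\DLL_{\boldsymbol{e},\boldsymbol{f}}$ factors through the Bott--Samelson associated to $w=\underline{y}^{\boldsymbol{f}}$, hence takes values in $B_{\underline{y},\le w}$. Suppose $\sum c_{\boldsymbol{e},\boldsymbol{f}}\DLL_{\boldsymbol{e},\boldsymbol{f}}=0$ is a nontrivial $R$-linear relation. Let $k$ be maximal such that some $c_{\boldsymbol{e},\boldsymbol{f}}\ne 0$ with $\underline{y}^{\boldsymbol{f}}=w_k=:w$, and among such pairs let $\boldsymbol{e}^*$ be the $<_{\underline{x},w}$-minimum $\boldsymbol{e}$ for which some $c_{\boldsymbol{e}^*,\boldsymbol{f}}\ne 0$.

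Compose the relation with the projection $\pi_{\underline{y}}^{w}\colon B_{\underline{y},I_{k}}\twoheadrightarrow B_{\underline{y},I_{k}}/B_{\underline{y},I_{k-1}}\simeq B_{\underline{y},\le w}/B_{\underline{y},<w}$ (identification via Corollary~\ref{cor:on filtration}(1)), then evaluate on $b_{\underline{x},\boldsymbol{e}^*}\in B_{\underline{x}}$. Terms with $\underline{y}^{\boldsymbol{f}}=w_j$ for $j<k$ already have image in $B_{\underline{y},I_{k-1}}$ and die; terms with $j>k$ carry zero coefficient by maximality of $k$; for the remaining terms (with $\underline{y}^{\boldsymbol{f}}=\underline{x}^{\boldsymbol{e}}=w$) Proposition~\ref{prop:dual basis of light leaves} gives $\LL_{\underline{x},\boldsymbol{e}}(b_{\underline{x},\boldsymbol{e}^*})=0$ whenever $\boldsymbol{e}^*<\boldsymbol{e}$ (true for all surviving $\boldsymbol{e}\ne\boldsymbol{e}^*$ by minimality), while $\LL_{\underline{x},\boldsymbol{e}^*}(b_{\underline{x},\boldsymbol{e}^*})=u_{\underline{w}}$. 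Thus the relation collapses to
\[
\sum_{\underline{y}^{\boldsymbol{f}}=w}c_{\boldsymbol{e}^*,\boldsymbol{f}}\,\pi_{\underline{y}}^{w}\bigl(\LL^{*}_{\underline{y},\boldsymbol{f}}(u_{\underline{w}})\bigr)=0
\]
inside $B_{\underline{y},\le w}/B_{\underline{y},<w}$, and Theorem~\ref{thm:filtration theorem} forces every $c_{\boldsymbol{e}^*,\boldsymbol{f}}=0$, contradicting the choice of $\boldsymbol{e}^*$.

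The main technical point is orchestrating the two nested inductions: an \emph{outer} induction across elements of $W$, handled by the Bruhat-compatible filtration $\{B_{\underline{y},I_{k}}\}$ and the maximality of $k$, and an \emph{inner} induction across defect sequences, handled by the $<_{\underline{x},w}$-minimality of $\boldsymbol{e}^*$. Each step is collapsed by the triangularity supplied by Proposition~\ref{prop:dual basis of light leaves}, and the rank match via Corollary~\ref{cor:hom between BS modules} and Lemma~\ref{lem:defect formula} is then purely bookkeeping.
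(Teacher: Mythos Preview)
Your proof is correct and follows essentially the same approach as the paper: match the graded rank via Corollary~\ref{cor:hom between BS modules} and Lemma~\ref{lem:defect formula}, then prove linear independence by choosing a Bruhat-maximal $w$ among the nonvanishing terms, projecting to $B_{\underline{y},\le w}/B_{\underline{y},<w}$, evaluating at $b_{\underline{x},\boldsymbol{e}^*}$ for the $<_{\underline{x},w}$-minimal surviving $\boldsymbol{e}^*$, and invoking Proposition~\ref{prop:dual basis of light leaves} and Theorem~\ref{thm:filtration theorem}. The only cosmetic difference is that the paper picks $w$ of maximal length in the support set and works with the closure $\overline{I}$, whereas you fix a global linear refinement of the Bruhat order; both produce a closed set with $w$ maximal, so the arguments are interchangeable.
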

\begin{proof}
The degree of $\DLL_{\boldsymbol{e},\boldsymbol{f}}$ is $d(\boldsymbol{e}) + d(\boldsymbol{f})$.
Therefore we have $\sum_{\underline{x}^{\boldsymbol{e}} = \underline{y}^{\boldsymbol{f}}}v^{-\deg(\DLL_{\boldsymbol{e},\boldsymbol{f}})} = \sum_{w\in W}\sum_{\underline{x}^{\boldsymbol{e}} = w}v^{-d(\boldsymbol{e})}\sum_{\underline{x}^{\boldsymbol{f}} = w}v^{-d(\boldsymbol{f})} = \sum_{w\in W}p_{\underline{x}}^w(v^{-1})p_{\underline{y}}^w(v^{-1})$.
This is equal to the graded rank of $\Hom_{\BSbimod}(B_{\underline{x}},B_{\underline{y}})$ by Corollary~\ref{cor:hom between BS modules}.
Therefore it is sufficient to prove that the set of double leaves is linearly independent.

Assume that $\sum c_{\boldsymbol{e},\boldsymbol{f}}\DLL_{\boldsymbol{e},\boldsymbol{f}} = 0$.
Set $I = \{\underline{x}^{\boldsymbol{e}}\mid \underline{x}^{\boldsymbol{e}} = \underline{y}^{\boldsymbol{f}},c_{\boldsymbol{e},\boldsymbol{f}}\ne 0\}$ and assume that $I$ is not empty.
Let $w\in I$ be an element with maximal length.
Let $\overline{I} = \{x\in W\mid \text{there exists $y\in I$ such that $x\le y$}\}$.
Then $w$ is also has maximal length in $\overline{I}$ and $\overline{I}$ is closed.
Set $\overline{I}' = \overline{I}\setminus\{w\}$.
Then $c_{\boldsymbol{e},\boldsymbol{f}}\DLL_{\boldsymbol{e},\boldsymbol{f}}$ factors through $B_{\underline{y},\overline{I}}\hookrightarrow B_{\underline{y}}$.
Since $\overline{I}'\supset I\setminus\{w\}$, we have $\pi_{\underline{y}}^w\circ (c_{\boldsymbol{e},\boldsymbol{f}}\DLL_{\boldsymbol{e},\boldsymbol{f}}) = 0$ unless $\underline{x}^{\boldsymbol{e}} = w$.

Set $E = \{\boldsymbol{e}\mid c_{\boldsymbol{e},\boldsymbol{f}}\ne 0, \underline{x}^{\boldsymbol{e}} = w\}$.
Recall that we have a total order ${<} = {<_{\underline{x},w}}$ on this set.
Let $\boldsymbol{e}'\in E$ be the minimal element.
Then for $\boldsymbol{e}\in E$, we have $\LL_{\underline{x},\boldsymbol{e}}(b_{\underline{x},\boldsymbol{e}'}) = 0$ unless $\boldsymbol{e} = \boldsymbol{e}'$ by Proposition~\ref{prop:dual basis of light leaves}.
Therefore we have $\sum_{\underline{y}^{\boldsymbol{f}} = w}c_{\boldsymbol{e}',\boldsymbol{f}}(\pi_{\underline{y}}^w(\DLL_{\boldsymbol{e}',\boldsymbol{f}}(b_{\underline{x},\boldsymbol{e}'}))) = 0$.
Since $\LL_{\underline{x},\boldsymbol{e}'}(b_{\underline{x},\boldsymbol{e}'}) = u_{\underline{w}}$ (Proposition~\ref{prop:dual basis of light leaves}), we have $\sum_{\underline{y}^{\boldsymbol{f}} = w}{c_{\boldsymbol{e}',\boldsymbol{f}}}\pi_{\underline{y}}^w(\LL^*_{\underline{y},\boldsymbol{f}}(u_{\underline{w}})) = 0$.
By Theorem~\ref{thm:filtration theorem}, $c_{\boldsymbol{e}',\boldsymbol{f}} = 0$ for any $\boldsymbol{f}$ such that $\underline{y}^{\boldsymbol{f}} = w$.
This is a contradiction.
\end{proof}

\subsection{The category of Elias-Williamson}
Let $\mathcal{D}$ be the category defined in \cite{MR3555156}.
We also use notation as in \cite{MR3555156}.
In this subsection, we assume the following.
\begin{itemize}
\item Assumptions on $V$ in \cite{MR3555156}.
\item The homomorphism attached to a $2m$-valent graph in \cite[Definition~5.13]{MR3555156} is a homomorphism in $\mathcal{C}$ and sends $u_{\bullet}$ to $u_{\bullet}$, namely gives a homomorphism as in Assumption~\ref{assump:existecne of 2-coloerd map}.
\end{itemize}
If the restriction of $V$ to the group generated by $\{s,t\}$ is fully-faithful for any $s,t\in S$ such that $st$ has the finite order, then these conditions hold.

\begin{thm}
The category $\mathcal{D}$ is equivalent to $\BSbimod$.
Therefore the category $\Kar(\mathcal{D})$ is equivalent to $\Sbimod$.
\end{thm}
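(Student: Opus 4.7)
The plan is to construct an explicit functor $F\colon \mathcal{D}\to \BSbimod$ on generators, verify the Elias--Williamson relations, and then use the double leaves basis (Theorem~\ref{thm:double leaves}) to deduce fully faithfulness. Since both categories have the same objects (sequences in $S$, up to the grading shift), essential surjectivity is automatic; the Karoubi-envelope statement then follows because $\Sbimod$ is by definition the idempotent completion of $\BSbimod$ inside the idempotent-complete category $\mathcal{C}$.

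First I would define $F$ on objects by $F(\underline{x}) = B_{\underline{x}}$ and on generating morphisms by the obvious $R$-bimodule maps already appearing in the paper: the unit/counit of each $B_s$ are represented by $m^s\colon B_s\to R$ and its dual $R\to B_s$, the trivalent vertices by $i_0^s\colon B_s\otimes B_s\to B_s$ and its dual, the polynomial generators by multiplication on $R$, and the $2m$-valent vertex by the map supplied by the hypothesis of this subsection (which is exactly the map in Assumption~\ref{assump:existecne of 2-coloerd map}, normalized to send $u_\bullet$ to $u_\bullet$). I would then check each Elias--Williamson relation in turn. The one-color relations (Frobenius unit, associativity, needle, barbell) and the polynomial forcing relations are straightforward once one observes that $B_s = R\otimes_{R^s}R$ is the Frobenius object associated with $R/R^s$, and that multiplication by $f\in R$ commutes through $m^s$ and $i_0^s$ in the expected way via $\partial_s$. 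The Jones--Wenzl / two-color relations reduce, by the hypothesis, to the claim that the $2m$-valent map is a morphism in $\mathcal{C}$ preserving $u_\bullet$; the rank-two identities then follow because inside $\mathcal{C}$ any two parallel morphisms between $B_{\underline{x}}$ and $B_{\underline{y}}$ of the right degree that agree on $u_{\underline{x}}$ must coincide modulo a verification in a rank-two parabolic, where Soergel's theorem (or our Theorem~\ref{thm:double leaves} applied in rank two) pins them down.

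Once $F$ is constructed, full faithfulness is the clean part. On the $\mathcal{D}$-side, Elias--Williamson prove that double leaves (built from exactly the same inductive recipe as in Definition~\ref{defn:light leaves}, but using the diagrammatic generators) form a basis of $\Hom_{\mathcal{D}}(\underline{x},\underline{y})$. By construction $F$ sends each diagrammatic light leaf $\LL_{\underline{x},\boldsymbol{e}}$ to our $\LL_{\underline{x},\boldsymbol{e}}$ and, using the duality $D(B_{\underline{w}})\simeq B_{\underline{w}}$ together with the fact that $F$ intertwines $D$ with the diagrammatic duality (rotate by $180^\circ$), it sends diagrammatic double leaves $\DLL_{\boldsymbol{e},\boldsymbol{f}}^{\mathcal{D}}$ to our $\DLL_{\boldsymbol{e},\boldsymbol{f}}$. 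By Theorem~\ref{thm:double leaves} the latter form a basis of $\Hom_{\BSbimod}(B_{\underline{x}},B_{\underline{y}})$. Thus $F$ sends a basis to a basis, hence induces an isomorphism on Hom spaces. Combined with essential surjectivity on objects, $F$ is an equivalence $\mathcal{D}\xrightarrow{\sim}\BSbimod$, and passing to idempotent completions gives $\Kar(\mathcal{D})\simeq \Sbimod$.

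The main obstacle will be the first step: checking that the proposed values of $F$ on the Elias--Williamson generators actually satisfy every relation in their long list. The one-color relations and the polynomial/slide relations are routine bimodule calculations in $B_s = R\otimes_{R^s}R$; the real work is in the two-color part, which must be reduced to the rank-two subsystem. Here the hypothesis of this subsection is crucial: it packages exactly what is needed so that the $2m$-valent generator lifts to $\mathcal{C}$, and one can then argue rank-two relations by comparing with the rank-two category, where the dimension counts from Theorem~\ref{thm:graded rank of B_x^w} and Corollary~\ref{cor:hom between BS modules} (applied to $W' = \langle s,t\rangle$) force any two parallel morphisms that agree on $u_{\underline{x}}$ to coincide. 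Given this reduction, verifying the rank-two relations becomes a finite check inside the reflection-faithful case, which is already known from \cite{MR3555156}.
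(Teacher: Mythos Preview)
Your overall strategy matches the paper's: construct a functor $\mathcal{D}\to\BSbimod$ on generators, check it is well-defined, note essential surjectivity, and deduce full faithfulness from the fact that $F$ carries the diagrammatic double-leaves basis to the bimodule double-leaves basis (Theorem~\ref{thm:double leaves} here, \cite[Theorem~6.12]{MR3555156} there).

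The one place where you take a longer and shakier road is the verification of the Elias--Williamson relations. You propose to re-check them inside $\mathcal{C}$, with a somewhat hand-wavy rank-two reduction for the two-color relations. The paper sidesteps all of this: Elias--Williamson already verified in \cite[Claim~5.14]{MR3555156} that the relations hold for the functor from $\mathcal{D}$ to the category of $R$-bimodules, and since the forgetful functor from $\BSbimod$ (indeed from $\mathcal{C}$) to $R$-bimodules is faithful, any equality of morphisms that holds in $R$-bimodules automatically holds in $\BSbimod$. Thus once one checks that each \emph{generating} morphism lies in $\mathcal{C}$---which is trivial for the one-color generators and is precisely the standing hypothesis of this subsection for the $2m$-valent vertices---the relations come for free, with no need for your rank-two argument or the claim about morphisms agreeing on $u_{\underline{x}}$. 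Your route would also work (the cyclicity of $B_{\underline{x}}$ on $u_{\underline{x}}$ does make the two-color check tractable), but it duplicates work already done in \cite{MR3555156} and leaves more to justify.
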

\begin{proof}
In \cite[Definition 5.13]{MR3555156}, the functor from $\mathcal{D}$ to the category of $R$-bimodules is constructed.
This functor sends $\underline{x}\in \mathcal{D}$ to $B_{\underline{x}}$ which can be regarded as a map from the objects of $\mathcal{D}$ to those of $\BSbimod$.
Each generator of homomorphisms in $\mathcal{D}$ is sent to a homomorphism in $\mathcal{C}$.
The relations in $\mathcal{D}$ are satisfied in the category of $R$-bimodules~\cite[Claim 5.14]{MR3555156}.
Since $\BSbimod$ is a full subcategory of the category of $R$-bimodules, the relations are preserved in $\BSbimod$.
Hence we have a functor from $\mathcal{D}$ to $\BSbimod$.
This functor is obviously essentially surjective.
It sends double leaves to double leaves and the sets of double leaves give a basis of the space of homomorphisms in both categories (Theorem~\ref{thm:double leaves} and \cite[Theorem~6.12]{MR3555156}).
Hence the functor is fully-faithful.
\end{proof}


\begin{thebibliography}{AMRW19}
\bibitem[Abe19]{arXiv:1904.11350}
Noriyuki Abe, \textit{{A} {H}ecke action on {$G_1T$}-modules}, arXiv:1904.11350. 

\bibitem[AMRW19]{MR3868004}
Pramod~N. Achar, Shotaro Makisumi, Simon Riche, and Geordie Williamson, \textit{Koszul duality for {K}ac--{M}oody groups and characters of tilting modules}, J. Amer. Math. Soc. \textbf{32} (2019), no.~1, 261--310. 

\bibitem[EW16]{MR3555156}
Ben Elias and Geordie Williamson, \textit{Soergel calculus}, Represent. Theory \textbf{20} (2016), 295--374. 

\bibitem[Fie06]{MR2205072}
Peter Fiebig, \textit{The combinatorics of category {$\mathcal O$} over symmetrizable {K}ac-{M}oody algebras}, Transform. Groups \textbf{11} (2006), no.~1, 29--49. 

\bibitem[Fie08a]{MR2395170}
Peter Fiebig, \textit{The combinatorics of {C}oxeter categories}, Trans. Amer. Math. Soc. \textbf{360} (2008), no.~8, 4211--4233. 

\bibitem[Fie08b]{MR2370278}
Peter Fiebig, \textit{Sheaves on moment graphs and a localization of {V}erma flags}, Adv. Math. \textbf{217} (2008), no.~2, 683--712. 

\bibitem[Fie11]{MR2726602}
Peter Fiebig, \textit{Sheaves on affine {S}chubert varieties, modular representations, and {L}usztig's conjecture}, J. Amer. Math. Soc. \textbf{24} (2011), no.~1, 133--181. 

\bibitem[FW14]{MR3330913}
Peter Fiebig and Geordie Williamson, \textit{Parity sheaves, moment graphs and the {$p$}-smooth locus of {S}chubert varieties}, Ann. Inst. Fourier (Grenoble) \textbf{64} (2014), no.~2, 489--536. 

\bibitem[Lib08]{MR2441994}
Nicolas Libedinsky, \textit{Sur la cat\'{e}gorie des bimodules de {S}oergel}, J. Algebra \textbf{320} (2008), no.~7, 2675--2694. 

\bibitem[RW18]{MR3805034}
Simon Riche and Geordie Williamson, \textit{Tilting modules and the {$p$}-canonical basis}, Ast\'{e}risque (2018), no.~397, ix+184. 

\bibitem[Soe90]{MR1029692}
Wolfgang Soergel, \textit{Kategorie {$\mathcal{O}$}, perverse {G}arben und {M}oduln \"uber den {K}oinvarianten zur {W}eylgruppe}, J. Amer. Math. Soc. \textbf{3} (1990), no.~2, 421--445. 

\bibitem[Soe07]{MR2329762}
Wolfgang Soergel, \textit{Kazhdan-{L}usztig-{P}olynome und unzerlegbare {B}imoduln \"uber {P}olynomringen}, J. Inst. Math. Jussieu \textbf{6} (2007), no.~3, 501--525. 

\end{thebibliography}
\end{document}